\pgfplotsset{width=10cm,compat=1.9}
\def\@setauthors{%
  \begingroup
  \def\thanks{\protect\thanks@warning}%
  \trivlist
  \centering\footnotesize \@topsep30\p@\relax
  \advance\@topsep by -\baselineskip
  \item\relax
  \author@andify\authors
  \def\\{\protect\linebreak}

  \normalsize\lowercase{\authors}%
  
	\ifx\@empty\contribs
  \else
    ,\penalty-3 \space \@setcontribs
    \@closetoccontribs
  \fi
  \endtrivlist
  \endgroup
}
\def\@settitle{\begin{center}
\LARGE\lowercase{\@title}
  \end{center}%
}
\definecolor{lightblue}{HTML}{2B77A4}
\definecolor{darkred}{HTML}{9E0D0D}
\numberwithin{equation}{section}
\newtheorem{thm}{Theorem}[section]
\newtheorem{lma}[thm]{Lemma}
\newtheorem{cor}[thm]{Corollary}
\newtheorem{prop}[thm]{Proposition}
\newtheorem{conj}[thm]{Conjecture}
\renewcommand{\epsilon}{\varepsilon}
\newcommand{\eps}{\varepsilon}
\newcommand{\rd}{\mathbb{R}^d}
\renewcommand{\geq}{\geqslant}
\renewcommand{\leq}{\leqslant}
\newcommand{\hd}{\dim_{\textup{H}}}
\newcommand{\fs}{\dim^\theta_{\mathrm{F}}}
\newcommand{\fd}{\dim_{\mathrm{F}}}
\title{$L^p$ averages  of the   Fourier transform  in finite fields}
\author{Jonathan M. Fraser}
\thanks{The  author was  financially supported by a  \emph{Leverhulme Trust Research Project Grant} (RPG-2023-281)  and an \emph{EPSRC Standard Grant} (Fourier analytic techniques in finite fields, EP/Y029550/1).}
\address{University of St Andrews, Scotland}
\begin{document}


\maketitle
\thispagestyle{empty}

\vspace{-5mm}

\begin{abstract}
The  Fourier transform plays a central role in many geometric and combinatorial problems cast in vector spaces over finite fields. If a set admits optimal $L^\infty$ bounds on its Fourier transform (that is, it is a Salem set), then it can often be analysed more easily. However, in many cases obtaining good \emph{uniform} bounds is not possible, even if `most' points admit good pointwise bounds.  Motivated by this, we propose a framework where one systematically studies the  $L^p$ averages of the Fourier transform and keeps track of how good the $L^p$ bounds are as a function of $p$. This captures more nuanced information about a set than, for example, asking whether it is Salem or not. We explore this idea by considering several examples and find that a rich theory emerges.  Further, we provide  various applications of this  approach;  including to sumset type problems, the finite fields distance conjecture, and the problem of counting $k$-simplices inside a given set.  Our typical application is of the form:~if a set admits good  $L^p$ bounds on its Fourier transform, then we are able to make strong geometric conclusions.
\\ \\ 
\emph{Mathematics Subject Classification 2020}. primary:   52C10, 43A25; secondary: 11B30, 52C35,   43A46, 11L40.
\\
\emph{Key words and phrases}:  Finite field, Fourier transform, Salem set, distance set, sumset, Sidon set.
\end{abstract}

\tableofcontents

\section{Introduction}

There is a thriving programme of research which takes problems in harmonic analysis or  geometric measure theory and formulates analogues in vector spaces over finite fields, see e.g.~\cite{bourgain2,dvirda,dvir, iosevich, koh, mocktao,wolff}.  The motivation is partly because  arguments will often simplify and this simplification permits a  greater emphasis  on new ideas. On the other hand, in the finite fields setting new `finite phenomena' often emerge---not seen in the classical Euclidean setting---and this leads to a beautiful interplay between combinatorics, analysis, and geometry. 

 In an influential paper, Iosevich and Rudnev \cite{iosevich} initiated a Fourier analytic approach to a discrete analogue of the Falconer  distance set problem   in vector spaces over  finite fields.  In particular, they introduced the concept of a \emph{Salem set} in a vector space over a finite field and completely resolved the distance set problem for such sets.   Inspired by this, in this paper we propose a framework where one systematically studies the  $L^p$ averages of the Fourier transform (not just the $L^\infty$ `average') and keeps track of how good the $L^p$ bounds are as a function of $p$. This captures richer and more nuanced information about a set than, for example, asking whether it is Salem or not. See \eqref{thetasalem} and \eqref{alphadef} for the key new definitions and  Figure \ref{examples} for a basic demonstration of the sort of additional information we can uncover. We motivate this new framework via several examples and applications and find that a rich theory emerges.

We briefly describe the structure of the paper; please also refer to the contents page above.  We begin by setting up our novel framework in Section \ref{framework}.  As part of this we include a brief comparison with the Euclidean setting  (Section \ref{comparisonss}) and  establish some basic theory in Section  \ref{basictheory}.  In Section \ref{examplessec} we go on to consider several examples.  These examples, in part, serve to highlight the richness of the information our framework captures.  The examples we consider  include sets with a product structure (see Theorem \ref{products} and subsequent applications);  various curves and surfaces described  by polynomial equations (see e.g.~Theorem \ref{sphere0} and Theorem \ref{curves}); and Sidon sets (see Corollary \ref{sidonsalem}).  To help put our framework in a wider context, in Section \ref{gensalemsec} we briefly discuss connections with generalised Salem sets (also from \cite{iosevich}) and in Section \ref{randomsec} we establish that generic sets display good $L^p$ behaviour.  From this point on we turn our attention to applications.  We apply the theory to provide  new results concerning sumsets (see Theorem \ref{sumsets} and Corollary \ref{sumsetgood}); a new approach to estimating certain character sums (see Proposition \ref{charsum} and subsequent examples); and new results concerning the finite fields distance conjecture (see Theorem \ref{distancemain}) and the problem of counting $k$-simplices inside a given set (see Theorem \ref{simplicesmain}).

   In addition to the work presented here,   the   $L^p$ approach proposed in this paper has already been used fruitfully to study exceptional projections in finite fields \cite{firdavsprojections} and the finite fields restriction problem  \cite{firdavsrestriction}.  In the latter paper, we improved on a Stein--Tomas style restriction estimate of Mockenhaupt and Tao \cite{mocktao} by replacing the need for $L^\infty$ control of the Fourier transform by $L^p$ control for suitable $p$. We believe there will be many further applications of this approach in the future. 

\subsection{Background:~finite fields and  Fourier transforms}

 Following \cite{iosevich}, we begin by introducing Fourier analysis in vector spaces over finite fields.    Let $\mathbb{F}_q$ be the finite field of $q$ elements where $q$ is a prime power, and $\mathbb{F}_q^d$ be the $d$-dimensional vector space over $\mathbb{F}_q$ for an integer $d \geq 1$.  The \emph{Fourier transform} of a function $f : \mathbb{F}_q^d \to \mathbb{C}$   is the function $\widehat f :  \mathbb{F}_q^d \to \mathbb{C}$ defined by
\[
\widehat f(x) = q^{-d} \sum_{y \in \mathbb{F}_q^d} \chi(-x \cdot y) f(y)
\]
where $\chi: \mathbb{F}_q \to S^1 \subseteq \mathbb{C}$ is a non-trivial additive character. The specific choice of $\chi$ does not play a significant role in what follows.   We will often use Plancherel's Theorem, e.g. \cite[(2.3)]{iosevich}, which states that
\begin{equation} \label{plancherel}
	\sum_{x \in \mathbb{F}_q^d} |\widehat f(x)|^2  = q^{-d} \sum_{x \in \mathbb{F}_q^d} | f(x)|^2.
\end{equation}
We will use the Fourier transform to study sets $E \subseteq \mathbb{F}_q^d$ and the fundamental tool will be the Fourier transform of $E$ which we denote by $\widehat E$ and  identify with the Fourier transform of the indicator function on $E$, denoted by $\mathbf{1}_{E}$.  In particular, 
\[
\widehat E(x) = q^{-d} \sum_{y \in E} \chi(-x \cdot y) .
\]
In the continuous setting, decay of the Fourier transform of a function or measure at infinity implies smoothness or `randomness' in some quantifiable sense, see \cite{mattila}.  However, in the discrete setting there is no `at infinity' and instead one looks for uniform estimates over $\mathbb{F}_q^d$. 

In order to discuss such uniform estimates for the Fourier transform, it is useful to establish some notation. Throughout the paper we write $A \lesssim B$ to mean there is a constant $c$  independent of $q$ for which $A \leq cB$.  Similarly we write $A \gtrsim B$ to mean $B \lesssim A$ and $A \approx B$ if $A\lesssim B$ and $A \gtrsim B$.  One should always think of $q$ as being very large compared to any implicit constants. We also write $\# E$ to denote the cardinality of a set $E$.  It will also be useful to have notation to convey that $A \lesssim B$ does not hold.  For this we will write $A \gg B$ which, therefore, means that for all $c \geq 1$ there exists   $q$ such that $A > cB$.  We  write $a \wedge b = \min\{a,b\}$ and $a \vee b = \max\{a,b\}$ for real numbers $a$ and $b$. 

When seeking uniform bounds for the Fourier transform of a set $E \subseteq \mathbb{F}_q^d$, one always has the trivial estimate
\begin{equation} \label{trivial}
|\widehat E(x) | \leq q^{-d}   (\# E ) 
\end{equation}
for all $x \in \mathbb{F}_q^d$.  On the other hand,  
by Plancherel's Theorem \eqref{plancherel} and the easy fact that 
 \begin{equation} \label{00}
 |\widehat E(0)| = q^{-d}(\#E),
 \end{equation}
  for all sets $E$, 
\[
q^{-d}\# E = \sum_{x \in \mathbb{F}_q^d} |\widehat E(x)|^2  = q^{-2d}(\# E)^2+\sum_{x \in \mathbb{F}_q^d\setminus \{0\}} |\widehat E(x)|^2.
\]
Therefore, the best one can reasonably hope for is 
 \begin{equation} \label{iosevichassumption}
|\widehat E(x) | \lesssim q^{-d} \sqrt{ \# E}
\end{equation}
for all non-zero $x \in \mathbb{F}_q^d$. This estimate  certainly cannot be improved upon if $\#E = o(q^d)$; see Proposition \ref{alwaysbound} for a quantitative statement.  However, we note that when, for example, $E=  \mathbb{F}_q^d$ is the whole space we get
 \begin{equation} \label{fullset}
|\widehat{\mathbb{F}_q^d}(x) | =0
\end{equation}
for all non-zero $x \in \mathbb{F}_q^d$.  As usual, we write $\mathbb{F}_q^* = \mathbb{F}_q \setminus\{0\}$. A trivial but useful thing to keep in mind is that  $\mathbb{F}_q^d \setminus\{0\} \neq (\mathbb{F}_q^*)^d$.  In \cite{iosevich} sets satisfying \eqref{iosevichassumption} are  called \emph{Salem sets}, that is, a Salem set is a set which admits optimal $L^\infty$ bounds for its Fourier transform. 

The definition of a Salem set above is very natural, but it remains (as far as we know) an interesting open problem to determine whether there exist Salem sets of a  prescribed size.  More precisely, for given $\alpha \in (0,d)$, does there exist $E \subseteq \mathbb{F}_q^d$ satisfying $\# E \approx q^\alpha$ and satisfying \eqref{iosevichassumption}?  Surprisingly this is open outside of some special choices of (integer) $\alpha$.  In fact, \cite[Conjecture 1.5]{randomsalem} conjectures that such sets  do not exist for non-integer $\alpha$.  That said, we do know that, for given $\alpha \in (0,d)$,   there exists $E \subseteq \mathbb{F}_q^d$ satisfying $\# E \approx q^\alpha$ and satisfying
 \begin{equation} \label{iosevichassumption}
|\widehat E(x) | \lesssim q^{-d} \sqrt{ \# E \log q} 
\end{equation}
for all non-zero $x \in \mathbb{F}_q^d$, see \cite[Corollary 1.4]{randomsalem} and \cite{hayes} and also the discussion in Section \ref{randomsec}.  Following \cite{randomsalem}, we refer to such sets as \emph{weak Salem} sets and these will be useful for constructing certain examples.  Although it is not the main purpose of this paper, another benefit of our $L^p$ framework is that it provides a quantitative description of sets with good Fourier decay but which fail to be Salem in the strict sense.  In particular, this includes weak Salem sets.

\section{An $L^p$   approach to quantifying Fourier decay} \label{framework}

  The main idea of this paper is to relax the Salem condition by  considering suitable $L^p$ averages of $|\widehat E(x) |$---rather than uniform estimates---and to keep track of  precisely how good the $L^p$ average behaves as a function of $p$.  This will provide richer information about the set $E$. More precisely, for $p \in [1, \infty]$ and $s \in [0,1]$, we  say $E$ is \emph{$(p,s)$-Salem} if
  \begin{equation} \label{thetasalem}
\| \widehat E \|_p \lesssim q^{-d}(\#E)^{1-s}.
 \end{equation}
 Here
  \begin{equation} \label{lpnorm}
 \| \widehat E \|_p =  \Bigg(\frac{1}{q^{d}} \sum_{x \in \mathbb{F}_q^d\setminus\{0\} }  |\widehat E(x)|^{p} \Bigg)^{1/p}
 \end{equation}
for $p \in [1, \infty)$ with the convention that 
 \[
  \| \widehat E \|_\infty = \sup_{x \in \mathbb{F}_q^d \setminus\{0\}}  |\widehat E(x)|.
 \]
We will be interested in finding the range of $s$ such that a given set is $(p,s)$-Salem.  It is clear that the range of possible $s$ is an interval and so there is particular interest in
  \begin{equation} \label{alphadef}
\alpha_E(p) := \sup\{ s :E \text{ is  $(p,s)$-Salem}\}
 \end{equation}
as a function of $p$.  It is important to remove 0 when defining the $L^p$ norm in \eqref{lpnorm}.  By virtue of the simple identity  \eqref{00}, the inclusion of 0 would lead to unnecessary restrictions on the range of possible $s$ and so we obtain strictly more Fourier analytic information by removing it.   

With these definitions in place we immediately see that being Salem and being $(\infty ,1/2)$-Salem are equivalent and that weak Salem sets are $(\infty, s)$-Salem for all $s<1/2$.  It is also trivial to see that all sets are $(p,0)$-Salem for all $p \in [1, \infty]$ (we will, of course, improve upon this observation later by  interpolation, see Corollary \ref{p1/p}).  Moreover, using Plancherel's Theorem \eqref{plancherel},
 \[
  \| \widehat E \|_2 \lesssim  \Bigg(\frac{1}{q^{d}} \sum_{x \in \mathbb{F}_q^d }  |\widehat E(x)|^{2} \Bigg)^{1/2} \approx q^{-d} \sqrt{\# E}  
 \] 
 and so all sets are   $(2,1/2)$-Salem.   Also,  if $E \subseteq \mathbb{F}_q^d$ is $(p_0,s)$-Salem, then it is $(p,s)$-Salem for all $1 \leq p \leq p_0$.  This is a simple consequence of fact that $\| \widehat E \|_p $ is increasing in $p$.  These last two observations combine to tell us that the important  regime to consider is $p>2$.

The goal of this paper is to motivate the study of the function $p \mapsto \alpha_E(p)$.  We will do this by considering numerous examples, exploring general properties of the function, and using the information provided by the function to prove new results in several geometric and combinatorial problems concerning sets $E \subseteq \mathbb{F}_q^d$.

\begin{figure}[H] 
	\centering
	\includegraphics[width=\textwidth]{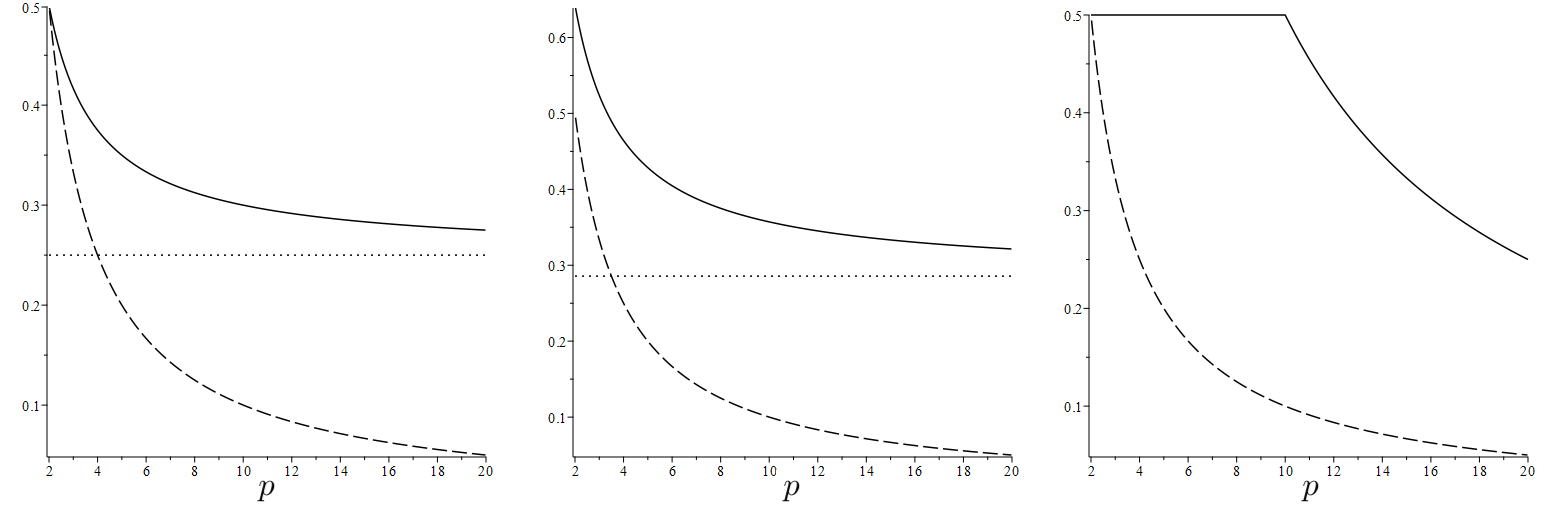}
\caption{Three examples.  Left: the sphere of radius zero in $\mathbb{F}_q^3$ (see Theorem \ref{sphere0}).  Centre: the complement of $\mathbb{F}_q^5$ inside $\mathbb{F}_q^7$  (see Proposition  \ref{smallcomplement}).  Right: a curve in $\mathbb{F}_q^6$ described by 5 linearly independent polynomials (see Theorem \ref{curves}). In each case we plot $\alpha_E(p)$ as a solid curve, the general lower bound from Corollary \ref{p1/p} as a dashed curve, and (in the two cases where $\alpha_E(\infty)>0$) a horizontal asymptote as a dotted line.}\label{examples}
\end{figure}

 \subsection{Comparison with the Euclidean theory} \label{comparisonss}

The new approach in this paper was inspired  by the \emph{Fourier spectrum}, which is a recently introduced concept in Euclidean geometric measure theory that interpolates continuously between the Hausdorff and Fourier dimensions of a (Borel) set.  We refer the reader to \cite{fourierspectrum} and the references therein for more background.  Here we simply record the following analogies:\\

\begin{center}
\begin{tabular}{ |c|c|}  
 \hline
 $E \subseteq \mathbb{F}_q^d$ with $\# E \approx q^\beta$ &  $E \subseteq \mathbb{R}^d$ with Hausdorff dimension $\hd E = \beta$ \\ 
$2 \beta \cdot \alpha_E(\infty) $  & Fourier dimension $\fd E$   \\ 
$ \theta \mapsto 2 \beta \cdot \alpha_E(2/\theta)$ for $\theta \in [0,1]$ & Fourier spectrum $\theta \mapsto \fs E$  for $\theta \in [0,1]$  \\ 
Salem set $E \subseteq \mathbb{F}_q^d$ satisfying \eqref{iosevichassumption} & Salem set $E \subseteq \mathbb{R}^d$ satisfying $\hd E = \fd E$ \\ 
 \hline
\end{tabular}
\end{center}

\vspace{4mm}

There are many important differences between the theory in finite fields and the Euclidean theory.  For example, in the finite fields setting, one is never concerned with measurability or integrability questions, but also cannot rely on throwing away sets of measure zero or using functions with very rapidly decaying Fourier transform (e.g.~Schwartz functions). Moreover, many interesting and counter-intuitive problems arise in the finite setting such as the existence of non-trivial spheres of radius zero (see Section \ref{sphereradiuszero}) and spheres which contain non-trivial affine planes (see Section \ref{gensalemsec}).  Another notable distinction between the finite fields and Euclidean settings is that  when one calls a set $E \subseteq \mathbb{R}^d$ a `Salem set', one genuinely means that this set is intrinsically a Salem set in its own right (relative to its fixed ambient space $\rd$).  This is not the same in the finite fields case, because it is not meaningful to talk about an isolated set $E \subseteq \mathbb{F}_q^d$ since everything is finite.  Instead, we must allow $q$ to vary and we consider the set $E$ as a family of sets parametrised by $q$.  Then, when we say that $E$ is Salem, what we really mean is that the family $\{E\}_q$ is Salem.  As is typical in the literature,  we will suppress this subtly in what follows.
 
 \subsection{First observations} \label{basictheory}
 
In this short subsection we provide some basic but useful facts concerning the $L^p$ averages framework and the function $p \mapsto \alpha_E(p)$.

One reason we expect this approach to provide better results in certain situations is the following concavity estimate. It should be interpreted as showing that  one can never do worse by considering intermediate $p$, but if `strict concavity' is observed then one might do better in the context of a particular application. In particular, the following result says that $p \mapsto p \alpha_E(p)$ is concave. 
 
 \begin{prop} \label{conc}
 If $E \subseteq \mathbb{F}_q^d$ is $(p_0,s_0)$-Salem and $(p_1,s_1)$-Salem for some $1 \leq p_0<p_1 < \infty$, then it is $(p,s)$-Salem for  $p \in [p_0, p_1]$ and 
 \[
 s = s_0 \frac{p_0(p_1-p)}{p(p_1-p_0)} +  s_1 \frac{p_1(p-p_0)}{p(p_1-p_0)}.
 \]
In particular,
\[
\alpha_E(p) \geq 1/p+\alpha_E(\infty)(1-2/p).
\]
 \end{prop}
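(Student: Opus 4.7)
The plan is to derive this directly from the log-convexity of $L^p$ norms (equivalently, Hölder's inequality) applied to $\widehat E$ with respect to the normalised counting measure on $\mathbb{F}_q^d \setminus \{0\}$ appearing in \eqref{lpnorm}. For any $1 \leq p_0 < p < p_1 < \infty$, set
\[
a = \frac{p_0(p_1 - p)}{p(p_1 - p_0)}, \qquad b = \frac{p_1(p - p_0)}{p(p_1 - p_0)}.
\]
A short direct computation shows that $a + b = 1$ and $ap/p_0 + bp/p_1 = 1$, so that $p_0/(ap)$ and $p_1/(bp)$ are conjugate exponents.

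The main step is then to apply Hölder's inequality with these conjugate exponents to the factorisation $|\widehat E|^p = |\widehat E|^{ap}\cdot|\widehat E|^{bp}$ on $\mathbb{F}_q^d \setminus \{0\}$. This gives
\[
\sum_{x \neq 0}|\widehat E(x)|^{p} \leq \Bigl(\sum_{x\neq 0} |\widehat E(x)|^{p_0}\Bigr)^{ap/p_0}\Bigl(\sum_{x\neq 0} |\widehat E(x)|^{p_1}\Bigr)^{bp/p_1}.
\]
Taking $p$-th roots and dividing through by the appropriate power of $q^d$, the identity $a/p_0 + b/p_1 = 1/p$ causes the normalising factors in \eqref{lpnorm} to combine perfectly, yielding the log-convexity bound
\[
\| \widehat E \|_p \leq \| \widehat E \|_{p_0}^{a}\, \| \widehat E \|_{p_1}^{b}.
\]
Substituting the hypothesised bounds $\| \widehat E \|_{p_i} \lesssim q^{-d}(\#E)^{1-s_i}$ and using $a+b=1$ produces
\[
\| \widehat E \|_p \lesssim q^{-d}(\#E)^{(1-s_0)a + (1-s_1)b} = q^{-d}(\#E)^{1-(s_0 a + s_1 b)},
\]
which is exactly the $(p,s)$-Salem bound with $s = s_0 a + s_1 b$ matching the formula in the statement.

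For the ``in particular'' claim, one applies the main assertion with $p_0 = 2$, $s_0 = 1/2$ (justified for every $E$ by Plancherel, as already observed after \eqref{alphadef}), and with $s_1 = \alpha_E(\infty) - \epsilon$ for an arbitrary $\epsilon > 0$ (which is legitimate by definition of $\alpha_E(\infty)$ as a supremum). With $p_0 = 2$ the coefficients become $a = 2(p_1-p)/(p(p_1-2))$ and $b = p_1(p-2)/(p(p_1-2))$, which tend to $2/p$ and $1 - 2/p$ respectively as $p_1 \to \infty$. Letting $p_1 \to \infty$ and then $\epsilon \to 0^+$ yields $\alpha_E(p) \geq 1/p + \alpha_E(\infty)(1-2/p)$.

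There is no serious obstacle here: the argument is essentially bookkeeping around a standard interpolation inequality. The only point that requires care is tracking the $q^{-d}$ normalisation in \eqref{lpnorm} through Hölder's inequality, and the passage to $p_1 \to \infty$ in the final step, where one must use $\epsilon$-approximation because $\alpha_E(\infty)$ is defined as a supremum and need not itself be attained.
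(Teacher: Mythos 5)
Your proposal is correct and follows essentially the same route as the paper: the factorisation $|\widehat E|^p=|\widehat E|^{ap}|\widehat E|^{bp}$ with your conjugate exponents $p_0/(ap)$ and $p_1/(bp)$ is exactly the paper's Hölder step (its exponents $\tfrac{p_1-p_0}{p_1-p}$ and $\tfrac{p_1-p_0}{p-p_0}$ coincide with yours), and the passage $p_0=2$, $p_1\to\infty$ for the final claim is likewise the paper's argument, with your $\epsilon$-approximation merely making explicit a detail the paper leaves implicit.
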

 
 \begin{proof}
Decomposing $|\widehat E(\cdot)|^p$ according to $p_0, p_1$, we obtain
 \begin{align*}
\| \widehat E \|_p  &=    \Bigg(\frac{1}{q^{d}} \sum_{x \in \mathbb{F}_q^d\setminus\{0\} }  \left(|\widehat E(x)|^{p_0}\right)^{ \frac{p_1-p}{p_1-p_0} } \left(|\widehat E(x)|^{p_1}\right)^{  \frac{p -p_0}{p_1-p_0} }  \Bigg)^{1/p}\\
  &\leq   \Bigg(\frac{1}{q^{d}} \sum_{x \in \mathbb{F}_q^d\setminus\{0\} }  |\widehat E(x)|^{p_0}\Bigg)^{ \frac{p_1-p}{p(p_1-p_0)} }  
   \Bigg(\frac{1}{q^{d}} \sum_{x \in \mathbb{F}_q^d\setminus\{0\} }  |\widehat E(x)|^{p_1}\Bigg)^{  \frac{p -p_0}{p(p_1-p_0)} }\\
    &\, \hspace{5cm}  \qquad \text{(by H\"{o}lder's inequality)}\\
        &=  \Big( \| \widehat E \|_{p_0}\Big)^{ \frac{p_0(p_1-p)}{p(p_1-p_0)} }  
   \Big( \| \widehat E \|_{p_1}\Big)^{  \frac{p_1(p -p_0)}{p(p_1-p_0)} } \\
  &\lesssim q^{-d}(\#E)^{1-s}
  \end{align*}
 by assumption and definition of $s$. This proves the first claim.  The final claim follows by setting $p_0=2$ and letting $p_1 \to \infty$.
 \end{proof}

An immediate consequence of Proposition \ref{conc} is a useful lower bound  $\alpha_E(p) \geq 1/p$ which holds for all sets $E$ and all $p \geq 2$. 

\begin{cor} \label{p1/p}
All sets $E \subseteq \mathbb{F}_q^d$ are $(p,1/p)$-Salem for all $p \in [2, \infty]$.
\end{cor}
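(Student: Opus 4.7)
The plan is to read off this corollary directly from the concavity bound just established in Proposition \ref{conc}. The key point is that every set automatically satisfies two baseline bounds: by Plancherel's identity \eqref{plancherel} every $E$ is $(2,1/2)$-Salem, and by the trivial bound \eqref{trivial} every $E$ is $(\infty,0)$-Salem (indeed every set is $(p,0)$-Salem for any $p$).

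So I would simply apply Proposition \ref{conc} with $p_0 = 2$, $s_0 = 1/2$, a large value $p_1$, and $s_1 = 0$, which interpolates between these two endpoints. The resulting exponent is
\[
s = \tfrac12 \cdot \frac{2(p_1-p)}{p(p_1-2)} + 0 = \frac{p_1-p}{p(p_1-2)},
\]
and letting $p_1 \to \infty$ gives $s \to 1/p$, establishing $(p,1/p)$-Salem for every $p \in [2,\infty)$. The case $p = \infty$ reduces to $(\infty, 0)$-Salem, which is the trivial bound \eqref{trivial}. Alternatively, and even more compactly, one can invoke the final inequality of Proposition \ref{conc}, namely $\alpha_E(p) \geq 1/p + \alpha_E(\infty)(1-2/p)$, and observe that $\alpha_E(\infty) \geq 0$ and $1 - 2/p \geq 0$ for $p \geq 2$, so the second term is non-negative and the bound $\alpha_E(p) \geq 1/p$ drops out immediately.

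There is no real obstacle here; the corollary is essentially a repackaging of the $p_0 = 2$, $p_1 \to \infty$ endpoint of the H\"older interpolation already carried out in Proposition \ref{conc}. The only care needed is the limiting argument $p_1 \to \infty$ (or equivalently, verifying that the inequality from the ``In particular'' clause is valid up to and including $p = \infty$), which is routine since the inequality $\| \widehat E \|_p \lesssim q^{-d}(\#E)^{1-1/p}$ is uniform in $p_1$ and the implied constants do not degenerate.
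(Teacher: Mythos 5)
Your proposal is correct and matches the paper's reasoning: the corollary is stated as an immediate consequence of Proposition \ref{conc}, using exactly the $(2,1/2)$ and $(\infty,0)$ baselines you identify, and your invocation of the ``In particular'' inequality $\alpha_E(p) \geq 1/p + \alpha_E(\infty)(1-2/p)$ together with $\alpha_E(\infty)\geq 0$ is precisely how the paper deduces it.
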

 
In the other direction, there is a limit to how good the estimates for intermediate $p$ can be given the estimates for $p=\infty$.  This is the content of the following result, which also serves to establish `continuity' of $\alpha_E(p)$ at $p=\infty$.

  \begin{prop}
 If $E \subseteq \mathbb{F}_q^d$ satisfies 
  \[
  \| \widehat E \|_\infty \gg q^{-d}(\#E)^{1-s},
 \]
 then
 \[
   \| \widehat E \|_p \gg q^{-d}(\#E)^{1-s-\frac{d\log q}{p \log \#E}}
 \]
 for all $p \in [1, \infty)$.   In particular, $\alpha_E(p) \to \alpha_E(\infty)$ as $p \to \infty$. Moreover, if $\# E \approx q^\beta$, then 
\[
\alpha_E(p) \leq \alpha_E(\infty) + \frac{d}{p\beta}.
\]
 \end{prop}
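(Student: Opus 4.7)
\medskip

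The plan is to reduce everything to the elementary containment $\|\widehat E\|_\infty \leq q^{d/p}\|\widehat E\|_p$, which is the discrete counterpart of the standard $L^\infty \hookrightarrow L^p$ estimate on a finite measure space. This comes for free: picking any $x_0 \neq 0$ achieving (or nearly achieving) the supremum that defines $\|\widehat E\|_\infty$, one has
\[
|\widehat E(x_0)|^p \;\leq\; \sum_{x \in \mathbb{F}_q^d\setminus\{0\}} |\widehat E(x)|^p \;=\; q^d \,\|\widehat E\|_p^p,
\]
so taking $p$th roots and then the supremum gives $\|\widehat E\|_\infty \leq q^{d/p}\|\widehat E\|_p$, equivalently $\|\widehat E\|_p \geq q^{-d/p}\|\widehat E\|_\infty$.

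Feeding the hypothesis $\|\widehat E\|_\infty \gg q^{-d}(\#E)^{1-s}$ into this lower bound yields $\|\widehat E\|_p \gg q^{-d/p-d}(\#E)^{1-s}$. The cosmetic step is to absorb the factor $q^{-d/p}$ into the exponent of $\#E$ via the identity
\[
q^{-d/p} \;=\; (\#E)^{-\frac{d\log q}{p\log \#E}},
\]
which gives the first stated bound. The only care needed is with the $\gg$ symbol: the hypothesis furnishes a sequence $q\to\infty$ along which $\|\widehat E\|_\infty \geq c q^{-d}(\#E)^{1-s}$ for arbitrary $c$, and the chain of inequalities above holds along that same sequence with the same constant, so $\gg$ is preserved.

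For the consequences about $\alpha_E$, the lower bound $\alpha_E(p)\geq \alpha_E(\infty)$ is automatic from $\|\widehat E\|_p \leq \|\widehat E\|_\infty$ (or indeed from Proposition \ref{conc}), so I only need a matching upper bound. Fix $s > \alpha_E(\infty)$; then $E$ fails to be $(\infty,s)$-Salem, i.e.\ $\|\widehat E\|_\infty \gg q^{-d}(\#E)^{1-s}$. Applying the first part of the proposition and contraposing the definition \eqref{thetasalem} shows $E$ is not $(p,\,s+\tfrac{d\log q}{p\log \#E})$-Salem. Under the assumption $\#E \approx q^\beta$ the correction reduces to $\tfrac{d}{p\beta}$ (since $\log \#E = \beta\log q + O(1)$ is absorbed into $\lesssim$/$\gg$), so $\alpha_E(p) \leq s + \tfrac{d}{p\beta}$, and letting $s \downarrow \alpha_E(\infty)$ delivers the stated inequality. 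The continuity claim $\alpha_E(p)\to \alpha_E(\infty)$ then drops out by sending $p\to\infty$.

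The only mildly subtle point — the main obstacle, such as it is — is the bookkeeping between the $\lesssim$/$\gg$ asymptotics (which hide constants independent of $q$ and implicitly involve letting $q\to\infty$) and the exponent $\frac{d\log q}{p\log \#E}$ (which does depend on $q$). The clean way to handle this is exactly as above: leave the $q$-dependent exponent in place for the first inequality, and only collapse it to $\frac{d}{p\beta}$ once the assumption $\#E\approx q^\beta$ is invoked, so that $\log\#E/\log q \to \beta$ controls the correction uniformly in the relevant sequence of $q$.
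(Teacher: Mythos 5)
Your proof is correct and takes essentially the same route as the paper: both keep a single (extremal) term in the sum defining $\|\widehat E\|_p$ to get $\|\widehat E\|_p \geq q^{-d/p}\|\widehat E\|_\infty$, rewrite $q^{-d/p}$ as $(\#E)^{-\frac{d\log q}{p\log\#E}}$, and then apply the contrapositive to deduce the bounds on $\alpha_E(p)$. The one cosmetic difference is that the paper establishes $\alpha_E(p)\to\alpha_E(\infty)$ via a separate upper/lower semi-continuity argument before invoking $\#E\approx q^\beta$, whereas you derive continuity as a corollary of the final inequality and hence under the $\#E\approx q^\beta$ hypothesis; this is a negligible rearrangement since the correction exponent $\frac{d\log q}{p\log\#E}$ needs comparable control either way.
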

 \begin{proof}
 We have
 \begin{align*}
  \| \widehat E \|_p =  \Bigg(\frac{1}{q^{d}} \sum_{x \in \mathbb{F}_q^d\setminus\{0\} }  |\widehat E(x)|^{p} \Bigg)^{1/p} 
  &\geq \Bigg(\frac{1}{q^{d}} \|\widehat E \|_\infty^{p} \Bigg)^{1/p} \\
    &\gg \Bigg(\frac{1}{q^{d}} q^{-dp}(\#E)^{p(1-s)} \Bigg)^{1/p} \\
    &=  q^{-d}(\#E)^{1-s-\frac{d \log q}{p \log \#E}}
  \end{align*}
  as required. In particular, applying the contrapositive, if $E$ is $(p, s_p)$-Salem for all large  $p$ and $s_p \to s_\infty$ as $p \to \infty$, then $E$  is  $(\infty , s )$-Salem for all $0 \leq s < s_\infty$. Therefore  $\alpha_E(p)$ is upper semi-continuous  at $p=\infty$.     Lower semi-continuity is immediate since $   \| \widehat E \|_p \leq \| \widehat E \|_\infty$ for all $ p \geq 2$ and so $\alpha_E(p)$ is continuous at  $p=\infty$, that is,    $\alpha_E(p) \to \alpha_E(\infty)$ as $p \to \infty$.  The final estimate for $\alpha_E(p) $ follows similarly by applying the contrapositive.
 \end{proof}
 
 We close this introductory section with a simple but useful estimate which gives an explicit lower bound for the $L^p$ norms.
 
\begin{prop} \label{alwaysbound}
  If $E \subseteq\mathbb{F}_{q}^d$ satisfies  $\#E \leq c q^d$ for some $c \in (0,1)$, then for all $p \in [2,\infty]$
  \[
  \| \widehat E \|_p  \geq \sqrt{1- c} \,  q^{- d} \sqrt{\#E } .
 \]
In particular, for such $E$, $\alpha_E(p) \leq 1/2$ for all $p \in [2,\infty]$.
 \end{prop}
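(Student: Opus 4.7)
The plan is to establish the lower bound first at $p=2$ via Plancherel's theorem and then to propagate it to all $p \in [2,\infty]$ using monotonicity of the map $p \mapsto \|\widehat{E}\|_p$. The key observation is that when $\#E \leq cq^d$, the zero-frequency contribution $|\widehat{E}(0)|^2 = q^{-2d}(\#E)^2$ accounts for only a bounded fraction of the total $L^2$-mass prescribed by Plancherel, so a definite proportion of that mass must survive on the non-zero frequencies.

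Concretely, separating the $x=0$ term in \eqref{plancherel} and using \eqref{00} yields
\[
q^{-d}\#E \;=\; q^{-2d}(\#E)^2 + \sum_{x \in \mathbb{F}_q^d \setminus \{0\}} |\widehat{E}(x)|^2,
\]
so that
\[
\|\widehat{E}\|_2^2 \;=\; q^{-d}\sum_{x \neq 0} |\widehat{E}(x)|^2 \;=\; q^{-2d}\#E\bigl(1-q^{-d}\#E\bigr) \;\geq\; (1-c)\,q^{-2d}\#E.
\]
Taking square roots gives the desired bound at $p = 2$.

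For $p > 2$, I would invoke the monotonicity of $\|\widehat{E}\|_p$ in $p$ already noted after Corollary \ref{p1/p}. The measure on $\mathbb{F}_q^d \setminus \{0\}$ giving each point weight $q^{-d}$ has total mass $(q^d - 1)q^{-d} < 1$, and a short H\"older argument shows that $L^p$-norms with respect to any finite measure of mass at most $1$ are non-decreasing in $p$. Hence $\|\widehat{E}\|_p \geq \|\widehat{E}\|_2 \geq \sqrt{1-c}\,q^{-d}\sqrt{\#E}$ for every $p \in [2,\infty]$. The final claim $\alpha_E(p) \leq 1/2$ follows directly by comparing this lower bound with the defining inequality \eqref{thetasalem}: any $(p,s)$-Salem estimate combined with the lower bound forces $\sqrt{1-c}\,(\#E)^{1/2} \lesssim (\#E)^{1-s}$, which---for families with $\#E$ unbounded in $q$---is only possible when $s \leq 1/2$. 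There is no real obstacle here; the whole argument amounts to a single use of Plancherel plus a little bookkeeping about the normalisation used in \eqref{lpnorm}.
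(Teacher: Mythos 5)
Your proof is correct and matches the paper's argument essentially exactly: compute $\|\widehat{E}\|_2$ via Plancherel and the identity $|\widehat{E}(0)| = q^{-d}\#E$, then extend to $p \geq 2$ via monotonicity of the $L^p$ norm (which holds because the normalised counting measure on $\mathbb{F}_q^d\setminus\{0\}$ has total mass less than $1$). Your added remark spelling out why the lower bound forces $\alpha_E(p)\leq 1/2$ is a small but correct elaboration that the paper leaves implicit.
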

 
 \begin{proof} 
 By   \eqref{00} and Plancherel's Theorem \eqref{plancherel} 
 \begin{align*}
 \| \widehat E \|_p^2 \geq  \| \widehat E \|_2^2 &= q^{-d} \sum_{x \in \mathbb{F}_q^d }  |\widehat E(x)|^{2} - q^{-d} |\widehat E (0)|^2 \\
 &=  q^{-2d} (\#E ) - q^{-3d} (\#E )^2 \\
  &=  q^{-2d} (\#E )\big(1- q^{-d} (\#E ) \big) 
 \end{align*}
 and so
 \[
  \| \widehat E \|_p  \geq q^{- d} \sqrt{\#E } \sqrt{1- q^{-d} (\#E )}\geq q^{- d} \sqrt{\#E } \sqrt{1- c}
 \]
which proves the result.
\end{proof}

 \section{Examples} \label{examplessec}

In this section we consider several examples, in part to demonstrate the richness of the theory we are developing, as well as  the varied information which the $L^p$ framework can reveal about a given set.  Some of these examples were already exhibited in Figure \ref{examples}.

In \cite{iosevich} several examples were discussed. In particular, it was shown that the sphere 
\[
S_r^{d-1}= \{ (y_1, \dots, y_{d})  \in \mathbb{F}_{q}^d : y_1^2+\cdots + y_{d}^2 = r\}
\]
is a Salem set of cardinality $\#S_r^{d-1} \approx q^{d-1}$ for $r \in \mathbb{F}^*_q$.  For our purposes, it is more interesting to focus on sets which are \emph{not} Salem.  For such sets, we are interested in how much additional Fourier analytic information can be extracted from studying $L^p$ averages.  That is, we are interested in examples where   $\alpha_E(\infty) < 1/2$ and   $\alpha_E(p)$ exhibits exotic behaviour, for example, beating the estimate  $\alpha_E(p) \geq 1/p+\alpha_E(\infty)(1-2/p)$ coming from Proposition \ref{conc}.

\subsection{The sphere of radius zero} \label{sphereradiuszero}

The sphere of radius zero
\[
S_0^{d-1}= \{ (y_1, \dots, y_{d})  \in \mathbb{F}_{q}^d : y_1^2+\cdots + y_{d}^2 = 0\}
\]
is an interesting object in a vector space over a finite field.  Unlike in the Euclidean case, it usually is a non-trivial set.  In fact, if $d \geq 3$ or if $-1$ is a square in $\mathbb{F}_{q}$, then
\[
\# S_0^{d-1} \approx q^{d-1}
\]
and if $-1$ is not a square in $\mathbb{F}_{q}$, then it is easy to see that $S_0^1 = \{0\}$. See \cite[Proposition 3.1.5]{covert} for more precise calculations.  Also, unlike spheres of non-zero radius, $S_0^{d-1}$ is \emph{not} a Salem set.  However, we prove that (for $d\geq 3$) $S_0^{d-1}$  does have good Fourier analytic behaviour and that this is captured by our $L^p$ approach.

\begin{thm} \label{sphere0}
Suppose that either  $d \geq 3$ or that $d=2$ and $-1$ is a square in $\mathbb{F}_{q}$.  Then  the sphere of radius zero 	$S_0^{d-1} $ is $(p,s)$-Salem if and only if 
	\[
	s \leq \frac{d-2}{2(d-1)}+ \frac{1}{p(d-1)}.
	\]
\end{thm}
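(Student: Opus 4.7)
The plan is to obtain an explicit formula for $\widehat{S_0^{d-1}}(x)$ via the standard Gauss sum machinery and then read off $\|\widehat{S_0^{d-1}}\|_p$ up to multiplicative constants (I tacitly assume $q$ is odd so that $2$ is invertible and the quadratic form is non-degenerate).

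First I would write $\mathbf{1}_{y \cdot y = 0} = q^{-1}\sum_{t \in \mathbb{F}_q}\chi(t (y \cdot y))$ and swap the order of summation to obtain
\[
\widehat{S_0^{d-1}}(x) = q^{-d-1}\sum_{t \in \mathbb{F}_q}\sum_{y \in \mathbb{F}_q^d}\chi(t y \cdot y - x \cdot y).
\]
For $x \neq 0$ the $t=0$ term vanishes. For $t \neq 0$ I complete the square in each coordinate, pull out a factor $\chi(-\|x\|^2/(4t))$, and reduce the inner sum to $g(t)^d$, where $g(t) = \sum_u \chi(t u^2) = \eta(t) G$, with $\eta$ the quadratic character, $\|x\|^2 := x \cdot x$, and $G$ the classical quadratic Gauss sum satisfying $|G| = \sqrt{q}$. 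This yields, for $x \neq 0$,
\[
\widehat{S_0^{d-1}}(x) = q^{-d-1} G^d \sum_{t \neq 0}\chi\!\left(-\|x\|^2/(4t)\right)\eta(t)^d.
\]

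Next I would evaluate this one-dimensional character sum as a function of $\|x\|^2$, splitting into the case $x \in S_0^{d-1}$ (so $\|x\|^2 = 0$ and the character trivialises, reducing the sum to $\sum_{t \neq 0}\eta(t)^d$) and the complementary case $\|x\|^2 \neq 0$, where the substitution $u = -\|x\|^2/(4t)$ (legitimate exactly because $\|x\|^2 \neq 0$) turns it into a standard character sum, producing either $\sum_{u \neq 0}\chi(u) = -1$ or a second Gauss sum of the form $\eta(-\|x\|^2)G$, according to the parity of $d$. The upshot is a two-level pointwise structure: $|\widehat{S_0^{d-1}}(x)| \approx q^{-d/2}$ on $S_0^{d-1}\setminus\{0\}$, a set of cardinality $\approx q^{d-1}$, and $|\widehat{S_0^{d-1}}(x)| \lesssim q^{-d/2 - 1}$ on its complement, of cardinality $\approx q^d$.

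With these pointwise values in hand, the $L^p$ bookkeeping is routine: the on-sphere contribution to $\|\widehat{S_0^{d-1}}\|_p^p$ is $\approx q^{-d} \cdot q^{d-1} \cdot q^{-dp/2} = q^{-1 - dp/2}$ while the off-sphere contribution is $\approx q^{-p(d/2 + 1)}$; a quick comparison shows the former dominates for every $p \geq 1$, yielding $\|\widehat{S_0^{d-1}}\|_p \approx q^{-1/p - d/2}$. Since $\#S_0^{d-1} \approx q^{d-1}$, the defining inequality $\|\widehat{S_0^{d-1}}\|_p \lesssim q^{-d}(\#S_0^{d-1})^{1-s} = q^{-1-s(d-1)}$ is then equivalent to $s(d-1) \leq (d-2)/2 + 1/p$, which rearranges to the claimed threshold. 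Because the estimate for $\|\widehat{S_0^{d-1}}\|_p$ is matched above and below (the lower bound being supplied precisely by the on-sphere mass), this delivers the "if and only if". The main obstacle is the clean evaluation of the character sum in $t$ once parity enters, together with the case split at $\|x\|^2 = 0$: it is exactly the concentration of extra Fourier mass on the sphere itself that spoils Salem behaviour and produces the non-trivial $p$-dependent improvement captured by the $L^p$ framework.
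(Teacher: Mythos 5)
Your approach is essentially the paper's: derive the Gauss-sum expression for $\widehat{S_0^{d-1}}$, read off a two-level pointwise structure over the sets $\{|x|^2=0\}$ and $\{|x|^2\neq 0\}$, and then do the $L^p$ bookkeeping. The paper simply quotes the formula from Covert's book rather than rederiving it, but then performs exactly the same on-sphere/off-sphere split.

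There is, however, a gap in your argument that your own (correct and more careful) derivation actually exposes. You obtain
\[
\widehat{S_0^{d-1}}(x)=q^{-d-1}G^d\sum_{t\neq 0}\eta(t)^d\chi\!\left(-\|x\|^2/(4t)\right),
\]
and then assert as the ``upshot'' that $|\widehat{S_0^{d-1}}(x)|\approx q^{-d/2}$ on $S_0^{d-1}\setminus\{0\}$ and $\lesssim q^{-d/2-1}$ off it. But this is only what happens when $d$ is even, in which case $\eta(t)^d\equiv 1$. When $d$ is odd, $\eta(t)^d=\eta(t)$, so on the sphere ($\|x\|^2=0$) the inner sum is $\sum_{t\neq 0}\eta(t)=0$, i.e. $\widehat{S_0^{d-1}}$ \emph{vanishes} identically on $S_0^{d-1}\setminus\{0\}$; and off the sphere the substitution you describe produces a nontrivial Gauss sum, giving $|\widehat{S_0^{d-1}}(x)|\approx q^{-(d+1)/2}$ rather than $q^{-d/2-1}$. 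Thus for odd $d$ the ``on-sphere mass'' that you rely on to supply the matching lower bound (and hence the ``only if'') is zero, and the actual pointwise profile is the flat one of a Salem set: $\|\widehat{S_0^{d-1}}\|_p\approx q^{-(d+1)/2}$ for all $p$, i.e. $\alpha_{S_0^{d-1}}(p)\equiv 1/2$. (A direct check for $d=3$, $q=5$ confirms $\widehat{S_0^{2}}(z)=0$ for $z\in S_0^2\setminus\{0\}$ and $|\widehat{S_0^2}(z)|=q^{-2}$ otherwise.) Note that the paper's own proof has the same issue: the formula it attributes to Covert omits the $\eta(t)^d$ factor and so is valid only for even $d$, yet the theorem is stated for all $d\geq 3$. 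Since you did carry the $\eta(t)^d$ factor correctly, you should have noticed that the case split by parity changes the conclusion, and either restricted to even $d$ or observed that the stated ``if and only if'' formula cannot hold uniformly in $d$.
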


\begin{proof}
Writing $z = (z_1, \dots, z_d) \in \mathbb{F}_q^d\setminus \{0\}$, from \cite[Proposition 3.1.6]{covert}
	\[
|	\widehat{S_0^{d-1} }(z) | = q^{-\frac{d+2}{2}} \left\lvert \sum_{x \in \mathbb{F}_q^*} \chi\Big(-\frac{z_1^2+\cdots + z_{d}^2}{4x}\Big) \right\rvert.
	\]
	Therefore,
	\begin{align*}
		\| \widehat{S_0^{d-1} }\|_p &= q^{-d/p-d/2-1} \left( \sum_{z \in \mathbb{F}_q^d\setminus\{0\}}\left\lvert \sum_{x \in \mathbb{F}_q^*} \chi\Big(-\frac{z_1^2+\cdots + z_{d}^2}{4x}\Big) \right\rvert^p\right)^{1/p} \\
		&= q^{-d/p-d/2-1} \left( \sum_{t \in \mathbb{F}_q^*} \sum_{z \in S_t^{d-1}}\left\lvert \sum_{x \in \mathbb{F}_q^*} \chi\Big(-\frac{t}{4x}\Big) \right\rvert^p+\sum_{z \in S_0^{d-1}\setminus\{0\}}\left\lvert \sum_{x \in \mathbb{F}_q^*} \chi(0) \right\rvert^p\right)^{1/p} \\
			&= q^{-d/p-d/2-1} \left( \sum_{t \in \mathbb{F}_q^*} \# S_t^{d-1}+ (\#S_0^{d-1}-1)(q-1)^p\right)^{1/p} \\
				&\approx q^{-d/p-d/2-1} \left( q  q^{d-1}+ q^{d-1}q^p\right)^{1/p} \\
				&\approx q^{-d/p-d/2-1+(d-1)/p+1} \\
				&\approx q^{-d} (\#S_0^{d-1})^{1-s}
		\end{align*}
		for 
\[
s = \frac{d-2}{2(d-1)}+ \frac{1}{p(d-1)}
\]
as required.
\end{proof}

\subsection{Products and direct sums}

We find that sets with a product structure are useful for constructing interesting examples and so we first prove a general result about product sets.

\subsubsection{General theory of products}

\begin{thm} \label{products}
Let $d \geq 2$ and $1 \leq k < d$.  Let $E \subseteq  \mathbb{F}_{q}^{k}$ and $F \subseteq  \mathbb{F}_{q}^{d-k}$ and consider the direct sum $E \oplus F$ which we identify as a subset of $\mathbb{F}_{q}^{d}$. Let $p \geq 1$ and suppose $E$ is $(p,s)$-Salem and $F$ is $(p,t)$-Salem.  Then $E \oplus F$ is $(p,s')$-Salem for 
\[
s'=\frac{s \log  (\#E) + t\log (\#F)}{  \log  (\#E) +  \log (\#F)} \wedge \frac{\frac{k}{p} \log q + t\log (\#F)}{  \log  (\#E) +  \log (\#F)}  \wedge \frac{s \log  (\#E) + \frac{d-k}{p} \log q}{  \log  (\#E) +  \log (\#F)}.
\]
In particular, if $E$ is $(\infty,s)$-Salem and $F$ is $(\infty,t)$-Salem, then $E \oplus F$ is $(\infty,s')$-Salem for 
\[
s'=  \frac{ s \log  (\#E) \wedge  t\log (\#F)    }{  \log  (\#E) +  \log (\#F)}.
\]
\end{thm}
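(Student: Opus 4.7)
The plan is to use the fact that the Fourier transform of a direct sum factorises over its factors, and then to carefully split the resulting $L^p$ sum according to whether each coordinate is zero.

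First, I would verify the pointwise identity $\widehat{E \oplus F}(x_1, x_2) = \widehat E(x_1)\,\widehat F(x_2)$ for all $(x_1, x_2) \in \mathbb{F}_q^k \times \mathbb{F}_q^{d-k}$. This follows directly from the definition of the Fourier transform: the character $\chi(-(x_1, x_2)\cdot (y_1, y_2))$ splits as $\chi(-x_1 \cdot y_1)\chi(-x_2 \cdot y_2)$, and the normalising constants $q^{-d}$, $q^{-k}$, $q^{-(d-k)}$ balance to leave no residual factor of $q$.

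Next, I would expand $\|\widehat{E \oplus F}\|_p^p$ as a sum of $|\widehat E(x_1)|^p|\widehat F(x_2)|^p$ over $(x_1, x_2) \neq 0$ and split it into three disjoint regions: (i) both $x_1, x_2$ non-zero, (ii) $x_1 = 0$ and $x_2 \neq 0$, and (iii) $x_1 \neq 0$ and $x_2 = 0$. In all three regions the double sum factorises as a product; on region (i) this gives $\|\widehat E\|_p^p \|\widehat F\|_p^p$ once the $q^{-d}$ is absorbed, while on regions (ii) and (iii) I would invoke \eqref{00} to evaluate the vanishing-coordinate factor explicitly as $|\widehat E(0)| = q^{-k}\#E$ or $|\widehat F(0)| = q^{-(d-k)}\#F$. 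Inserting the hypotheses $\|\widehat E\|_p \lesssim q^{-k}(\#E)^{1-s}$ and $\|\widehat F\|_p \lesssim q^{-(d-k)}(\#F)^{1-t}$ into each of the three products and rewriting the result in the form $q^{-dp}(\#E \cdot \#F)^{(1-s'_j)p}$ reads off three admissible exponents $s'_j$; these are precisely the three quantities appearing in the minimum in the statement. The overall $L^p$ bound is controlled by the smallest of the three, which is exactly the asserted $s'$.

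For the $p = \infty$ case, the identical decomposition applies with the $\ell^p$-sum replaced by a supremum. Region (i) contributes $\|\widehat E\|_\infty \|\widehat F\|_\infty$, but the trivial bound $\|\widehat E\|_\infty \leq |\widehat E(0)| = q^{-k}\#E$ (and symmetrically for $F$) shows this is dominated by both the region (ii) and region (iii) contributions. Hence the minimum over the three exponents collapses to the minimum of just $\frac{s\log \#E}{\log \#E + \log \#F}$ and $\frac{t\log \#F}{\log \#E + \log \#F}$, which is the stated formula.

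The main obstacle is purely organisational rather than conceptual: keeping the various powers of $q$, the differing normalisations of the $k$- and $(d-k)$-dimensional Fourier transforms, and the three boundary contributions straight in parallel. Once the factorisation identity and the role of \eqref{00} are in hand, the three bounds are essentially forced, and the collapse in the $p=\infty$ case is immediate from the trivial bound on $\|\widehat E\|_\infty$.
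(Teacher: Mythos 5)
Your proposal is correct and follows essentially the same route as the paper: factorise $\widehat{E\oplus F}$, split the $\ell^p$-sum over $\mathbb{F}_q^d\setminus\{0\}$ into the three regions determined by which of the two coordinate blocks vanishes, and evaluate the zero-coordinate factors via \eqref{00}. Your explicit justification that in the $p=\infty$ case the region-(i) term is dominated by the other two (via the trivial bound $\|\widehat E\|_\infty \le q^{-k}\#E$) is a correct elaboration of a step the paper leaves implicit.
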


\begin{proof}
For $z \in \mathbb{F}_{q}^{d}$ decompose uniquely as $z=z_k+ z_{d-k}$ for $z_k \in \mathbb{F}_{q}^{k}$ and $z_{d-k} \in \mathbb{F}_{q}^{d-k}$.  Then
\begin{align*}
\widehat{ E \oplus F} (z) &= q^{-d}   \sum_{x \in E  }\sum_{ y   \in F }    \chi(-(x+y) \cdot (z_k+ z_{d-k})) \\
 &= \left(q^{-k}   \sum_{x \in E  } \chi(-x\cdot z_k) \right) \left( q^{-(d-k)} \sum_{ y   \in F }     \chi(-y\cdot z_{d-k})  \right) \\
&=  \widehat{E}(z_k) \widehat F(z_{d-k}).
\end{align*}
Therefore, 
\begin{align*}
\|\widehat{ E \oplus F}  \|_p   &=  \bigg(q^{-d} \sum_{ z_k \in \mathbb{F}_{q}^{k}\setminus \{0\} }  \sum_{ z_{d-k} \in \mathbb{F}_{q}^{d-k}\setminus \{0\}   } | \widehat{E}(z_k) |^p \, |  \widehat F(z_{d-k}) |^p  \\
&\, \qquad 
+   q^{-d}\sum_{ z_{d-k} \in \mathbb{F}_{q}^{d-k}\setminus \{0\}   } | \widehat{E}(0) |^p \, |  \widehat F(z_{d-k}) |^p
 + q^{-d}\sum_{ z_k \in \mathbb{F}_{q}^{k}\setminus \{0\} } | \widehat{E}(z_k) |^p \, |  \widehat F(0) |^p \bigg)^{1/p} \\ 
&\approx   \| \widehat E  \|_p  \| \widehat F \|_p  + q^{-k/p} q^{-k } (\#E)  \| \widehat  F \|_p  + q^{-(d-k)/p} q^{-(d-k)} (\#F) \|  \widehat E  \|_p  \\
&\lesssim q^{-k} (\# E)^{1-s}   q^{-(d-k)} (\# F)^{1-t} \\
&\, \qquad  + q^{-k/p} q^{-k } (\#E)   q^{-(d-k)} (\# F)^{1-t}  + q^{-(d-k)/p} q^{-(d-k)} (\#F)q^{-k} (\# E)^{1-s}  \\
&\approx   q^{-d} (\#E \oplus F)^{1-s'}  
\end{align*}
noting that $\#E \oplus F = (\#E)(\#F)$, for 
\[
s'= \frac{s \log  (\#E) + t\log (\#F)}{  \log  (\#E) +  \log (\#F)} \wedge \frac{\frac{k}{p} \log q + t\log (\#F)}{  \log  (\#E) +  \log (\#F)}  \wedge \frac{s \log  (\#E) + \frac{d-k}{p} \log q}{  \log  (\#E) +  \log (\#F)} 
\]
as required.
\end{proof}

In the above, it might be slightly unsatisfying  to allow $s'$ to depend on $q$.  This can be overcome to provide a simpler statement as long as $\#E$ and $\#F$ are explicit powers of $q$ up to constants. This will certainly be the case in most applications.
\begin{cor} \label{products2}
Let $d \geq 2$ and $1 \leq k < d$.  Let $E \subseteq  \mathbb{F}_{q}^{k}$ and $F \subseteq  \mathbb{F}_{q}^{d-k}$ and suppose $\#E \approx q^\alpha$ and $\#F\approx q^\beta$.  Consider the direct sum $E \oplus F$ which we identify as a subset of $\mathbb{F}_{q}^{d}$ and satisfies $\#(E \oplus F) \approx q^{\alpha+\beta}$. Let $p \geq 1$ and suppose $E$ is $(p,s)$-Salem and $F$ is $(p,t)$-Salem.  Then $E \oplus F$ is $(p,s')$-Salem for 
\[
s'=\frac{s \alpha + t\beta}{  \alpha +  \beta} \wedge \frac{k/p + t\beta}{  \alpha +  \beta}  \wedge \frac{s \alpha + (d-k)/p }{ \alpha +  \beta}.
\]
In particular, if $E$ is $(\infty,s)$-Salem and $F$ is $(\infty,t)$-Salem, then $E \oplus F$ is $(\infty,s')$-Salem for 
\[
s'=  \frac{ s\alpha \wedge  t\beta    }{  \alpha + \beta}.
\]
\end{cor}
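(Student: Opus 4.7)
The plan is to deduce Corollary \ref{products2} directly from Theorem \ref{products}: all three minimands in the formula for $s'$ become ratios of logarithms, and the hypotheses $\#E \approx q^\alpha$, $\#F \approx q^\beta$ should reduce each of them to the claimed expression. The only subtlety is that the multiplicative constants hidden in $\approx$ translate to $O(1)$ additive errors inside the logarithms, and I need to check that these errors are absorbed into the implicit constant in the definition of $(p,s)$-Salem.

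First, I would note that $\#E \approx q^\alpha$ is equivalent to $\log(\#E) = \alpha\log q + O(1)$, and similarly $\log(\#F) = \beta\log q + O(1)$. Substituting into any one of the three minimands of Theorem \ref{products}, for instance,
\[
\frac{s\log(\#E) + t\log(\#F)}{\log(\#E) + \log(\#F)} = \frac{(s\alpha + t\beta)\log q + O(1)}{(\alpha+\beta)\log q + O(1)} = \frac{s\alpha + t\beta}{\alpha+\beta} + O\left(\frac{1}{\log q}\right),
\]
and analogously for the other two. Writing $s'_{\mathrm{thm}}$ for the exponent produced by Theorem \ref{products} and $s'_{\mathrm{cor}}$ for the exponent claimed in the corollary, this gives $s'_{\mathrm{thm}} = s'_{\mathrm{cor}} + O(1/\log q)$.

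Next I would verify that this discrepancy is absorbed. Since $\#(E\oplus F) = (\#E)(\#F) \approx q^{\alpha+\beta}$, we have
\[
\big(\#(E\oplus F)\big)^{1-s'_{\mathrm{thm}}} = \big(\#(E\oplus F)\big)^{1-s'_{\mathrm{cor}}} \cdot q^{(\alpha+\beta)\cdot O(1/\log q)} \approx \big(\#(E\oplus F)\big)^{1-s'_{\mathrm{cor}}},
\]
using that $q^{O(1/\log q)} = O(1)$. Therefore the bound $\|\widehat{E\oplus F}\|_p \lesssim q^{-d}(\#(E\oplus F))^{1-s'_{\mathrm{thm}}}$ delivered by Theorem \ref{products} coincides, up to a constant factor, with $\|\widehat{E\oplus F}\|_p \lesssim q^{-d}(\#(E\oplus F))^{1-s'_{\mathrm{cor}}}$, which is exactly the $(p, s'_{\mathrm{cor}})$-Salem condition.

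Finally, for the $p = \infty$ statement the terms $k/p$ and $(d-k)/p$ vanish, reducing the three candidates to $\tfrac{s\alpha + t\beta}{\alpha+\beta}$, $\tfrac{t\beta}{\alpha+\beta}$, and $\tfrac{s\alpha}{\alpha+\beta}$. The first exceeds each of the other two (since $s\alpha, t\beta \geq 0$), so the overall minimum collapses to $\tfrac{s\alpha \wedge t\beta}{\alpha+\beta}$ as claimed. I do not anticipate any real obstacle here; the corollary is essentially a bookkeeping exercise to pass from the general formula to its parametrised form, with the only honest technical point being the absorption of the $O(1/\log q)$ error in the exponent into the $\lesssim$ constant.
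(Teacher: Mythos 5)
Your proposal is correct and follows exactly the route the paper intends: the corollary is stated without a separate proof precisely because it is the substitution $\log(\#E)=\alpha\log q+O(1)$, $\log(\#F)=\beta\log q+O(1)$ into Theorem \ref{products}, and your observation that the resulting $O(1/\log q)$ shift in the exponent is absorbed via $q^{O(1/\log q)}=O(1)$ is the one genuine point that needs checking. The simplification at $p=\infty$ is also handled correctly.
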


Theorem \ref{products} gives information in one direction:~the Fourier behaviour of the product set must be at least as good as the formula stated in terms of the Fourier behaviour of the marginals.  It is also useful to ask for information in the other direction.  This question is slightly more subtle and in general there is not a precise characterisation.  

\begin{thm} \label{productsreverse}
Let $d \geq 2$ and $1 \leq k < d$.  Let $E \subseteq  \mathbb{F}_{q}^{k}$ and $F \subseteq  \mathbb{F}_{q}^{d-k}$ and consider the direct sum $E \oplus F$ which we identify as a subset of $\mathbb{F}_{q}^{d}$. Let $p \geq 1$ and suppose $E$ is \emph{not} $(p,s)$-Salem.  Then $E \oplus F$ is \emph{not} $(p,s')$-Salem for 
\[
s'= \frac{s \log  (\#E) + \frac{d-k}{p} \log q}{  \log  (\#E) +  \log (\#F)}.
\]
Similarly, if $F$ is \emph{not} $(p,t)$-Salem, then $E \oplus F$ is \emph{not} $(p,s')$-Salem for 
\[
s'=  \frac{\frac{k}{p} \log q + t\log (\#F)}{  \log  (\#E) +  \log (\#F)} .
\]
\end{thm}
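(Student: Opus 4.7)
The plan mirrors the proof of Theorem \ref{products}, but extracts a \emph{lower} bound on $\|\widehat{E \oplus F}\|_p$ from the factorization $\widehat{E \oplus F}(z) = \widehat E(z_k)\widehat F(z_{d-k})$ rather than the upper bound used there. Splitting the sum over $z \in \mathbb{F}_q^d \setminus \{0\}$ into the three cases according to whether each component $z_k, z_{d-k}$ vanishes, I would record the exact identity
\[
\|\widehat{E \oplus F}\|_p^p = \|\widehat E\|_p^p \|\widehat F\|_p^p + q^{-k}|\widehat E(0)|^p \|\widehat F\|_p^p + q^{-(d-k)}|\widehat F(0)|^p \|\widehat E\|_p^p.
\]
Since the right-hand side is a sum of non-negative terms, I can discard the first two and use $|\widehat F(0)| = q^{-(d-k)}(\#F)$ to obtain the clean lower bound
\[
\|\widehat{E \oplus F}\|_p \gtrsim q^{-(d-k)/p}\, q^{-(d-k)}(\#F)\, \|\widehat E\|_p.
\]

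Feeding in the hypothesis $\|\widehat E\|_p \gg q^{-k}(\#E)^{1-s}$ then gives
\[
\|\widehat{E \oplus F}\|_p \gg q^{-d - (d-k)/p}\,(\#F)(\#E)^{1-s}.
\]
A routine comparison of exponents with $q^{-d}(\#(E \oplus F))^{1-s'} = q^{-d}\bigl((\#E)(\#F)\bigr)^{1-s'}$ identifies the threshold
\[
s' = \frac{s\log(\#E) + \tfrac{d-k}{p}\log q}{\log(\#E) + \log(\#F)}
\]
at which $E \oplus F$ fails to be $(p,s')$-Salem, proving the first assertion. The second assertion is then just the same argument with the roles of $E$ and $F$ swapped, isolating the middle term of the decomposition instead of the last.

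The only subtlety—rather than a genuine obstacle—is tracking the $\gg$ relation (the negation of $\lesssim$) through the chain of inequalities. This is automatic here because we only ever lower bound a sum of non-negative quantities by a single summand, so the large contribution of $\|\widehat E\|_p$ cannot be cancelled by the other terms. In contrast with Theorem \ref{products}, where the upper bound delivered the \emph{minimum} of three candidate exponents, the lower bound here singles out just one term; this asymmetry is precisely why Theorems \ref{products} and \ref{productsreverse} do not combine into a sharp characterisation, consistent with the authors' remark that the problem is subtle in general.
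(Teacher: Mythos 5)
Your proposal is correct and follows essentially the same route as the paper: both extract the lower bound $\|\widehat{E\oplus F}\|_p \gtrsim q^{-(d-k)/p}q^{-(d-k)}(\#F)\|\widehat E\|_p$ from the factorisation $\widehat{E\oplus F}(z)=\widehat E(z_k)\widehat F(z_{d-k})$ by isolating the $z_k\neq 0$, $z_{d-k}=0$ block, feed in $\|\widehat E\|_p \gg q^{-k}(\#E)^{1-s}$, and match exponents, with the second claim by symmetry. Your only addition is writing the three-term decomposition as an exact identity rather than citing the proof of Theorem \ref{products}, which is a harmless (and correct) elaboration.
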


\begin{proof}
Following the proof of Theorem \ref{products},
\begin{align*}
\|\widehat{ E \oplus F}  \|_p   &\gtrsim   q^{-(d-k)/p} q^{-(d-k)} (\#F) \|  \widehat E  \|_p
\end{align*}
and, since $E$ is \emph{not} $(p,s)$-Salem,
\[
\|  \widehat E  \|_p \gg q^{-k} (\# E)^{1-s}.
\]
Therefore
\begin{align*}
\|\widehat{ E \oplus F}  \|_p   &\gg q^{-(d-k)/p} q^{-(d-k)} (\#F)q^{-k} (\# E)^{1-s}  \\
&=   q^{-d} (\#E \oplus F)^{1-s'}  
\end{align*}
for 
\[
s'=  \frac{s \log  (\#E) + \frac{d-k}{p} \log q}{  \log  (\#E) +  \log (\#F)} 
\]
as required.  The second claim follows by a symmetrical argument which is omitted.
\end{proof}

Combining Theorems \ref{products} and \ref{productsreverse} we get a precise characterisation of $\alpha_{E \oplus F}(p)$ provided the minimum is obtained by either the second or third term.  In particular, this always happens for $p = \infty$.  However, a precise characterisation is not possible in general.  This can be seen  by, for example, defining $E$ and $F$ to alternate between a set with good Fourier behaviour  and a set with bad Fourier behaviour as $q$ increases and choosing $E$ to be good  when $F$ is bad and vice versa.  This means that the (uniform) Fourier behaviour of both $E$ and $F$ in general is bad but the product may have better Fourier behaviour than the formula predicted in Theorem \ref{products}. This is rather artificial and we can rule out this sort of behaviour by making a regularity assumption on either $E$ or $F$. We state the result making an additional assumption on $F$ but the symmetric result using $E$ can be obtained similarly.

\begin{thm} \label{productsreg}
Let $d \geq 2$ and $1 \leq k < d$.  Let $E \subseteq  \mathbb{F}_{q}^{k}$ and $F \subseteq  \mathbb{F}_{q}^{d-k}$ and consider the direct sum $E \oplus F$ which we identify as a subset of $\mathbb{F}_{q}^{d}$. Let $p \geq 1$ and suppose $E$ is \emph{not} $(p,s)$-Salem and 
\[
\|  \widehat F  \|_p \gtrsim q^{-(d-k)} (\# F)^{1-t}.
\]
 Then $E \oplus F$ is \emph{not} $(p,s')$-Salem for 
\[
s'= \frac{s \log  (\#E) + t\log (\#F)}{  \log  (\#E) +  \log (\#F)}.
\]
\end{thm}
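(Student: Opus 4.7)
The plan is to follow closely the Fourier factorisation used in the proof of Theorem \ref{products}, but this time extract the genuine \emph{main term} $\|\widehat E\|_p \|\widehat F\|_p$ (rather than one of the two cross terms that produced the bound in Theorem \ref{productsreverse}). The regularity hypothesis on $F$ is precisely what is needed to make the main term large enough to exploit the failure of $E$ to be $(p,s)$-Salem; without it, the good behaviour of $F$ on the $z_{d-k}=0$ slice could in principle drown out the main term, which is why no clean converse holds in general.

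The first step is to reuse the factorisation $\widehat{E\oplus F}(z_k+z_{d-k})=\widehat E(z_k)\widehat F(z_{d-k})$ obtained in the proof of Theorem \ref{products}. Restricting the $L^p$ sum to the product of the punctured spaces gives
\[
\|\widehat{E\oplus F}\|_p^p \;\geq\; q^{-d}\!\!\sum_{\substack{z_k\in\mathbb{F}_q^k\setminus\{0\}\\ z_{d-k}\in\mathbb{F}_q^{d-k}\setminus\{0\}}}\!\!|\widehat E(z_k)|^p\,|\widehat F(z_{d-k})|^p \;=\; \|\widehat E\|_p^p\,\|\widehat F\|_p^p,
\]
using the normalisation in \eqref{lpnorm} for both factors. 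Hence one obtains the clean inequality $\|\widehat{E\oplus F}\|_p \geq \|\widehat E\|_p\,\|\widehat F\|_p$, which is the only non-trivial ingredient.

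The second step is to multiply the two hypotheses. Since $E$ is not $(p,s)$-Salem we have $\|\widehat E\|_p \gg q^{-k}(\#E)^{1-s}$, and by the regularity assumption $\|\widehat F\|_p \gtrsim q^{-(d-k)}(\#F)^{1-t}$. Because the second bound holds with a uniform implicit constant, the product can be taken along the same subsequence of $q$ witnessing the failure of the first bound, giving
\[
\|\widehat{E\oplus F}\|_p \;\gg\; q^{-d}(\#E)^{1-s}(\#F)^{1-t}.
\]
A short algebraic rearrangement, using $\#(E\oplus F)=\#E\cdot\#F$, rewrites the right-hand side as $q^{-d}(\#(E\oplus F))^{1-s'}$ with $s'$ as in the statement, which contradicts $(p,s')$-Salem.

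There is no real obstacle here; the only point that deserves care is the bookkeeping with the asymmetric notations $\gg$ (failure of $\lesssim$, hence a subsequential statement) and $\gtrsim$ (a uniform lower bound), and verifying that their product behaves as $\gg$. This is exactly why a regularity lower bound on $F$ (rather than merely "$F$ is $(p,t)$-Salem", which gives an upper bound on $\|\widehat F\|_p$) is the correct hypothesis, and it explains why the corresponding statement with $E$ and $F$ interchanged follows by the same argument.
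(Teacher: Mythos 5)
Your proof is correct and follows essentially the same route as the paper's: factor $\widehat{E\oplus F}(z_k+z_{d-k})=\widehat E(z_k)\widehat F(z_{d-k})$, drop to the product of punctured spaces to get $\|\widehat{E\oplus F}\|_p\geq\|\widehat E\|_p\|\widehat F\|_p$, and multiply the lower bounds. Your explicit note that the uniform $\gtrsim$ for $F$ is what lets the subsequential $\gg$ for $E$ survive the product is a small but genuine clarification of a point the paper leaves implicit.
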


\begin{proof}
Following the proof of Theorem \ref{products},
\begin{align*}
\|\widehat{ E \oplus F}  \|_p   &\gtrsim   \| \widehat E  \|_p  \| \widehat F \|_p.
\end{align*}
Since   $E$ is \emph{not} $(p,s)$-Salem,
\[
\|  \widehat E  \|_p \gg q^{-k} (\# E)^{1-s}.
\]
Using this, and the regularity assumption on $F$, 
\begin{align*}
\|\widehat{ E \oplus F}  \|_p   &\gg  q^{-k} (\# E)^{1-s} q^{-(d-k)} (\# F)^{1-t}.  \\
&=   q^{-d} (\#E \oplus F)^{1-s'}  
\end{align*}
for 
\[
s'=   \frac{s \log  (\#E) + t\log (\#F)}{  \log  (\#E) +  \log (\#F)} 
\]
as required. 
\end{proof}

\subsubsection{Applications of general theory of products}

If we set $F = \{0\}$ in the general results above, then we obtain a useful result which characterises Fourier analytic behaviour for sets living in a subspace.

\begin{cor} \label{subspace}
Let $d \geq 2$ and $1 \leq k < d$.  Let $E \subseteq  \mathbb{F}_{q}^{k}$  identified as a subset of $\mathbb{F}_{q}^{d}$. Let $p \geq 1$ and suppose $E$ is $(p,s)$-Salem as a subset of $  \mathbb{F}_{q}^{k}$.  Then   $E$ is $(p,s')$-Salem as a subset of $  \mathbb{F}_{q}^{d}$ if and only if  
\[
s'\leq s \wedge \frac{\frac{k}{p} \log q }{  \log  (\#E) } .
\]
\end{cor}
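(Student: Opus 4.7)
The plan is to apply the product machinery of Theorems \ref{products} and \ref{productsreg} to the direct sum $E \oplus \{0\}$, where $\{0\} \subseteq \mathbb{F}_q^{d-k}$. Since every $y \in E$ is identified with $(y, 0) \in \mathbb{F}_q^d$, the set $E$ viewed inside $\mathbb{F}_q^d$ is precisely this direct sum. I would first record the Fourier behaviour of $F = \{0\} \subseteq \mathbb{F}_q^{d-k}$: its Fourier transform is the constant $\widehat{F}(w) = q^{-(d-k)}$, so $\|\widehat F\|_p \approx q^{-(d-k)}$. Because $\#F = 1$, we have $(\#F)^{1-t} = 1$ for every $t$, so $F$ is automatically $(p,t)$-Salem for every $t \in [0,1]$ and in fact satisfies the matching lower bound $\|\widehat F\|_p \gtrsim q^{-(d-k)}(\#F)^{1-t}$ required by the regularity hypothesis of Theorem \ref{productsreg}.

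For the ``if'' direction I apply Theorem \ref{products}. Since $\log(\#F) = 0$, the three terms inside the minimum collapse to
\[
s, \qquad \frac{(k/p)\log q}{\log(\#E)}, \qquad s + \frac{((d-k)/p)\log q}{\log(\#E)}.
\]
The third is never smaller than the first, so the effective value is $s' = s \wedge \tfrac{(k/p)\log q}{\log(\#E)}$, and the trivial monotonicity that $(p, s')$-Salem implies $(p, s'')$-Salem for $s'' \leq s'$ gives the claimed range.

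For the ``only if'' direction I establish the two matching upper bounds separately. The bound $s' \leq s$ (with $s$ read as the sharp value $\alpha_E(p)$ for $E$ inside $\mathbb{F}_q^k$) follows directly from Theorem \ref{productsreg}: the regularity hypothesis on $F = \{0\}$ holds by the preceding computation, and $\log(\#F) = 0$ collapses its conclusion to the statement that failure of $E$ above exponent $s$ in $\mathbb{F}_q^k$ propagates to the same failure in $\mathbb{F}_q^d$. The bound $s' \leq \tfrac{(k/p)\log q}{\log(\#E)}$ is a direct computation: on the slice $\{z \in \mathbb{F}_q^d : z_k = 0,\ z_{d-k}\neq 0\}$ one has $|\widehat{E}(z)| = q^{-d}(\#E)$ for each of the $q^{d-k}-1$ such points, contributing $\approx q^{-k-pd}(\#E)^p$ to $\|\widehat{E}\|_p^p$ in $\mathbb{F}_q^d$ and so forcing $\|\widehat{E}\|_p \gtrsim q^{-d}(\#E)^{1 - (k/p)\log q/\log(\#E)}$.

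The main obstacle is mild: keeping careful track of the normalising factors $q^{-k}$ versus $q^{-d}$ when passing between Fourier transforms on the two ambient spaces. It is essential that the zero mode is excluded from \eqref{lpnorm}, since this is exactly what allows the slice $z_k = 0,\ z_{d-k}\neq 0$ to produce the second obstruction above.
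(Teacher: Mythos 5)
Your proof is correct and follows essentially the same route as the paper: both apply Theorem \ref{products} with $F=\{0\}\subseteq\mathbb{F}_q^{d-k}$ for the ``if'' direction and use the regularity of $\{0\}$ (Theorem \ref{productsreg}) for the obstruction $s'\leq s$. Your only deviation is proving the obstruction $s'\leq \frac{(k/p)\log q}{\log \#E}$ by the direct slice computation on $\{(0,z_{d-k}):z_{d-k}\neq 0\}$ rather than citing Theorem \ref{productsreverse}; this is sound, and in fact slightly cleaner, since $\{0\}$ is $(p,t)$-Salem for every $t$ and so the literal hypothesis of the second part of Theorem \ref{productsreverse} is never met for this $F$ --- your computation supplies exactly the lower bound that the paper's proof of that theorem actually uses.
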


\begin{proof}
Setting  $F = \{0\} \subseteq \mathbb{F}_{q}^{d-k}$ in Theorem \ref{products} shows that $E$ is $(p,s')$-Salem for $s'\leq s \wedge \frac{\frac{k}{p} \log q }{  \log  (\#E) }$.  To get the `only if' part, we must apply Theorems \ref{productsreverse} and \ref{productsreg} and observe that $F$ is regular in the sense of Theorem  \ref{productsreg} since $\|  \widehat F  \|_p = q^{-(d-k)}$. (The choice of $t$ is irrelevant.)
\end{proof}

Next we consider some cylinder sets, and we observe some non-trivial Fourier behaviour which beats the  $(p,1/p)$ estimate. These sets fail to be Salem, but only just.

\begin{cor}
Let $d \geq 3$, $r \in \mathbb{F}_{q}$ with $r \neq 0$ and 
\[
E = \{ (y_1, \dots, y_{d})  \in \mathbb{F}_{q}^d : y_1^2+\cdots + y_{d-1}^2 = r\}
\]
be the cylinder in $\mathbb{F}_{q}^d$. Then  for all $p \geq 1$ $E$ is $(p, s)$-Salem if and only if
\[
s\leq \frac{2+(d-2)p}{2p(d-1)} .
\]
In particular, $\alpha_E(\infty)= \frac{d-2}{2(d-1)} \to 1/2$ as $d \to \infty$.
\end{cor}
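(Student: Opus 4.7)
The plan is to exploit the product structure of the cylinder. Writing $y = (y', y_d) \in \mathbb{F}_q^{d-1} \times \mathbb{F}_q$, the defining equation ignores $y_d$ entirely, so $E$ is the direct sum $S_r^{d-2} \oplus \mathbb{F}_q$, where $S_r^{d-2} \subseteq \mathbb{F}_q^{d-1}$ is the sphere of nonzero radius $r$. Since $d - 1 \geq 2$ and $r \neq 0$, the Iosevich--Rudnev result quoted at the start of Section \ref{examplessec} gives that $S_r^{d-2}$ is Salem with $\# S_r^{d-2} \approx q^{d-2}$, whence $\# E \approx q^{d-1}$.

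Although one could simply invoke the product machinery -- Corollary \ref{products2} with $t$ taken arbitrarily large for the ``if'' direction, and Theorem \ref{productsreverse} with $s$ slightly above $1/2$ for the ``only if'' -- I would prefer to compute $\widehat E$ directly. Factoring the defining sum and using character orthogonality in the last coordinate, the inner sum $\sum_{y_d \in \mathbb{F}_q} \chi(-z_d y_d)$ equals $q$ when $z_d = 0$ and vanishes otherwise, which yields
\[
\widehat E(z) = \begin{cases} \widehat{S_r^{d-2}}(z') & \text{if } z_d = 0, \\ 0 & \text{if } z_d \neq 0, \end{cases}
\]
for $z = (z', z_d)$. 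Plugging this into the definition of $\|\cdot\|_p$ collapses the sum over $\mathbb{F}_q^d$ to one over $\mathbb{F}_q^{d-1}$ and produces the clean identity $\| \widehat E \|_p = q^{-1/p} \| \widehat{S_r^{d-2}} \|_p$.

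The remaining task is two-sided control $\| \widehat{S_r^{d-2}} \|_p \approx q^{-d/2}$ for every $p \geq 1$. The Salem property gives $\| \widehat{S_r^{d-2}} \|_\infty \lesssim q^{-d/2}$, and monotonicity of $L^p$ on the (near-)probability measure $q^{-d}$ transfers this to all $p \in [1,\infty]$. For the matching lower bound, Proposition \ref{alwaysbound} combined with Plancherel handles $p \geq 2$; for $1 \leq p < 2$ I would use the log-convexity (Hölder) inequality $\| \widehat{S_r^{d-2}} \|_2 \leq \| \widehat{S_r^{d-2}} \|_p^{p/2} \| \widehat{S_r^{d-2}} \|_\infty^{1-p/2}$ and solve. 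Consequently $\| \widehat E \|_p \approx q^{-1/p - d/2}$, and matching this against $q^{-d}(\#E)^{1-s} \approx q^{-1 - (d-1)s}$ pins down $s = \tfrac{2 + (d-2)p}{2p(d-1)}$ as the threshold in both directions. The asymptotic $\alpha_E(\infty) = \tfrac{d-2}{2(d-1)} \to \tfrac{1}{2}$ is then read off at $p = \infty$. No genuine obstacle arises; the only delicate point is the sub-$L^2$ lower bound on the sphere, which is where the Hölder interpolation step is essential, as Plancherel alone does not cover $p < 2$.
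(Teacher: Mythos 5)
Your proof is correct, and it takes a more hands-on route than the paper. The paper identifies the same decomposition $E = S_r^{d-2}\oplus\mathbb{F}_q$ but then simply invokes the black-box product machinery: Theorem \ref{products} (with $\mathbb{F}_q$ being $(p,t)$-Salem for all $t$ as a subset of itself) for the ``if'' direction, and Theorem \ref{productsreverse} (fed by the fact, from Proposition \ref{alwaysbound}, that the sphere is not $(p,s)$-Salem for $s>1/2$) for the ``only if''. You instead recompute the factorisation $\widehat E(z',z_d)=\widehat{S_r^{d-2}}(z')\mathbf{1}_{\{z_d=0\}}$ explicitly, which yields the exact identity $\|\widehat E\|_p=q^{-1/p}\|\widehat{S_r^{d-2}}\|_p$, and then establish the two-sided bound $\|\widehat{S_r^{d-2}}\|_p\approx q^{-d/2}$ for all $p\ge 1$ — upper bound from the Salem property and monotonicity, lower bound from Proposition \ref{alwaysbound} for $p\ge2$ and from the log-convexity inequality $\|f\|_2\le\|f\|_p^{p/2}\|f\|_\infty^{1-p/2}$ for $1\le p<2$. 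That last interpolation step is a genuine addition: the paper's route never needs a sub-$L^2$ lower bound on the sphere because Theorem \ref{productsreverse} only requires non-Salemness at the single exponent in question, whereas your route needs the sharp $L^p$ lower bound for the marginal at every $p$. What your approach buys is self-containedness and an exact formula for $\|\widehat E\|_p$; what the paper's buys is brevity and reusability of the general product theorems. (One cosmetic slip: the normalising measure for the sphere is $q^{-(d-1)}$, not $q^{-d}$, but this does not affect anything.)
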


\begin{proof}
Write  $S_r^{d-2} \subset  \mathbb{F}_{q}^{d-1}$ for the  $r$-sphere in $\mathbb{F}_{q}^{d-1}$.  Then $E = S_r^{d-2} \oplus \mathbb{F}_q$ and it follows from Theorem \ref{products} that
$E $ is $(p,s)$-Salem for 
\[
s \leq  \frac{\frac{1}{2} \log  ( S_r^{d-2}) + \frac{1}{p} \log q}{  \log  ( S_r^{d-2}) +  \log (\mathbb{F}_q)} = \frac{(d-2)p+2}{2p(d-1)} .
\]
  Here we use that $S_r^{d-2}$ is a Salem set in  $\mathbb{F}_{q}^{d-1}$  (see \cite{iosevich}) and that $\mathbb{F}_q$ is $(p,t)$-Salem for all $t \geq 0$ as a subset of itself.  The `only if' part comes from Theorem  \ref{productsreverse}.
\end{proof}

Next we  provide simple examples of sets $E$ for which $\alpha_E(\infty)=s$ for a dense set of $s$ in $[0,1]$, thus demonstrating that more information can be gleaned from studying $\alpha_E(\infty)$ alone than only considering the   dichotomy   of being Salem or not Salem.

\begin{prop} \label{smallcomplement}
Let $1\leq k <d$,  $E_k= \mathbb{F}_{q}^k \times\{0\} \subseteq \mathbb{F}_{q}^d$ and $E = \mathbb{F}_{q}^d \setminus E_k$.  Then $E$ is  $(p,s)$-Salem if and only if 
\[
s \leq 1 -\frac{k}{d}+ \frac{k}{pd}.
\]
In particular,  $\alpha_E(\infty) = 1-k/d$ and $E$ is Salem if and only if $k \leq d/2$.
\end{prop}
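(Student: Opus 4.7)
The plan is to compute $\widehat{E}$ explicitly by complementation and then evaluate the $L^p$ norm directly. Since $E = \mathbb{F}_q^d \setminus E_k$, linearity of the Fourier transform gives $\widehat{E}(x) = \widehat{\mathbb{F}_q^d}(x) - \widehat{E_k}(x)$, and \eqref{fullset} tells us that $\widehat{\mathbb{F}_q^d}(x) = 0$ for every $x \neq 0$. So for the purposes of computing $\|\widehat{E}\|_p$, which by \eqref{lpnorm} only sees nonzero frequencies, we may replace $\widehat{E}$ by $-\widehat{E_k}$.

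The next step is to note that $E_k = \mathbb{F}_q^k \oplus \{0\}$, and so the product formula from the proof of Theorem \ref{products} factorises the transform: writing $z = z_k + z_{d-k}$ with $z_k \in \mathbb{F}_q^k$ and $z_{d-k} \in \mathbb{F}_q^{d-k}$, we get
\[
\widehat{E_k}(z) = \widehat{\mathbb{F}_q^k}(z_k) \cdot \widehat{\{0\}}(z_{d-k}).
\]
The first factor is $1$ when $z_k = 0$ and $0$ otherwise (again by \eqref{fullset}), while the second factor is identically $q^{-(d-k)}$. Consequently $\widehat{E_k}$ is supported on the $(d-k)$-dimensional subspace $\{0\}^k \times \mathbb{F}_q^{d-k}$, where it takes the constant value $q^{-(d-k)}$.

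With this description in hand, the $L^p$ computation is immediate: the sum in \eqref{lpnorm} contains exactly $q^{d-k} - 1 \approx q^{d-k}$ nonzero frequencies contributing $q^{-(d-k)p}$ each, so
\[
\|\widehat{E}\|_p^p \approx q^{-d} \cdot q^{d-k} \cdot q^{-(d-k)p} = q^{-k - (d-k)p},
\]
hence $\|\widehat{E}\|_p \approx q^{-(d-k) - k/p}$. Since $\#E \approx q^d$, the $(p,s)$-Salem condition \eqref{thetasalem} becomes $q^{-(d-k)-k/p} \lesssim q^{-d + d(1-s)\cdot 0} = q^{-ds}$, i.e.\ $ds \leq (d-k) + k/p$, which rearranges to $s \leq 1 - k/d + k/(pd)$. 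The reverse direction (that no larger $s$ is allowed) follows from the same two-sided estimate on $\|\widehat{E}\|_p$, since the estimates above are in fact $\approx$, not merely $\lesssim$.

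There is no real obstacle here; the only subtle point is the observation that passing to the complement kills the $L^p$ norm on all frequencies where $\mathbb{F}_q^k$ has nonzero Fourier mass, collapsing the support of $\widehat{E}$ onto the annihilator of $\mathbb{F}_q^k$. The final statements about $\alpha_E(\infty) = 1 - k/d$ and the Salem property (which requires $\alpha_E(\infty) \geq 1/2$) then read off by substituting $p = \infty$ and comparing with $1/2$.
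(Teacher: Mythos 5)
Your proof is correct and essentially matches the paper's: both identify the support of $\widehat{E}$ as the punctured annihilator $E_k^\perp\setminus\{0\}=\left(\{0\}^k\times\mathbb{F}_q^{d-k}\right)\setminus\{0\}$, note the constant modulus $q^{k-d}$ there, and then evaluate the $L^p$ sum directly. The only difference is that you spell out the derivation of $\widehat{E}$ (via complementation plus the tensor factorisation) that the paper simply asserts; there is a stray typo in your exponent bookkeeping ($q^{-d+d(1-s)\cdot 0}$ should read $q^{-d+d(1-s)}$), but the conclusion $q^{-ds}$ is right.
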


\begin{proof}
Whilst this set is a product set, we give a direct proof rather than appealing to the previous results.  This is because the proof is very short, but also because we have not yet studied the Fourier behaviour of the marginals in detail. Note that $\#E \approx q^d$ and $\widehat E (z) = 0$ whenever $ z \notin E_k^\perp$ but otherwise $|\widehat E (z)| \approx q^{k-d}$ for $0 \neq z \in E_k^\perp$.  Then
\begin{align*}
\|\widehat E \|_p  &\approx  \left(q^{-d}  \sum_{z \in E_k^\perp} |q^{k-d}  |^{p}  \right)^{1/p}  \\
&\approx q^{-d/p} \left(   q^{(d-k)/p} q^{k-d}  \right)  \\
& \approx q^{-d} (\#E)^{1-s}
\end{align*}
for  $s=1-k/d+k/(pd)$ as required.
\end{proof}

The previous example shows that it is possible for a set to be $(4,1/2)$-Salem whilst not being $(\infty, 1/2)$-Salem.  In particular, this will hold for $d/2<k \leq 2d/3$.  This  example also raises the question of whether good Fourier decay is guaranteed if $\#E \sim  q^d$, recalling that  $\mathbb{F}_{q}^d$ itself is $(\infty,s)$-Salem for all $s \geq 0$.  The following proposition gives  a  quantitative answer to this question.

\begin{prop} \label{bigsets}
If $E \subseteq \mathbb{F}_{q}^d$ is such that $\# (E^c) \lesssim q^{\delta}$ for some $\delta \in (0,d)$, then $E$ is $(\infty,s)$-Salem for all $0 \leq s < 1-\delta/d$.  If,  in addition, $\# (E^c)  \gtrsim q^\gamma$ for some $\gamma \in (0,\delta]$, then $E$ is not $(\infty,s)$-Salem for all $ s > 1-\gamma/(2d)$.
\end{prop}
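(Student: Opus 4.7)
The central idea is to exploit the duality coming from the identity $\mathbf{1}_E + \mathbf{1}_{E^c} = \mathbf{1}_{\mathbb{F}_q^d}$. Taking the Fourier transform and using \eqref{fullset}, for every non-zero $x \in \mathbb{F}_q^d$ we have $\widehat E(x) + \widehat{E^c}(x) = 0$, and therefore $|\widehat E(x)| = |\widehat{E^c}(x)|$. This means that all the questions about the Fourier behaviour of $E$ on non-zero frequencies are really questions about the Fourier behaviour of the (much smaller) set $E^c$, and the small size of $E^c$ is exactly what we should use. Note also that the hypothesis $\#(E^c) \lesssim q^\delta$ with $\delta < d$ forces $\#E \approx q^d$, so $(\#E)^{1-s} \approx q^{d(1-s)}$ and the $(\infty,s)$-Salem condition becomes $\|\widehat E\|_\infty \lesssim q^{-ds}$.

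For the first (upper) bound, the plan is to apply the trivial estimate \eqref{trivial} to $\widehat{E^c}$. Namely, for $x \neq 0$,
\[
|\widehat E(x)| = |\widehat{E^c}(x)| \leq q^{-d} \#(E^c) \lesssim q^{\delta - d},
\]
so $\|\widehat E\|_\infty \lesssim q^{\delta - d}$. For any $s$ with $s < 1 - \delta/d$ (equivalently $-ds > \delta - d$), this bound is $\lesssim q^{-ds}$, which is exactly the $(\infty,s)$-Salem condition.

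For the second (lower) bound, I would combine Plancherel with the observation $|\widehat E(x)| = |\widehat{E^c}(x)|$. Applying \eqref{plancherel} to $E^c$ and subtracting the contribution at the origin gives
\[
\sum_{x \in \mathbb{F}_q^d \setminus \{0\}} |\widehat E(x)|^2 = \sum_{x \in \mathbb{F}_q^d \setminus \{0\}} |\widehat{E^c}(x)|^2 = q^{-d}\,\#(E^c)\bigl(1 - q^{-d}\#(E^c)\bigr).
\]
Because $\#(E^c) \lesssim q^\delta$ with $\delta < d$, the factor $1 - q^{-d}\#(E^c) \to 1$ and may be absorbed; using then $\#(E^c) \gtrsim q^\gamma$, the sum is $\gtrsim q^{\gamma - d}$. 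Since the sum contains at most $q^d$ terms, we deduce $q^d \|\widehat E\|_\infty^2 \gtrsim q^{\gamma - d}$, i.e.~$\|\widehat E\|_\infty \gtrsim q^{\gamma/2 - d}$. Finally, for $s > 1 - \gamma/(2d)$ we have $\gamma/2 - d > -ds$, so the lower bound $q^{\gamma/2 - d}$ grows strictly faster than $q^{-ds}$, which yields $\|\widehat E\|_\infty \gg q^{-d}(\#E)^{1-s}$ and thereby the failure of the $(\infty,s)$-Salem condition.

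There is no real obstacle here; the whole argument reduces to the duality $|\widehat E| = |\widehat{E^c}|$ off the origin and a routine Plancherel computation on $E^c$. The only small care point is checking that when $\#(E^c) \lesssim q^\delta$ with $\delta < d$, the $\ell^2$ contribution at the origin (which is of order $q^{2\gamma - 2d}$) is negligible compared with $q^{\gamma - d}$, allowing us to replace the full Plancherel sum with the non-zero part up to a constant.
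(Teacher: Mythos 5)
Your proof is correct and follows essentially the same route as the paper: both parts rest on the identity $|\widehat E(x)|=|\widehat{E^c}(x)|$ for $x\neq 0$, the trivial bound $q^{-d}\#(E^c)$ for the upper estimate, and a Plancherel computation on $E^c$ (which is exactly the content of Proposition \ref{alwaysbound}, which the paper simply cites rather than re-deriving) for the lower estimate. No gaps.
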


\begin{proof}
Fix  $x \in \mathbb{F}_{q}^d \setminus\{0\}$.  Using \eqref{fullset},
\[
|\widehat E(x)| = q^{-d} \Big\lvert  \sum_{y \in \mathbb{F}_q^d} \chi(-x \cdot y)  - \sum_{y \in E^c} \chi(-x \cdot y)\Big\rvert   =|\widehat{E^c}(x)|.
\]
Therefore,
\[
|\widehat E(x)|  \leq q^{-d} \# (E^c) = q^{-d} (\# E)^{1-t}
\]
for 
\[
t = 1-\frac{\log \# (E^c) }{\log \# E}.
\]
If  $\# (E^c) \lesssim q^{\delta}$, then   for all $s< 1-\delta/d$, $t>s$ for sufficiently large $q$, proving the first claim. Moreover,  since $\# (E^c) \lesssim q^{\delta}$,  Proposition \ref{alwaysbound} gives
\[
|\widehat E(x)| =|\widehat{E^c}(x)|  \gtrsim q^{-d} \sqrt{\# (E^c)} = q^{-d} (\# E)^{1-t'}
\]
for 
\[
t'= 1-\frac{\log \# (E^c) }{2\log \# E}.
\]
Therefore, if    $ \# (E^c) \gtrsim q^{\gamma}$, then for all $s>1-\gamma/(2d)$,   $t'<s$ for sufficiently large $q$, proving the second claim. 
\end{proof}

The sets constructed in Proposition \ref{smallcomplement} show that the bound $\alpha_E(\infty) \geq 1-\delta/d$ coming from  Proposition \ref{bigsets} is sharp (at least for integer $\delta$).

\subsection{Polynomial curves and surfaces}

In this section we consider curves and surfaces defined by polynomials.  We begin with a `flat' example, where we find the worst Fourier behaviour possible.  This shows the bounds from Corollary \ref{p1/p} are sharp.  

\begin{prop}
Suppose $d = mn$ for integers $m,n \geq 1$. For all $p \geq 2$, the set
\[
E = \{ (k, \dots, k) : k \in \mathbb{F}_{q}^n\} \subseteq \mathbb{F}_{q}^d
\]
is not $(p,s)$-Salem for $s>1/p$. (But it is  $(p, 1/p)$-Salem by Corollary \ref{p1/p}.)
\end{prop}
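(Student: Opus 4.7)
The plan is to compute $\widehat{E}$ exactly. Writing a generic $z \in \mathbb{F}_q^d$ as $z = (z_1, \dots, z_m)$ with each $z_i \in \mathbb{F}_q^n$, orthogonality of characters gives
\[
\widehat{E}(z) \;=\; q^{-d}\sum_{k \in \mathbb{F}_q^n} \chi\bigl(-k\cdot(z_1+\cdots+z_m)\bigr) \;=\; \begin{cases} q^{n-d} & \text{if } z_1+\cdots+z_m = 0,\\ 0 & \text{otherwise.}\end{cases}
\]
So $\widehat{E}$ is the indicator (up to the constant $q^{n-d}$) of the codimension-$n$ subspace $V=\{z : z_1+\cdots+z_m=0\}$, which has $q^{d-n}$ elements.

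Next I would plug this directly into the definition of the $L^p$ norm. Since the modulus of $\widehat{E}$ is constant on $V\setminus\{0\}$ and $\#(V\setminus\{0\}) \approx q^{d-n}$, we get
\[
\|\widehat{E}\|_p \;=\; \Bigl(q^{-d}\, (q^{d-n}-1)\, q^{(n-d)p}\Bigr)^{1/p} \;\approx\; q^{-n/p}\, q^{n-d}.
\]
Meanwhile $\#E = q^n$, so the target right-hand side of the $(p,s)$-Salem inequality is
\[
q^{-d}(\#E)^{1-s} \;=\; q^{-d+n(1-s)} \;=\; q^{n-d}\,q^{-ns}.
\]
Comparing the two expressions, the $(p,s)$-Salem inequality $\|\widehat{E}\|_p \lesssim q^{-d}(\#E)^{1-s}$ is equivalent to $q^{-n/p} \lesssim q^{-ns}$, i.e.\ to $s \leq 1/p$. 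Thus for any $s > 1/p$ we have $\|\widehat{E}\|_p \gg q^{-d}(\#E)^{1-s}$, which is exactly the required failure of the $(p,s)$-Salem condition.

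There is no real obstacle here: the point is purely structural. The diagonal $E$ sits inside the $n$-dimensional subspace $\Delta = \{(k,\dots,k)\}$ of $\mathbb{F}_q^d$, and its Fourier transform is concentrated on the annihilator $\Delta^\perp = V$, which is itself a large subspace. This concentration of $\widehat{E}$ on a set of size $\approx q^{d-n}$ is precisely what forces every $L^p$ norm to behave as badly as the lower bound from Corollary \ref{p1/p} allows, so the bound $\alpha_E(p) = 1/p$ is attained for every $p \geq 2$.
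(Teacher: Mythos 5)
Your proof is correct and follows essentially the same route as the paper: both compute $\widehat{E}$ exactly as $q^{n-d}$ times the indicator of the codimension-$n$ annihilator subspace (your $V$ is the paper's $E'$, just parametrised by blocks rather than by residues mod $n$), and then evaluate the $L^p$ norm directly to get $\|\widehat E\|_p \approx q^{n-d}q^{-n/p} = q^{-d}(\#E)^{1-1/p}$, which rules out $s>1/p$.
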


\begin{proof}
For $x = (x_1, \dots, x_d) \in \mathbb{F}_{q}^d \setminus\{0\}$
\begin{align*}
\widehat E (x) &= q^{-d}   \sum_{k   \in \mathbb{F}_{q}^n}\chi\big(-(k, \dots, k) \cdot (x_1, \dots, x_d)\big)  \\
&= q^{-d} \prod_{j=1}^{n} \sum_{k_j  \in \mathbb{F}_{q}}\chi\Big(-k_j \sum_{i \equiv j \textup{ (mod } n)} x_i\Big) \\
&= q^{n-d} \mathbf{1}_{E'}(x)
\end{align*}
where 
\[
E'= \bigcap_{j=1}^n \left\{  y=(y_1, \dots, y_d) \in \mathbb{F}_{q}^d :  \sum_{i \equiv j \textup{ (mod } n)} y_i= 0\right\}.
\]
Then, noting $\#E' = q^{n(m-1)}$ and $\#E = q^{n}$,
\begin{align*}
\|\widehat E \|_p  = \left(q^{-d} \sum_{x \in E'} q^{(n-d)p} \right)^{1/p} = q^{-d/p} q^{n(m-1)/p} q^{n-d} = q^{-d} (\#E)^{1-1/p}
\end{align*}
as required.
\end{proof}

The main example in the above is the case $n=1$, which was considered in \cite[Example 4.2]{iosevich}.  There it was shown that $E$ is not Salem but is generalised Salem in the case $d=2$.  In fact $E$ is not even generalised Salem for $d\geq 3$.  See Section \ref{gensalemsec} for a discussion of generalised Salem sets.
\begin{cor}
 Let
\[
E = \{ (k, \dots, k) : k \in \mathbb{F}_{q}\} \subseteq \mathbb{F}_{q}^d.
\]
Then $E$ is  $(p,  s )$-Salem if and only if $s \leq 1/p$.
\end{cor}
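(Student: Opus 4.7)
My plan is to deduce this corollary directly from the preceding proposition, specialised to $n = 1$ and $m = d$. The ``only if'' direction---that $E$ is not $(p,s)$-Salem for any $s > 1/p$---is precisely the content of that proposition in the case $n=1$, at least for $p \geq 2$. For the ``if'' direction in the range $p \in [2,\infty]$, I would simply invoke Corollary \ref{p1/p}, which asserts that every set is $(p,1/p)$-Salem in this range. Together these two ingredients settle the corollary for $p \in [2,\infty]$.

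To handle the remaining range $p \in [1,2)$, I would revisit the explicit computation inside the proof of the preceding proposition. With $n = 1$ one has $\widehat{E}(x) = q^{1-d}\mathbf{1}_{E'}(x)$, where $E' = \{y \in \mathbb{F}_q^d : \sum_i y_i = 0\}$ is a single hyperplane of cardinality $q^{d-1}$. The computation carried out there does not use $p \geq 2$ at any point; it yields
\[
\|\widehat{E}\|_p \approx q^{-d}(\#E)^{1-1/p}
\]
for every $p \geq 1$, with comparable upper and lower implicit constants. Since $\#E = q \to \infty$, comparing this with the defining inequality $\|\widehat{E}\|_p \lesssim q^{-d}(\#E)^{1-s}$ gives both directions of the equivalence $s \leq 1/p$ simultaneously: for $s \leq 1/p$ the bound is automatic, while for $s > 1/p$ one has $(\#E)^{s - 1/p} \to \infty$, which forbids the comparison.

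The main (and only) obstacle is really bookkeeping: one must notice that although the proposition was stated only for $p \geq 2$ (so as to pair cleanly with Corollary \ref{p1/p}), the equality $\|\widehat{E}\|_p \approx q^{-d}(\#E)^{1-1/p}$ produced inside its proof holds for all $p \geq 1$, and in fact furnishes both directions of the iff at once. No genuinely new estimate is needed.
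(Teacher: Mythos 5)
Your proposal is correct and follows essentially the same route as the paper: the corollary is the $n=1$ case of the preceding proposition, whose proof computes $\|\widehat E\|_p$ exactly for every finite $p\ge 1$, and that exact value $q^{-d}(\#E)^{1-1/p}$ immediately yields both directions of the equivalence (with the $p=\infty$ and $p\in[2,\infty]$ cases also covered by Corollary \ref{p1/p} as you note). Your observation that the computation is valid below $p=2$ is accurate but does not constitute a different argument.
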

The example above can be tweaked to obtain better Fourier behaviour.  This can be interpreted as adding curvature.

\begin{prop}
For $d \geq 2$, let
\[
E = \{ (k, \dots, k, k^{-1}) : k \in \mathbb{F}_{q}^*\} \subseteq \mathbb{F}_{q}^d.
\]
If $d=2$, then $E$ is Salem.   On the other hand, if    $d \geq 3$,  $E$ is $(p,2/p)$-Salem for all $p\geq 4$.
\end{prop}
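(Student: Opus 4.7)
The plan is to recognise the Fourier transform of $E$ as a classical Kloosterman sum. Writing $x = (x_1, \dots, x_d)$ and setting $a = x_1 + \cdots + x_{d-1}$ and $b = x_d$, a direct computation gives
\[
\widehat{E}(x) \;=\; q^{-d} \sum_{k \in \mathbb{F}_q^*} \chi\bigl(-ak - b k^{-1}\bigr) \;=\; q^{-d}\, K(-a,-b),
\]
where $K$ denotes the standard Kloosterman sum. Three regimes then control the size of $\widehat{E}(x)$: if $a=b=0$ the sum equals $q-1$; if exactly one of $a,b$ is zero, orthogonality of $\chi$ yields $K=-1$; and if $ab \neq 0$ the Weil bound gives $|K(a,b)| \leq 2\sqrt q$.

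For $d=2$ one has $(a,b)=(x_1,x_2)$, so the `bad' case $a=b=0$ only occurs at $x=0$, which is excluded from $\|\widehat{E}\|_\infty$. In the remaining cases $|\widehat{E}(x)| \lesssim q^{-3/2}$, which matches $q^{-d}\sqrt{\#E}$, so $E$ is Salem.

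For $d \geq 3$ I would partition $\mathbb{F}_q^d \setminus \{0\}$ according to which of $a,b$ vanish, count each stratum, and plug the three Kloosterman estimates above into $\|\widehat E\|_p^p$. The three non-trivial strata contain roughly $q^{d-2}$, $q^{d-1}$, and $q^d$ points respectively, and the corresponding contributions combine to
\[
\|\widehat E\|_p^p \;\lesssim\; q^{-p(d-1)-2} \;+\; q^{-pd-1} \;+\; q^{-pd+p/2}.
\]
Comparing the three exponents $-pd+p-2$, $-pd-1$, and $-pd+p/2$ shows the first dominates precisely when $p \geq 4$, giving $\|\widehat E\|_p \lesssim q^{-d+1-2/p}$. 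Since $\#E = q-1 \approx q$, this is exactly the $(p,2/p)$-Salem estimate.

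The only real obstacle is the hidden $a=b=0$ stratum for $d \geq 3$: it contains about $q^{d-2}$ non-zero points on which $|\widehat E(x)|$ is as large as $q^{-(d-1)}$, too big to match the Salem threshold $q^{-d+1/2}$. This is exactly why $E$ fails to be Salem once $d \geq 3$, and why the critical exponent is $p=4$; it is the value at which the sparse but large $a=b=0$ contribution overtakes the Weil-controlled bulk contribution from the $ab \neq 0$ regime.
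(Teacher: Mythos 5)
Your argument is correct and follows the same route as the paper: identify $q^d\widehat E(z)$ as the Kloosterman sum $K(-a,-b)$ with $a=z_1+\cdots+z_{d-1}$, $b=z_d$, apply the Weil bound when $ab\neq 0$, and sum over the three strata. You are in fact slightly more careful than the paper's write-up, which lumps the mixed stratum (exactly one of $a,b$ zero) in with the $a=b=0$ stratum under the crude bound $q^{1-d}$ rather than noting $K=-1$ there; but since that stratum is negligible either way, both arguments arrive at the same estimate and the same critical exponent $p=4$.
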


\begin{proof}
Let $p \geq 4$. For $z=(z_1, \dots ,z_d) \in \mathbb{F}_{q}^d$
\begin{align*}
q^d\widehat E (z)  =  \sum_{k \in \mathbb{F}_{q}^*} \chi(-(z_1+ \cdots + z_{d-1}) k -z_d k^{-1})
\end{align*}
is a Kloosterman sum.  Therefore,  applying e.g.~\cite[Theorem 5.45]{nied},
\[
|\widehat E (z)| \leq 2q^{-d} \sqrt{q} \approx q^{1/2-d}
\]
whenever $z_1+ \cdots + z_{d-1} \neq 0$ and $z_d \neq 0$ and $|\widehat E (z)| = q^{1-d}$ otherwise. The former condition will be satisfied for $\approx q^d$ many $z \in  \mathbb{F}_{q}^d$, and the latter condition will be satisfied for precisely $q^{d-2} -1$ many \emph{non-zero} $z \in  \mathbb{F}_{q}^d$.  Therefore,
\begin{align*}
\|\widehat E \|_p  \approx q^{-d/p}\left( q^dq^{(1/2-d)p} + \big( q^{d-2}-1 \big)q^{(1-d)p} \right)^{1/p} \approx   \left\{\begin{array}{cc} q^{-d} q^{1/2} &  d=2 \\
 q^{-d} q^{1-2/p} & d >2\end{array} \right.
\end{align*}
completing the proof.
\end{proof}

Replacing the Kloosterman sums in the previous result with more general character sums we obtain a new general class of Salem sets.

\begin{thm} \label{curves}
For $d \geq 2$, let
\[
E = \{ (f_1(k), \dots, f_d(k)) : k \in \mathbb{F}_{q}\} \subseteq \mathbb{F}_{q}^d
\]
for polynomials $f_1, \dots, f_d$ over $\mathbb{F}_{q}$.  Suppose $f_1, \dots, f_d$  span an $n$-dimensional subspace of $\mathbb{F}_{q}[k]$.  If $d >n$,  then $E$ is $(p,n/p)$-Salem for all $p \geq 2n$.  On the other hand, if  $d=n$, that is, $f_1, \dots, f_d$ are linearly independent polynomials, then $E$ is Salem.
\end{thm}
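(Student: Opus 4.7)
The plan is to mirror the Kloosterman-sum argument from the preceding proposition: realise $\widehat{E}(z)$ as a complete exponential sum of $\chi$ applied to a polynomial in $k$, then split into the part where this polynomial is constant (trivial evaluation) and the part where it is non-constant (Weil bound). For $z = (z_1, \ldots, z_d) \in \mathbb{F}_q^d$, set $P_z(k) := \sum_{j=1}^d z_j f_j(k) \in \mathbb{F}_q[k]$, so that
\[
q^d \widehat{E}(z) = \sum_{k \in \mathbb{F}_q} \chi(-P_z(k)).
\]
The linear map $L : z \mapsto P_z$ is a surjection from $\mathbb{F}_q^d$ onto $V := \mathrm{span}(f_1, \ldots, f_d)$, which is $n$-dimensional by hypothesis, so $\ker L$ has dimension $d - n$.

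Let $\mathcal{C} \subseteq \mathbb{F}_q^d$ be the preimage under $L$ of the constants in $V$; assuming $V$ contains no non-zero constants (as the Salem conclusion implicitly demands), $\mathcal{C}$ is exactly $\ker L$, of size $q^{d-n}$. For $z \in \mathcal{C}$ the inner sum reduces to $q\chi(-c)$, giving $|\widehat{E}(z)| = q^{1-d}$. For $z \notin \mathcal{C}$, the polynomial $P_z$ is non-constant of degree bounded by the $q$-independent constant $M := \max_j \deg f_j$, so Weil's theorem on exponential sums produces
\[
\Bigl|\sum_{k \in \mathbb{F}_q} \chi(-P_z(k))\Bigr| \lesssim \sqrt{q}, \qquad \text{hence} \qquad |\widehat{E}(z)| \lesssim q^{1/2 - d}.
\]
Since any non-constant $f_j$ yields uniformly bounded fibres for the parameterisation $k \mapsto (f_1(k), \ldots, f_d(k))$, we also have $\#E \approx q$.

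Inserting these two regimes into the definition of the $L^p$ norm gives
\[
\|\widehat{E}\|_p^p \approx q^{-d} \Bigl( q^{d-n} \cdot q^{(1-d)p} + q^d \cdot q^{(1/2-d)p} \Bigr),
\]
and so
\[
\|\widehat{E}\|_p \approx \max\!\Bigl( q^{-n/p} q^{1-d},\; q^{1/2 - d} \Bigr).
\]
For $p \geq 2n$ the first term dominates, producing $\|\widehat{E}\|_p \lesssim q^{-d}(\#E)^{1-n/p}$, which is the $(p, n/p)$-Salem property in the $d > n$ case. When $d = n$, the map $L$ is bijective, so $\mathcal{C} = \{0\}$; only the Weil regime contributes to the supremum, and $\|\widehat{E}\|_\infty \lesssim q^{1/2 - d} \approx q^{-d}\sqrt{\#E}$, which is precisely the Salem condition.

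The principal obstacle is a uniform application of Weil's theorem. The standard bound $|\sum_k \chi(P(k))| \leq (\deg P - 1)\sqrt{q}$ excludes polynomials of the form $h^p - h + c$ in characteristic $p$, so one must verify either that the parameterised family $\{P_z : z \notin \mathcal{C}\}$ avoids this pathology for the given $f_1, \ldots, f_d$, or that the exceptional parameters form a sparse set whose contribution can be absorbed. A secondary subtlety is the role of constants in $V$: were $V$ to contain the constant polynomial $1$, then $\mathcal{C}$ would gain an extra factor of $q$ in size and the bound would degrade by $q^{1/p}$, in analogy with the familiar fact that an affine line in $\mathbb{F}_q^d$ fails to be Salem.
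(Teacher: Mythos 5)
Your argument is essentially identical to the paper's proof: both write $q^d\widehat{E}(z)$ as the complete character sum $\sum_k \chi(\pm\sum_j z_j f_j(k))$, split the non-zero frequencies into the $q^{d-n}-1$ values of $z$ for which this polynomial is constant (giving $|\widehat{E}(z)|=q^{1-d}$) and the rest where Weil's theorem gives $|\widehat{E}(z)|\lesssim q^{1/2-d}$, and then evaluate the $L^p$ norm, with the first term dominating exactly when $p\geq 2n$. The two caveats you flag — the Artin--Schreier/degree-divisibility exceptions to Weil's bound and the possibility that the span contains non-zero constants — are also left implicit in the paper's proof, so your write-up is, if anything, slightly more careful than the original.
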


\begin{proof}
Let $p \geq 2n$. For $z=(z_1, \dots ,z_d) \in \mathbb{F}_{q}^d \setminus\{0\}$
\begin{align*}
q^d\overline{\widehat E (z)}  =  \sum_{k \in \mathbb{F}_{q}} \chi(z_1f_1(k)+ \cdots +  z_d f_d(k))
\end{align*}
is a character sum.   Therefore,  applying Weil's Theorem, e.g. \cite[Theorem 5.38]{nied},
\[
|\widehat E (z)| \leq 2q^{-d} \sqrt{q} \approx q^{1/2-d}
\]
whenever $ z_1f_1(k)+ \cdots + z_d f_d(k)$ is a non-trivial polynomial in $k$.  This will be the case for all but $q^{d-n}-1$ choices of non-zero $z \in  \mathbb{F}_{q}^d$.   On the other hand, if $ z_1f_1(k)+ \cdots + z_d f_d(k)$ is a trivial polynomial, then $|\widehat E (z)| = q^{1-d}$.  Therefore
\begin{align*}
\|\widehat E \|_p  \approx  q^{-d/p}\left( q^dq^{(1/2-d)p} + \big(q^{d-n}-1\big)q^{(1-d)p} \right)^{1/p} \approx   \left\{\begin{array}{cc} q^{-d} q^{1/2} &  d=n \\
 q^{-d} q^{1-n/p} & d >n\end{array} \right.
\end{align*}
completing the proof.
\end{proof}

A well-known and beautiful example falling under Proposition \ref{curves} is the \emph{Veronese curve}. 
\begin{cor}
The rational normal curve (or Veronese curve)
\[
\{(k, k^2, \dots , k^d) : k \in \mathbb{F}_q\}
\]
is a Salem set in $ \mathbb{F}_q^d$.
\end{cor}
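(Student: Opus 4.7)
The plan is to apply Theorem \ref{curves} directly, with $f_i(k) = k^i$ for $i = 1, \dots, d$. The Veronese curve is precisely $E = \{(f_1(k), \dots, f_d(k)) : k \in \mathbb{F}_q\}$, so all that is required is to verify the hypothesis that the polynomials $f_1, \dots, f_d$ are linearly independent in $\mathbb{F}_q[k]$. This puts us in the second case of Theorem \ref{curves}, i.e.\ $d = n$, which gives the Salem conclusion immediately.

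Verifying linear independence is essentially automatic: the monomials $k, k^2, \dots, k^d$ have pairwise distinct degrees, so any non-trivial linear combination $z_1 k + z_2 k^2 + \cdots + z_d k^d$ has degree equal to the largest $i$ with $z_i \neq 0$, and is in particular a non-zero polynomial. Hence $\{k^i\}_{i=1}^d$ spans a $d$-dimensional subspace of $\mathbb{F}_q[k]$, so $n = d$ in the notation of Theorem \ref{curves}.

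There is no real obstacle here; the work has already been done in the proof of Theorem \ref{curves}, which in turn rested on the Weil bound for character sums. The Veronese curve corollary is simply the cleanest instantiation of that theorem, since the monomial basis makes the linear independence transparent. One could, if desired, also observe that the same argument yields that the truncated Veronese curve $\{(k^{a_1}, \dots, k^{a_d}) : k \in \mathbb{F}_q\}$ for any distinct positive integers $a_1, \dots, a_d$ (coprime to the characteristic where needed for the Weil bound to apply in its stated form) is Salem, but this generalisation is not required for the stated corollary.
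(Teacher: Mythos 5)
Your proposal is correct and is exactly the paper's intended argument: the monomials $k, k^2, \dots, k^d$ have distinct degrees, hence are linearly independent, so $n=d$ and the second case of Theorem \ref{curves} applies directly. Nothing further is needed.
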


\section{Generalised Salem sets} \label{gensalemsec}

Iosevich--Rudnev \cite{iosevich}  also considered a weaker estimate   than \eqref{iosevichassumption}, that is, a condition  weaker than the Salem property.  They termed a set $E \subseteq \mathbb{F}_q^d$ a \emph{generalised Salem set} if for all $\eps>0$ and $t \in \mathbb{F}_q^*= \mathbb{F}_q \setminus \{0\}$
\begin{equation} \label{gensalem}
 \sum_{\substack{m \in \mathbb{F}_q^d:\\ |m|^2=t}} | \widehat E(m)|^2 \lesssim_{\eps} q^\eps q^{-3d/2-1} (\#E)^2.
\end{equation}
Here and below  we use the shorthand 
\[
|m|^2 =  m_1^2+\cdots + m_{d}^2 
\]
for $m =(m_1, \dots, m_d) \in \mathbb{F}_q^d$. In this section we briefly compare the generalised Salem property \eqref{gensalem} with our $L^p$ approach.  To narrow our focus, we first show that generalised Salem sets tend to be large.

\begin{prop}
If $E \subseteq \mathbb{F}_q^d$ with $d \geq 2$ satisfies $\#E \lesssim q^\beta$ for some $\beta<1$, then $E$  cannot be a generalised Salem set. 
\end{prop}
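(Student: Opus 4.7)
The plan is to sum the generalised Salem inequality \eqref{gensalem} over $t \in \mathbb{F}_q^\ast$ and compare the resulting bound against Plancherel's theorem \eqref{plancherel}. Summing the hypothesis gives
\[
\sum_{m \in \mathbb{F}_q^d \setminus S_0^{d-1}} |\widehat E(m)|^2 \lesssim_\eps q \cdot q^\eps q^{-3d/2-1}(\#E)^2 \approx q^\eps q^{-3d/2}(\#E)^2,
\]
since $\bigsqcup_{t\in\mathbb{F}_q^\ast}\{m : |m|^2=t\} = \mathbb{F}_q^d \setminus S_0^{d-1}$.

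Next I would bound the complementary sum from below. By Plancherel $\sum_m |\widehat E(m)|^2 = q^{-d}\#E$, so
\[
\sum_{m \in \mathbb{F}_q^d \setminus S_0^{d-1}} |\widehat E(m)|^2 = q^{-d}\#E - \sum_{m \in S_0^{d-1}} |\widehat E(m)|^2.
\]
To control the piece on the zero sphere I use the trivial estimate \eqref{trivial}, namely $|\widehat E(m)| \leq q^{-d}\#E$, together with the fact (recalled in Section \ref{sphereradiuszero}) that $\#S_0^{d-1} \lesssim q^{d-1}$ for all $d \geq 2$ and all $q$. This gives
\[
\sum_{m \in S_0^{d-1}} |\widehat E(m)|^2 \lesssim q^{-2d}(\#E)^2 \cdot q^{d-1} = q^{-d-1}(\#E)^2.
\]

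Since $\#E \lesssim q^\beta$ with $\beta<1$, we have $q^{-d-1}(\#E)^2 \lesssim q^{-1-d+2\beta} = o(q^{-d+\beta}) \ge o(q^{-d}\#E)$ as $q \to \infty$, so for $q$ large enough the sphere contribution is at most half of $q^{-d}\#E$ and hence
\[
\sum_{m \in \mathbb{F}_q^d \setminus S_0^{d-1}} |\widehat E(m)|^2 \gtrsim q^{-d}\#E.
\]
Combining with the first display, generalised Salem would force $q^{-d}\#E \lesssim_\eps q^\eps q^{-3d/2}(\#E)^2$, i.e.\ $q^{d/2-\eps} \lesssim \#E$, for every $\eps > 0$. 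Since $d/2 \geq 1 > \beta$, choosing $\eps \in (0, d/2 - \beta)$ yields $q^{d/2-\eps} \gg q^\beta \gtrsim \#E$, a contradiction for large $q$.

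The argument is quite direct and there is no real obstacle; the only point that requires a moment's thought is the size bound $\#S_0^{d-1} \lesssim q^{d-1}$, which must be checked uniformly across the three cases (namely $d \geq 3$, and $d = 2$ with $-1$ a square or not), and the verification that the hypothesis $\beta < 1$ is exactly what is needed to make the zero-sphere contribution negligible compared with the Plancherel mass $q^{-d}\#E$.
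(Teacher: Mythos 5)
Your argument is correct and essentially identical to the paper's: both use Plancherel to compute the total Fourier mass, bound the contribution of $S_0^{d-1}$ via the trivial estimate \eqref{trivial} and $\#S_0^{d-1} \lesssim q^{d-1}$, use $\beta<1$ to make that contribution negligible, and use $\beta < 1 \leq d/2$ for the final contradiction. The only cosmetic difference is that you sum the generalised Salem inequality over all $t\in\mathbb{F}_q^*$ to derive the contradiction, while the paper applies the pigeonhole principle to exhibit a single offending $t$; these are trivially equivalent reformulations.
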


\begin{proof}
	Let $S_0^{d-1} \subseteq \mathbb{F}_q^d$ denote the sphere of radius zero.  We use the estimate $\#S_0^{d-1} \lesssim q^{d-1}$, see \cite{covert}. Applying Plancherel's Theorem \eqref{plancherel}  we get
\begin{align*}
\sum_{t \in \mathbb{F}_q^*} \sum_{\substack{m \in \mathbb{F}_q^d:\\ |m|^2=t}} | \widehat E(m)|^2 &= q^{-d} \#E - \sum_{m\in S_0^{d-1}} | \widehat E(m)|^2 \\
&\geq  q^{-d} \#E - \#S_0^{d-1} q^{-2d} (\#E)^2\\
&\gtrsim q^{-d} \#E
\end{align*}
for sufficiently large $q$ since $\#S_0 \#E  \lesssim q^{d-1}q^\beta = o(q^d)$. Therefore, by the pigeon hole principle, it must hold that for some $t \in \mathbb{F}_q^*$,
\[
 \sum_{\substack{m \in \mathbb{F}_q^d:\\ |m|^2=t}} | \widehat E(m)|^2 \gtrsim \frac{1}{q^{d+1}} \#E .
\]
Therefore, by definition and using that $\# E \lesssim q^\beta$ and $\beta<1 \leq d/2$,  $E$ is not generalised Salem, as required.
\end{proof}

We aim to construct examples of sets with $\#E \gtrsim q^{d/2}$ which fail to be generalised Salem sets but do have non-trivial Fourier decay witnessed by our $L^p$  approach (e.g. $\alpha_E(p)>1/p$). We can do this by modifying an example of Iosevich--Rudnev, see  \cite[Example 4.4]{iosevich}.

Suppose $d$ is odd and $-1$ is a square in $\mathbb{F}_q$.  Then there exists an affine $k$-plane $E' \subseteq  \mathbb{F}_q^d$ with $k>d/2-1$ which is completely contained in a sphere $S_t^{d-1}$ for some $t \neq 0$.  It was shown in \cite[Example 4.4]{iosevich} that such $k$-planes exist and that
\begin{equation} \label{defofEE}
E = \{ x \in \mathbb{F}_q^d : x \cdot y = 0 \ \ \text{for all $y \in E'$}\}
\end{equation}
is not generalised Salem.  Indeed, 
\begin{align*}
| \widehat E(m)| &= \left\{\begin{array}{cc} q^{-d} \# E  &  m \in E' \\
 0 & m \notin E'\end{array} \right.  = \left\{\begin{array}{cc} q^{-k}   &  m \in E' \\
 0 & m \notin E'\end{array} \right. 
\end{align*}
and therefore
\[
 \sum_{\substack{m \in \mathbb{F}_q^d:\\ |m|^2=t}} | \widehat E(m)|^2  = q^{-k}
\]
but $\# E = q^{d-k}$.  It is natural to first try the set $E$ from \eqref{defofEE} itself; the hope being that even though  the spherical average over the sphere $S_t^{d-1}$ does not have good bounds,  when the average is taken over the whole space one might get a better bound.  Perhaps surprisingly, this does not work.

\begin{prop}
The set $E$ from  \eqref{defofEE}  is  $(p,s)$-Salem if and only if $s \leq 1/p$. That is, $E$ has the worst possible Fourier behaviour.
\end{prop}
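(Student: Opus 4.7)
The plan is to exploit the explicit description of $\widehat E$ that was already worked out in the paragraph immediately preceding the proposition, and simply compute $\| \widehat E \|_p$ as a function of $p$ directly. Recall we have
\[
|\widehat E(m)| = \begin{cases} q^{-k} & m \in E' \\ 0 & m \notin E' \end{cases}
\]
together with $\# E = q^{d-k}$, and $E'$ is an affine $k$-plane so $\# E' = q^k$. A key preliminary observation I would make is that $0 \notin E'$: since $E' \subseteq S_t^{d-1}$ for some $t \neq 0$, every point $y \in E'$ satisfies $|y|^2 = t \neq 0$, in particular $y \neq 0$. Hence the full sum $\sum_{m \in E'}$ coincides with the restricted sum $\sum_{m \in E' \setminus \{0\}}$ that appears in the definition of $\| \widehat E \|_p$.

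With those ingredients in hand, the computation is a one-liner. For $p \in [1,\infty)$,
\[
\| \widehat E \|_p^p = q^{-d} \sum_{m \in E'} q^{-kp} = q^{-d} \cdot q^k \cdot q^{-kp},
\]
so $\| \widehat E \|_p = q^{-d/p + k/p - k}$, while the $(p,s)$-Salem target quantity is $q^{-d}(\#E)^{1-s} = q^{-d + (d-k)(1-s)}$. Equating exponents and simplifying (using $d-k > 0$) yields the clean relation $s = 1/p$. The case $p = \infty$ follows immediately since $\| \widehat E \|_\infty = q^{-k} = q^{-d}(\# E)^{1 - 0}$ matches only the trivial $s = 0$, consistent with $1/p \to 0$.

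Combining the two bounds shows that $E$ is $(p,s)$-Salem precisely when $s \leq 1/p$, as claimed. Since Corollary \ref{p1/p} already guarantees that every set is $(p,1/p)$-Salem for $p \geq 2$, the content of the proposition is really the matching upper bound on $\alpha_E(p)$; and the main (and only) obstacle has been packaged into the pre-stated formula for $\widehat E$, so no further work is needed beyond the bookkeeping above. There is no genuine difficulty, and the whole proof amounts to verifying that the arithmetic of the exponents collapses to $s = 1/p$.
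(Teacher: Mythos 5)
Your proposal is correct and follows essentially the same route as the paper: both use the explicit formula for $\widehat E$ (constant modulus $q^{-k}$ on $E'$, vanishing elsewhere) and reduce the claim to matching exponents in $\|\widehat E\|_p = \bigl(q^{-d}\,q^{-kp}\,\#E'\bigr)^{1/p} = q^{-d}(\#E)^{1-1/p}$. Your extra remark that $0\notin E'$ (so the exclusion of the origin in the definition of $\|\cdot\|_p$ is harmless) is a small bookkeeping point the paper leaves implicit, but it does not change the argument.
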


\begin{proof}
Using that $\widehat E$   vanishes  on the complement of $E'$
\begin{align*}
\| \widehat E \|_p &=  \  \Bigg( \frac{1}{q^d}\sum_{x \in \mathbb{F}^d_q\setminus\{0\}}  |\widehat E(x)|^p \Bigg)^{1/p} \\
&=  \  \Bigg( \frac{1}{q^d}  q^{-kp} (\# E') \Bigg)^{1/p} \\
&= q^{-d} (\# E)^{1-1/p}
\end{align*}
and so $E$ is $(p,s)$-Salem if and only if $s \leq 1/p$. 
\end{proof}
We can improve the situation by taking a subset of $E$ with good decay properties and carefully chosen size.

\begin{prop}
Suppose $d$ is odd and $-1$ is a square in $\mathbb{F}_q$.  Then there exists $k>d/2-1$ such that for all $\alpha \in (0,1/2]$ there exists a set $F \subseteq \mathbb{F}_q^d$ such that $\#F \approx q^{d/2+1/2-\alpha}$, $F$ is not generalised Salem, but 
\[
\alpha_F(p) =  1/2 \wedge \frac{d-k}{p(d-k-\alpha)}.
\]
In particular, this  beats the trivial estimate $\alpha_F(p) \geq 1/p$ for all $p>2$.
\end{prop}

\begin{proof}
Let $E$ be as in  \eqref{defofEE} and  let $F \subseteq E$ be a set with cardinality $\#F \approx q^{d-k-\alpha}$  which is a weak Salem set when viewed as a subset of $E \equiv \mathbb{F}_q^{d-k}$.  Then 
\[
| \widehat F(m)| = q^{-d} \# F \approx q^{-(k+\alpha)} 
\]
for $m \in E'$ but we no-longer have that $\widehat F$ vanishes on the complement of $E'$.   However, the behaviour on $E'$ is enough for the bound 
\[
 \sum_{\substack{m \in \mathbb{F}_q^d:\\ |m|^2=t}} | \widehat F(m)|^2 \gtrsim \# (E') q^{-2(k+\alpha)}  = q^{-k-2\alpha}
\]
where $t \neq 0$ is such that $E' \subseteq S_t^{d-1}$. Combining this with
\[
q^{-3d/2-1} (\#F)^2 \approx q^{-3d/2-1+2(d-k-\alpha)} = q^{d/2-1-2k-2\alpha}
\]
and recalling that $d/2-1<k$, we find that  $F$ is not generalised Salem.   Recalling that $d$ is odd, suppose we have chosen $k$  as small as possible, namely $k=d/2-1/2$.  Therefore,  
\[
\#F \approx q^{d/2+1/2-\alpha} \geq q^{d/2}.
\]
Despite failing to be generalised Salem, $F$ still has some good Fourier decay properties captured by our  $L^p$  approach.  Indeed, observing the product structure of $F$ and applying Theorems \ref{products}, \ref{productsreverse} and \ref{productsreg}, one obtains the conclusion 
\[
\alpha_F(p) =  1/2 \wedge \frac{d-k}{p(d-k-\alpha)},
\]
as required.
\end{proof}

\section{Random sets:~how generic is Fourier decay?} \label{randomsec}

If we select a subset of $\mathbb{F}_q^d$ of a fixed size at random, do we expect good Fourier analytic behaviour?  To answer this question, let $\mathbb{P}$ denote the uniform probability measure on $\mathcal{P}_{q^\alpha}(\mathbb{F}_q^d)$, that is, the set consisting of all subsets of $\mathbb{F}_q^d$ of size $\lfloor q^{\alpha} \rfloor$ for fixed $\alpha \in (0,d)$.  With respect to $\mathbb{P}$, generic sets are \emph{weak Salem}, in the sense that  
\[
  \mathbb{P} \Big( \|\widehat X \|_\infty \leq 3q^{-d}q^{\alpha/2}\sqrt{d \log q}\Big) \to 1
\]
as $q \to \infty$.  This follows from a result of Hayes \cite[Theorem 1.13]{hayes} which    answered a question posed by Babai; see also \cite{randomsalem}.   Using monotonicity of $L^p$ norms, the above estimate implies that for all $p \in [2,\infty)$
\[
\mathbb{P} \Big( \|\widehat X \|_p \leq 3q^{-d}q^{\alpha/2}   \sqrt{d \log q} \Big) \geq   \mathbb{P} \Big( \|\widehat X \|_\infty \leq 3q^{-d}q^{\alpha/2}\sqrt{d \log q}\Big) \to 1.
\] 
We  prove a minor strengthening of this  estimate.  For example, it holds that 
\[
\mathbb{P} \Big( \|\widehat X \|_p \leq q^{-d}q^{\alpha/2} \log \log q \Big) \to 1
\]
as $q \to \infty$, but in fact one can replace the $\log \log q$ term with any increasing unbounded function.

\begin{thm} \label{genericsalem}
Let $C: \mathbb{Z}^+ \to (0,\infty)$ be an arbitrary non-decreasing function with $C(q) \to \infty$ as $q \to \infty$ and $p \in [2,\infty)$.  Then, for $q$ which are prime powers, 
\[
\mathbb{P} \Big( \|\widehat X \|_p > C(q) q^{-d}q^{\alpha/2} \Big)  = O(1/C(q)) \to 0 
\] 
as $q \to \infty$.
\end{thm}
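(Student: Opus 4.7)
The plan is to apply Markov's inequality to a suitable moment. Because the measure $\mu = q^{-d} \cdot \#$ on $\mathbb{F}_q^d \setminus \{0\}$ has total mass $(q^d-1)/q^d < 1$, Hölder's inequality yields $\|\widehat X\|_p \leq \|\widehat X\|_m$ whenever $p \leq m$, so it suffices to prove the conclusion for some fixed even integer $m \geq p$. With that reduction, the task becomes showing
\[
\mathbb{E}\bigl[\|\widehat X\|_m\bigr] \lesssim_m q^{-d} q^{\alpha/2},
\]
since Markov's inequality then gives
\[
\mathbb{P}\bigl(\|\widehat X\|_p > C(q)\, q^{-d} q^{\alpha/2}\bigr) \leq \frac{\mathbb{E}[\|\widehat X\|_m]}{C(q)\, q^{-d} q^{\alpha/2}} = O(1/C(q)).
\]

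To control the expectation, I would use Jensen's inequality together with Fubini to reduce to a pointwise moment estimate:
\[
\mathbb{E}\bigl[\|\widehat X\|_m\bigr] \leq \mathbb{E}\bigl[\|\widehat X\|_m^{m}\bigr]^{1/m} = \bigg(\frac{1}{q^d} \sum_{x \in \mathbb{F}_q^d \setminus \{0\}} \mathbb{E}\bigl[|\widehat X(x)|^{m}\bigr]\bigg)^{1/m}.
\]
Writing $N = \lfloor q^\alpha \rfloor$ and $S_x = q^d \widehat X(x) = \sum_{y \in X}\chi(-x \cdot y)$, the problem reduces to the uniform bound $\mathbb{E}[|S_x|^{m}] \lesssim_m N^{m/2}$ for every nonzero $x$, since then the displayed sum is $\lesssim_m q^{-md} N^{m/2}$ and the desired expectation bound drops out after taking the $m$-th root.

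The moment bound is the crux of the argument, and it is the main obstacle because sampling without replacement introduces dependence among the indicators $\mathbf{1}_X(y)$: a naive Hoeffding estimate using only the crude $\mathrm{Var}(S_x) \leq q^d$ would lose the sharp $N^{m/2}$ factor. I would obtain the required bound by expanding $|S_x|^{m} = S_x^{m/2}\overline{S_x}^{m/2}$ as a sum over $m$-tuples $(y_1, \ldots, y_m) \in (\mathbb{F}_q^d)^m$, bounding the hypergeometric factor $\mathbb{E}[\mathbf{1}_X(y_1) \cdots \mathbf{1}_X(y_m)]$ by $(N/q^d)^D$ where $D = \#\{y_1, \ldots, y_m\}$, and grouping tuples by their set-partition pattern. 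To each block of the pattern attach the signed count of indices it contains from $\{1, \ldots, m/2\}$ minus those from $\{m/2+1, \ldots, m\}$. Character orthogonality then forces the inner sum over placements to vanish (once $q$ exceeds $m$) unless every signed count is zero, i.e.\ the pattern is \emph{balanced}; in that case the inner sum is $q^{dD}$ and the total contribution of the pattern is at most $(N/q^d)^D q^{dD} = N^D \leq N^{m/2}$. Summing over the $O_m(1)$ balanced partitions yields the claim. As an alternative, one can invoke Hoeffding's 1963 comparison between sampling without and with replacement for convex functionals, and then apply a Marcinkiewicz--Zygmund inequality to the resulting sum of independent centred bounded summands (of total variance $O(N)$) to deduce the same bound.
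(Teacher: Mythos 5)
Your proposal is correct in spirit and arrives at the right moment estimate, but it follows a genuinely different route from the paper, and one of your two suggested proofs of the key moment bound has a technical gap worth flagging.

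On the overall structure, the reductions are sound and differ from the paper's: your observation that the measure has total mass $<1$ (so $\|\cdot\|_p \leq \|\cdot\|_m$ for $p\leq m$) validly reduces to even integer moments, and plain Markov's inequality does indeed give $O(1/C(q))$. The paper instead works with real $p$ throughout (via a conditional-expectation/martingale-style argument on $\mathbb{E}(|\sum_{n\leq N}y_n|^p)$) and derives the probability bound via a two-sided argument using Proposition~\ref{alwaysbound} rather than Markov; both yield the same asymptotic, and yours is slightly simpler at that step. The Jensen--Fubini reduction to pointwise moments and the target estimate $\mathbb{E}[|S_x|^m]\lesssim_m N^{m/2}$ coincide with the paper.

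The gap is in your route (a). You assert that ``character orthogonality forces the inner sum over placements to vanish unless the pattern is balanced.'' That is true for the unconstrained sum over all $m$-tuples consistent with the pattern (where it factors as $\prod_j\sum_{v_j}\chi(-c_j\,x\cdot v_j)$), but the sum you actually need---over tuples whose coincidence pattern is \emph{exactly} $\pi$, i.e.\ over injective assignments of distinct values to the blocks---does not vanish for unbalanced $\pi$. One must run a M\"obius inversion on the partition lattice of the $D$ blocks: the injective sum becomes $\sum_{\sigma}\mu(\hat 0,\sigma)\,q^{d|\sigma|}$ taken over coarsenings $\sigma$ in which every merged group has total signed count $0$. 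The saving grace is that in any such $\sigma$ every group pulls back to at least two indices of $\{1,\dots,m\}$ (a singleton group would be an unbalanced singleton block, contradiction), so $|\sigma|\leq m/2$, hence $|T(\pi)|\lesssim_m q^{dm/2}$. Combined with $|T(\pi)|\leq q^{d|\pi|}$ and $a_{|\pi|}\leq (N/q^d)^{|\pi|}$ this gives the contribution $\lesssim_m N^{|\pi|}\wedge N^{m/2}(N/q^d)^{|\pi|-m/2}\leq N^{m/2}$ in both the $|\pi|\leq m/2$ and $|\pi|>m/2$ regimes (using $N\leq q^d$). So the method-of-moments argument works, but it requires this extra M\"obius step that your sketch omits. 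Your alternative route (b)---Hoeffding's convex-ordering comparison between sampling with and without replacement, followed by Marcinkiewicz--Zygmund for the resulting i.i.d.\ centred bounded sum of variance $O(N)$---is clean and correct, and arguably makes more rigorous the passage from without-replacement to i.i.d.\ sampling that the paper's own proof glosses over.
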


\begin{proof}
Fix $p \in [2,\infty)$ throughout the proof. We first estimate the $p$th moments of the Fourier transform of a  random set $X$ chosen with respect to $\mathbb{P}$ evaluated at a fixed $z \in \mathbb{F}_q^d \setminus\{0\}$.  We get
\begin{align*}
\mathbb{E}( | \widehat X(z) |^p ) &=  \frac{1}{\binom{q^d}{\lfloor  q^\alpha \rfloor }} \sum_{X \in \mathcal{P}_{q^\alpha}(\mathbb{F}_q^d)} \bigg\lvert q^{-d} \sum_{x \in  X} \chi(-x \cdot z ) \bigg\rvert^p  \\
&\leq  q^{-dp}\mathbb{E} \bigg( \bigg\rvert \sum_{y \in Y} y \bigg\rvert ^p\bigg)
\end{align*}
where $Y$ is a multiset consisting of $\lfloor  q^\alpha \rfloor$ points  selected uniformly at random from $\chi(\mathbb{F}_q)$.  To justify the final inequality above, it is sufficient to recognise that the first expectation involves sampling from $\chi(\mathbb{F}_q)$ \emph{without replacement} and the second expectation is \emph{with replacement}.  The required inequality  then follows from   Hoeffding's comparison theorem for sampling without replacement \cite[Theorem 4]{hoeffding}.  The heuristic is that sampling with  replacement can only make things worse because we can choose the same point multiple times which leads to constructive interference, but if we sample without replacement then destructive interference becomes more likely.  In fact the two expectations are comparable up to multiplicative constants (they are even asymptotically equivalent as $q \to \infty$) but we do not require this.  Continuing,
\begin{equation} \label{equicircle}
\mathbb{E} \bigg( \bigg\rvert \sum_{y \in Y} y \bigg\rvert ^p\bigg) =  \mathbb{E} \bigg( \bigg\rvert \sum_{n=1}^{\lfloor  q^\alpha \rfloor} y_n \bigg\rvert ^p\bigg) \lesssim q^{\alpha p /2}
\end{equation}
 where $y_n$ are selected    uniformly at random from $\chi(\mathbb{F}_q)$.   To justify the final inequality in \eqref{equicircle} we estimate the following conditional expectation.  Let $m=\# \chi(\mathbb{F}_q)$, recalling that $\chi(\mathbb{F}_q)$ consists of the $m$th roots of unity.   For an integer $N$ and a large modulus  $r \geq 1$,
\begin{align*}
&\, \hspace{-0.8cm}\mathbb{E} \bigg( \Big\lvert \sum_{n=1}^{N} y_n \Big\rvert ^p \quad \bigg\vert  \quad \Big\rvert \sum_{n=1}^{N-1} y_n \Big\rvert ^p  \leq r \bigg)\\
&\leq \sup_{\theta \in [0,2\pi)} \frac{1}{m} \sum_{k=0}^{m-1} \Big\lvert r^{1/p}e^{i \theta} + e^{2\pi i k/m} \Big\rvert^{p}   \\ 
 &= \sup_{\theta \in [0,2\pi)} \frac{1}{m} \sum_{k=0}^{m-1} \Big(\big( r^{1/p}\cos(\theta) +\cos(2\pi k/m)\big)^2+ ( r^{1/p}\sin(\theta)+\sin(2\pi k/m))^2\Big)^{p/2}   \\
&= \sup_{\theta \in [0,2\pi)}  \frac{1}{m} \sum_{k=0}^{m-1} \big( r^{2/p} +2r^{1/p}\cos(2\pi k/m - \theta) +1\big)^{p/2}   \\
 &= r+ O(r^{1-2/p})
\end{align*}
as $r \to \infty$.  The final equality uses that 
\[
\sum_{k=0}^{m-1}  \cos( 2\pi k/m - \theta )  =0 ,
\]
for all $\theta \in [0,2\pi)$, and so the $O(r^{1-1/p})$ term drops out.  
 Therefore,  the estimate
\begin{equation} \label{uniest}
 \mathbb{E} \bigg( \bigg\rvert \sum_{n=1}^{N} y_n \bigg\rvert ^p\bigg) \leq K N^{\beta}
\end{equation}
holds for some $K\geq 1$ and $\beta \geq 1$ for all $N$  whenever
\[
K N^{\beta} + O(K^{1-2/p} N^{\beta(1-2/p)}) \leq K(N+1)^\beta = KN^\beta+K\beta N^{\beta-1}+ O(N^{\beta-2}).
\]
In particular, \eqref{uniest} will hold for some $K$ for all $N$ whenever $\beta \geq p/2$, which proves \eqref{equicircle}.  Therefore, we have the moment estimate
\[
\mathbb{E}( | \widehat X(z) |^p )  \leq c  q^{-dp}q^{\alpha p/2}
\]
for some $c \geq 1$ independent of $q$ and $z \in  \mathbb{F}_q^d \setminus\{0\}$ (but depending on $p$). With this moment estimate in hand, applying Jensen's inequality and Fubini's theorem,
\[
\Big(\mathbb{E}(\|\widehat X \|_p) \Big)^p \leq \mathbb{E}\bigg(q^{-d} \sum_{z \in  \mathbb{F}_q^d \setminus\{0\}} | \widehat X(z) |^p \bigg) = q^{-d} \sum_{z \in  \mathbb{F}_q^d \setminus\{0\}}\mathbb{E}( | \widehat X(z) |^p ) \leq c q^{-dp}q^{\alpha p/2}
\]
and so 
\begin{equation} \label{ifwegett}
\mathbb{E}(\|\widehat X \|_p) \leq c^{1/p} q^{-d}q^{\alpha /2}.
\end{equation}
Now let $C: \mathbb{Z}^+ \to (0,\infty)$ be an arbitrary non-decreasing function  and suppose
\[
\mathbb{P} \Big( \|\widehat X \|_p > C(q) q^{-d}q^{\alpha/2} \Big)  = \rho_q \in [0,1].
\]
Then, since $\|\widehat X \|_p \geq  0.5 q^{-d}q^{\alpha/2}$ for all $X$ with $\# X = \lfloor q^\alpha \rfloor$ (by Proposition \ref{alwaysbound} and for $q$ sufficiently large),
\[
\mathbb{E}( \|\widehat X \|_p  ) \geq \rho_q C(q) q^{-d}q^{\alpha/2} + 0.5(1-\rho_q)  q^{-d}q^{\alpha/2}.
\]
Combining this with \eqref{ifwegett} yields
\[
\rho_q \leq \frac{c^{1/p}-1/2}{C(q)-1/2}.
\]
Therefore if $C(q) \to \infty$ as $q \to \infty$, then $\rho_q = O(1/C(q)) \to 0$ as $q \to \infty$.
\end{proof}

\section{Sumsets}

 Sumsets are a central object of study  in additive combinatorics; see e.g.~\cite{taovu}. Given non-empty $E_1, \dots, E_k \subseteq \mathbb{F}_q^d$, the \emph{sumset} is defined as
\[
E_1+ \cdots + E_k = \{ x_1+ \cdots + x_k : x_i \in E_i\} \subseteq \mathbb{F}_q^d.
\]
A key problem is to relate the cardinality of the sumset with the cardinalities of the original sets.  Clearly one has the `trivial bounds' $\#(E_1+ \cdots + E_k) \geq \max_i \# E_i$ and 
\[
\#(E_1+ \cdots + E_k)  \lesssim q^{d} \wedge (\# E_1) \cdots (\# E_k)
\]
and these cannot be improved on in general. It is of particular interest to determine conditions under which  the  trivial  lower bound can be improved; especially by a polynomial factor, for example, verifying 
\begin{equation} \label{nontrivyay}
\#(E+E) \gtrsim (\# E)^{1+\eps}
\end{equation}
for some $\eps>0$.

One is often especially interested in the sum of just two sets $E_1+E_2$, or the sum of a set with itself $k$ times $kE = E+ \cdots + E$.  This latter case is sometimes referred to as an \emph{iterated sumset}.  Small sumsets, e.g.~those for which $\#(E_1+ \cdots + E_k) \lesssim \max_i \# E_i$ are often the result of additive structure present in the sets $E_i$ and so if one imposes conditions which forbid additive structure in some sense, then non-trivial lower bounds can sometimes be obtained.  We show that such assumptions can be effectively cast in the setting of our $L^p$ approach.

\begin{thm} \label{sumsets}
If  $E_1, \dots, E_k \subseteq \mathbb{F}_q^d$  are, respectively,   $(2p_k,s_k)$-Salem  for H\"{o}lder conjugates $p_i \geq 1$ with $\sum_{i=1}^k \frac{1}{p_i}= 1$, then
\[
\#(E_1+ \cdots + E_k)  \gtrsim q^{d} \wedge (\# E_1 )^{2s_1} \cdots (\#E_k)^{2s_k}.
\]
\end{thm}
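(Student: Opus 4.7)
The plan is a second-moment argument on the Fourier side.  Write $S = E_1 + \cdots + E_k$ and consider the $k$-fold convolution $F = \mathbf{1}_{E_1} * \cdots * \mathbf{1}_{E_k}$, which is supported on $S$ and satisfies $\sum_z F(z) = \prod_{i=1}^k (\#E_i)$.  Cauchy--Schwarz then yields
\[
\#S \geq \frac{\bigl(\sum_z F(z)\bigr)^2}{\sum_z F(z)^2} = \frac{\prod_{i=1}^k (\#E_i)^2}{\sum_z F(z)^2},
\]
so the task reduces to an upper bound on $\sum_z F(z)^2$.  With the paper's Fourier convention, convolution becomes pointwise product (with an extra factor of $q^d$), giving $\widehat F(x) = q^{d(k-1)} \prod_{i=1}^k \widehat{E_i}(x)$, and Plancherel \eqref{plancherel} converts $\sum_z F(z)^2$ into $q^{d(2k-1)}\sum_x \prod_{i=1}^k |\widehat{E_i}(x)|^2$.

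Next I would split the $x$-sum into the zero frequency and $\mathbb{F}_q^d \setminus \{0\}$.  Using $|\widehat{E_i}(0)| = q^{-d}\#E_i$ (see \eqref{00}), the $x=0$ term, after multiplying by the prefactor $q^{d(2k-1)}$, contributes $q^{-d}\prod_i (\#E_i)^2$; this will later produce the $q^d$ ceiling in the claimed minimum.  For the off-origin part, H\"older's inequality with the conjugate exponents $p_1, \dots, p_k$ gives
\[
\sum_{x\neq 0} \prod_{i=1}^k |\widehat{E_i}(x)|^2 \leq \prod_{i=1}^k \Bigl(\sum_{x\neq 0} |\widehat{E_i}(x)|^{2p_i}\Bigr)^{1/p_i} = q^{d} \prod_{i=1}^k \|\widehat{E_i}\|_{2p_i}^2,
\]
where the final equality uses $\sum_i 1/p_i = 1$ and the definition \eqref{lpnorm} of $\|\widehat{E_i}\|_{2p_i}$.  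Inserting the $(2p_i, s_i)$-Salem hypothesis $\|\widehat{E_i}\|_{2p_i} \lesssim q^{-d}(\#E_i)^{1-s_i}$ bounds the non-zero contribution by $q^{d-2dk}\prod_i (\#E_i)^{2(1-s_i)}$.

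Combining the two frequency contributions, the Cauchy--Schwarz ratio becomes
\[
\#S \gtrsim \frac{\prod_i (\#E_i)^2}{q^{-d}\prod_i (\#E_i)^2 \; + \; \prod_i (\#E_i)^{2(1-s_i)}} \gtrsim q^d \wedge \prod_{i=1}^k (\#E_i)^{2s_i},
\]
via the elementary inequality $A/(B+C) \gtrsim \min(A/B, A/C)$.  The only real subtlety is bookkeeping: one must track the $q^{d(k-1)}$ factor from the Fourier--convolution identity and the $q^{-d}$ factor inside the definition of $\|\cdot\|_p$ consistently, and crucially separate the $x=0$ term before applying H\"older, since the trivial value $|\widehat{E_i}(0)| = q^{-d}\#E_i$ saturates any Salem-type estimate and would otherwise wash out the hypothesis at non-zero frequencies.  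Once that separation is made, the rest is a standard second-moment computation.
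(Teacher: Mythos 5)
Your proposal is correct and follows essentially the same route as the paper: define the $k$-fold convolution supported on the sumset, apply Cauchy--Schwarz to bound $\#(E_1+\cdots+E_k)$ from below by the ratio of the square of the first moment to the second moment, convert the second moment via Plancherel to a Fourier-side sum, split off the $m=0$ term (which produces the $q^d$ cap), and apply H\"older with the conjugate exponents $p_i$ to the off-origin sum before inserting the $(2p_i,s_i)$-Salem hypotheses. The bookkeeping of the $q^{d(k-1)}$ convolution factor, the $q^{-d}$ inside the $L^p$ norm, and the $\sum_i 1/p_i = 1$ identity all check out.
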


\begin{proof}
We generalise a  counting technique we learned form \cite[Lemma 3.1]{directions}. Define a function $f: \mathbb{F}_q^d \to \mathbb{R}$ by
\[
f(z) = \sum_{x_1+\cdots + x_k=z} \mathbf{1}_{E_1}(x_1) \cdots \mathbf{1}_{E_k}(x_k).
\]
Then
\begin{equation} \label{summing}
\sum_{z \in \mathbb{F}_q^d} f(z) = ( \# E_1 ) \cdots (\# E_k),
\end{equation}
\begin{equation} \label{countingg}
\#\{z : f(z) \neq 0\} = \#(E_1+ \cdots + E_k),
\end{equation}
and
\begin{align} \label{fourrr}
\widehat{ f} (m) &= q^{-d} \sum_{x_1,\dots, x_k\in\mathbb{F}_q^d  } \chi((x_1+\cdots + x_k) \cdot m) \mathbf{1}_{E_1}(x_1) \cdots \mathbf{1}_{E_k}(x_k) \nonumber \\
&= q^{(k-1)d} |\widehat{E_1}(m) | \cdots |\widehat{E_k}(m) |.
\end{align}
Therefore,  
\begin{align*}
(\# E_1 )^2 \cdots (\# E_k)^2 &= \Bigg( \sum_{z \in \mathbb{F}_q^d} f(z) \Bigg)^2 \qquad \text{(by   \eqref{summing}) }\\
&\leq \#( E_1+ \cdots + E_k) \, \sum_{z \in \mathbb{F}_q^d}  f(z)^2\qquad \text{(by Jensen's inequality and \eqref{countingg}) }\\
&= \#( E_1+ \cdots + E_k) \, q^d\sum_{m \in \mathbb{F}_q^d}  |\widehat{ f} (m)|^2 \qquad \text{(by Plancherel's Theorem \eqref{plancherel})}\\
&=  \#( E_1+ \cdots + E_k) \, q^{(2k-1)d} \sum_{m \in \mathbb{F}_q^d}  |\widehat{E_1}(m) |^2 \cdots |\widehat{E_k}(m) |^2\qquad \text{(by   \eqref{fourrr}) }\\
&=  \#( E_1+ \cdots + E_k) \, q^{(2k-1)d}  |\widehat{E_1}(0) |^2 \cdots |\widehat{E_k}(0) |^2\\
&\, \hspace{2cm} + \#( E_1+ \cdots + E_k) \, q^{2kd} q^{-d}\sum_{ m \in \mathbb{F}_q^d\setminus\{0\} }  |\widehat{E_1}(m) |^2 \cdots |\widehat{E_k}(m) |^2\\
&\lesssim \#( E_1+ \cdots + E_k)\, q^{-d} (\#E_1 )^2 \cdots (\#E_k)^2\\
&\, \hspace{2cm} + \#( E_1+ \cdots + E_k)\, q^{2kd} \|\widehat{E_1}\|_{2p_1}^2 \cdots \|\widehat{E_k}(m) \|_{2p_k}^2\\
&\, \hspace{6cm}  \text{(by \eqref{00} and H\"{o}lder's inequality)}\\
&\lesssim  \#( E_1+ \cdots + E_k) \, q^{-d} (\# E_1 )^2 \cdots (\# E_k)^2\\
&\, \hspace{2cm} +\#( E_1+ \cdots + E_k)\, (\# E_1 )^{2(1-s_1)} \cdots (\#E_k)^{2(1-s_k)}\\
\end{align*}
by assumption, and so
\[
\#( E_1+ \cdots + E_k)  \gtrsim q^{d} \wedge (\# E_1 )^{2s_1} \cdots (\#E_k)^{2s_k}
\]
as required.
\end{proof}

Specialising to the case of iterated sumsets, we get the following corollary.  In particular, this provides an (analytic) sufficient condition   for  a set $E$ to generate $\mathbb{F}_q^d$ as an additive semigroup (a purely algebraic conclusion). This result needs $\gtrsim q^\eps$ many generators which may seem inefficient but, for comparison, there are sets as big as $\approx q^{d-1/2}$ which do not generate.  One way of interpreting this result is that if polynomially many points are sufficiently random, then they must generate the whole space.

\begin{cor}
If  $E \subseteq \mathbb{F}_q^d$   is $(2k,s_k)$-Salem, then
\[
\#(kE)  \gtrsim q^{d} \wedge (\# E )^{2ks_k}.
\]
In particular, if   $E$ is $(2k,s_k)$-Salem for some  $k$ with
\begin{equation} \label{growthcondition}
 2 k s_k \geq  \frac{(d-\gamma)\log q}{\log \# E}
\end{equation}
for some $0 \leq \gamma<1/2$, then, for sufficiently large $q$,
\[
kE = \mathbb{F}_q^d
\]
for sufficiently large $k$. (Note that \eqref{growthcondition} is guaranteed for some $k$ for $E$ with $\# E \approx q^\beta$ provided $\alpha_E(p)>\frac{d-1/2}{p\beta}$ for some $p>2$.)
\end{cor}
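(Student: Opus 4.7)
The first inequality is a direct application of Theorem \ref{sumsets}: I take $E_1 = \cdots = E_k = E$, choose the H\"older conjugates $p_i = k$ so that $\sum_{i=1}^{k} 1/p_i = 1$, and set $s_i = s_k$ for each $i$. The hypothesis supplies the $(2p_i, s_i)$-Salem condition demanded by the theorem, and its conclusion reduces to
\[
\#(kE) \gtrsim q^d \wedge (\#E)^{2s_1 + \cdots + 2s_k} = q^d \wedge (\#E)^{2ks_k}.
\]

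For the second assertion, the growth condition rearranges to $(\#E)^{2ks_k} \geq q^{d-\gamma}$, so the first part gives $\#(kE) \gtrsim q^{d-\gamma}$. This falls short of the equality $kE = \mathbb{F}_q^d$ by a factor of $q^\gamma$, and my plan to close the gap combines a Fourier-analytic structural step (ruling out $E$ living in a proper affine subspace) with a soft semigroup step (upgrading the span of $E$ to an iterated sumset).

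For the structural step, suppose $E \subseteq W + a$ for some proper additive subspace $W \subseteq \mathbb{F}_q^d$. Then $|\widehat E(m)| = q^{-d}\#E$ for every $m$ in the annihilator $W^\perp$, giving the lower bound $\|\widehat E\|_{2k} \gtrsim q^{-d}\#E \cdot q^{-(d-1)/(2k)}$. Confronted with the $(2k, s_k)$-Salem upper bound $\|\widehat E\|_{2k} \lesssim q^{-d}(\#E)^{1-s_k}$, this forces $2ks_k\log\#E \lesssim (d-1)\log q$, contradicting the growth condition $2ks_k\log\#E \geq (d-\gamma)\log q$ for large $q$ since $\gamma < 1/2 < 1$.

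For the semigroup step, translate so that $0 \in E$; then the chain $E \subseteq 2E \subseteq 3E \subseteq \cdots$ is increasing inside the finite abelian group $\mathbb{F}_q^d$ and stabilizes at the subsemigroup generated by $E$. A standard cancellation argument shows that any finite subsemigroup of a finite abelian group is automatically a subgroup, so the stable set is precisely the additive span of $E$, which by the structural step is all of $\mathbb{F}_q^d$. Hence $jE = \mathbb{F}_q^d$ for every sufficiently large $j$, and translating back recovers the conclusion for the original $E$. The main obstacle will be the structural step: when $q$ is a non-trivial prime power the relevant additive subgroups of $\mathbb{F}_q^d$ are $\mathbb{F}_p$-subspaces (with $p$ the characteristic), so the Fourier computation must be carried out over the prime subfield, and keeping the resulting numerics sharp enough to match the threshold $\gamma < 1/2$ is the delicate point of the argument.
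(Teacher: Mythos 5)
Your proof of the first claim is correct and identical to the paper's: apply Theorem \ref{sumsets} with $E_1 = \cdots = E_k = E$ and $p_i = k$.

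For the second claim you take a genuinely different route from the paper, and it has a real gap. The paper's argument is purely set-theoretic: from the growth condition and the first claim it deduces $\#(kE) > q^{d-1/2}$ for $q$ large, and then argues that this exceeds the cardinality of any proper additive subgroup of $\mathbb{F}_q^d$, so the stabilisation of the chain $kE \subseteq 2kE \subseteq \cdots$ (which is a coset of a subgroup) must be all of $\mathbb{F}_q^d$. The threshold $\gamma < 1/2$ is calibrated precisely against this subgroup-cardinality bound. Your alternative plan is to attack the problem Fourier-analytically, showing directly that $E$ cannot sit in a proper coset $W + a$. Your semigroup step is fine (a nonempty sub-semigroup of a finite abelian group is a subgroup by the standard cancellation argument, and translating $E$ to contain $0$ replaces $kE$ by a translate, which does not affect whether it equals $\mathbb{F}_q^d$). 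But your Fourier computation only gives the gain $q^{-(d-1)/(2k)}$ because you implicitly took $\#W^\perp \approx q$, which is correct only when $W$ is an $\mathbb{F}_q$-hyperplane. The subgroup generated by $E$ can be an arbitrary $\mathbb{F}_p$-subspace of $\mathbb{F}_q^d$ (with $q = p^m$); if it has codimension $j$ over $\mathbb{F}_p$ then $\#W^\perp = p^j$, and running your inequality gives only $(p^j - 1)(\#E)^{2ks_k} \lesssim q^d$, hence $p^j \lesssim q^\gamma$. This is no contradiction at all (for instance $p = 2$, $j = 1$ is entirely consistent with the growth hypothesis). You correctly identify this as "the delicate point of the argument," but acknowledging the obstacle is not the same as overcoming it; as written the structural step fails for non-prime $q$, so the proof is incomplete. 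The paper's direct cardinality comparison is simpler and avoids the Fourier obstruction entirely, though it too leans on a careful accounting of sizes of $\mathbb{F}_p$-subspaces of $\mathbb{F}_q^d$.
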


\begin{proof}
The first claim is an immediate consequence of Theorem \ref{sumsets}. For the second claim, it immediately follows from  \eqref{growthcondition} and the first claim that
\[
\# (kE) \gtrsim q^{d-\gamma}.
\]
Therefore, we may assume $q$ is sufficiently large such that 
\begin{equation} \label{avoids}
\# (kE) > q^{d-1/2} = \#(\mathbb{F}_q^{d-1}) \, \sqrt{q}.
\end{equation}
Additive subgroups of $\mathbb{F}_q$ for $q=p^m$ must have cardinality $p^n$ for $n \vert m$ and so the largest proper additive subgroup of $\mathbb{F}_q$  has cardinality less than $\sqrt{q}$.  Therefore by \eqref{avoids} $kE$ contains no proper additive subgroups and therefore $k'kE = \mathbb{F}_q^d$ for sufficiently large $k'$, proving the claim.
\end{proof}

Next we state a simple corollary concerning sumsets,  difference sets, and directions sets.  Recall the \emph{difference set} of $E\subseteq \mathbb{F}_q^d$ is defined by
\[
E-E = \{ x-y : x,y \in E\} = E+(-E)
\]
and  the \emph{direction set} of $E\subseteq \mathbb{F}_q^d$ is defined by
\[
\mathrm{Dir}(E) =  \big((E-E)\setminus\{0\}\big) /\sim
\]
where $\sim$ is an equivalence relation defined on  $\mathbb{F}_q^d\setminus \{0\}$ given by $u\sim v$ if and only if $u,v$ are in a common 1-dimensional subspace of $\mathbb{F}_q^d$.  In particular, $\#\mathrm{Dir}(E) $ is the number of  distinct directions determined by pairs of points in $E$. 

\begin{cor} \label{sumsetgood}
If  $E \subseteq \mathbb{F}_q^d$   is $(4,s)$-Salem, then
\[
\# (E+E) \gtrsim (\# E)^{4s} \wedge q^d,
\]
\[
\# (E-E) \gtrsim (\# E)^{4s} \wedge q^d
\]
and
\[
\# \mathrm{Dir}(E)  \gtrsim \frac{(\# E)^{4s}}{q} \wedge q^{d-1}.
\]
\end{cor}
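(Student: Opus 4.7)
The plan is to derive all three estimates as essentially immediate consequences of Theorem \ref{sumsets}, with only a short elementary reduction needed for the direction set.

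First I would handle $\#(E+E)$ by applying Theorem \ref{sumsets} with $k=2$, $E_1 = E_2 = E$, and H\"older conjugates $p_1 = p_2 = 2$. The hypothesis $\tfrac{1}{p_1}+\tfrac{1}{p_2}=1$ is satisfied, and being $(2p_i,s_i)$-Salem with $s_i = s$ reduces in both slots to the given $(4,s)$-Salem hypothesis on $E$. The conclusion reads
\[
\#(E+E) \gtrsim q^d \wedge (\#E)^{2s}(\#E)^{2s} = q^d \wedge (\#E)^{4s}.
\]

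For $\#(E-E)$ I would write $E-E = E + (-E)$, where $-E = \{-x : x \in E\}$. Since $\widehat{-E}(m) = \widehat{E}(-m)$, the substitution $m \mapsto -m$ shows $\|\widehat{-E}\|_p = \|\widehat{E}\|_p$ for every $p \in [1,\infty]$, and $\#(-E) = \#E$, so $-E$ is also $(4,s)$-Salem. A second application of Theorem \ref{sumsets}, now with $E_1 = E$ and $E_2 = -E$, yields the identical lower bound.

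For $\#\mathrm{Dir}(E)$ I would use that the equivalence relation $\sim$ partitions $\mathbb{F}_q^d \setminus \{0\}$ into classes of cardinality $q-1$ (the non-zero points of a $1$-dimensional subspace). Hence at most $q-1$ elements of $(E-E) \setminus \{0\}$ can represent the same direction, giving
\[
\#\mathrm{Dir}(E) \geq \frac{\#(E-E) - 1}{q-1} \gtrsim \frac{\#(E-E)}{q}.
\]
Combining this with the difference set bound just established produces
\[
\#\mathrm{Dir}(E) \gtrsim \frac{(\#E)^{4s} \wedge q^d}{q} = \frac{(\#E)^{4s}}{q} \wedge q^{d-1}.
\]
There is essentially no conceptual obstacle here: the result is a direct specialization of Theorem \ref{sumsets} together with standard direction-counting bookkeeping. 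The only care required is the routine verification that $-E$ inherits the $(4,s)$-Salem property of $E$, which falls out from the change of variable in the Fourier transform.
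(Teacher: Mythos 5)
Your proposal is correct and follows essentially the same route as the paper: the first claim is a direct specialization of Theorem \ref{sumsets}, the second uses the substitution $E_2=-E$ (which the paper explicitly offers as an alternative to redefining the counting function $f$), and the third is the same pigeonhole count over the $(q-1)$-element direction classes. Your verification that $-E$ inherits the $(4,s)$-Salem property is a worthwhile detail the paper leaves implicit.
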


\begin{proof}
The first claim follows immediately from Theorem \ref{sumsets}.  The second claim is proved similarly setting $k=2$ and defining $f$ by 
\[
f(z) = \sum_{x_1- x_2=z} E(x_1)  E(x_2).
\]
Alternatively, one may simply set $E_1=E$ and $E_2=-E$. The third claim then follows since $\# \mathrm{Dir}(E)  \geq q^{-1} \# (E-E) $ by the pigeonhole principle.
\end{proof}

 The trivial (but sharp) bounds are:
\[
\# (E+E) \gtrsim  (\# E),
\]
\[
\# (E-E) \gtrsim  (\# E)
\]
and
\[
\# \mathrm{Dir}(E)  \gtrsim q^{-1}\# E  
\]
and it is only meaningful to consider sets $E$ with $\# E = o(q^d)$.  These trivial bounds can be achieved by highly structured sets, such as arithmetic progressions. In particular, we see that  non-trivial bounds analogous to \eqref{nontrivyay} are obtained in all cases in Corollary \ref{sumsetgood}  whenever $\alpha_E(4)>1/4$.  This is rather precise information, since \emph{all} sets satisfy $\alpha_E(4)\geq 1/4$, see Corollary \ref{p1/p}.  Moreover, it is possible to obtain optimal results if  $\alpha_E(4)=1/2$.  Indeed, for such sets, Corollary \ref{sumsetgood} gives
\[
\# (E+E) \approx \#(E-E) \approx   (\# E)^2\wedge q^d .
\]
In fact, it is possible that  $\alpha_E(4)=1/2$ even when  $\alpha_E(\infty)=0$; see e.g.~Theorem \ref{curves} or Corollary \ref{sidonproperty} later.  This shows that the averaging method we use is rather more powerful than simply asking for uniform control of the Fourier transform.

 \section{Sidon sets}

The convolution formula allows one to express the Fourier transform of a convolution as the product of the Fourier transforms.  The   convolution $f \ast f$ of a function $f$ supported on a set $E \subseteq \mathbb{F}_q^d$  is a function supported on $E+E$.  Therefore one might imagine that the convolution formula allows one to relate the Fourier analytic properties of $E$ and $E+E$ directly.  In fact this is not possible because the convolution $\textbf{1}_E \ast \textbf{1}_E$ is not $\textbf{1}_{E+E}$ unless $E$ is a single point.  For $y \in E+F$ let
 \[
g(y) = \#\{ (u,v) \in E \times F \ : \ u+v = y\} \geq 1
 \]
 denote the size of the $y$-fibre.  Naively computing the Fourier transform of $E+F$ one obtains
   \begin{align*}
\lvert \widehat{ E+F}(x) \rvert = q^{-d} \Big\lvert \sum_{y \in \mathbb{F}_q^d} \chi(-x \cdot y) \mathbf{1}_{E+F}(y)  \Big\rvert 
  =q^{-d} \Big\lvert  \sum_{u \in E  }\sum_{v \in   F} \frac{\chi(-x \cdot u) \chi(-x \cdot  v) }{ g(u+v)} \Big\rvert 
    \end{align*}
    and then one might hope to estimate this in terms of 
\[
q^{-d}  \Big\lvert \sum_{u \in E  }   \chi(-x \cdot u)  \Big\rvert  \,  \Big\lvert  \sum_{v \in   F} \chi(-x \cdot  v)  \Big\rvert  = q^d |\widehat E(x) | \, |\widehat F(x)|.
\]
However, to have any hope there would have to be an estimate  of the form
\[
 \Big\lvert  \sum_i c_i z_i\Big\rvert   \lesssim_{\{c_i\}}  \Big\lvert  \sum_i  z_i\Big\rvert  
\]
where $c_i \geq 0 $ are weights and $z_i$ are points on the circle.  However, there clearly cannot be any such estimate in general.

Despite the above, we can make progress if $E$ is a Sidon set and we consider $E+E$.  Recall that $E$ is a \emph{Sidon set} if $g(y)$ is as small as it can be for all  $y \in E+F$, that is, $g(x+x)=1$ and $g(x+y) = 2$ for distinct $x,y \in E$. The value 2 is to account for the unavoidable coincidence $x+y=y+x$.   In particular, for Sidon sets  $\textbf{1}_E \ast \textbf{1}_E$ is very close to  $\textbf{1}_{E+E}$.  Recall that if $E \subseteq \mathbb{F}_q^d$ is Sidon, then $\#E \lesssim q^{d/2}$ and that Sidon sets with  $\#E \gtrsim q^{d/2}$ exist.  

\begin{thm} \label{sidonthm}
 If  $E \subseteq \mathbb{F}_q^d$ is  a Sidon set, then
 \[
2^{-1}q^d  \|  \widehat E \|_{2p}^2  - q^{-d} (\# E) \leq   \| \widehat{ E+E} \|_p  \lesssim   q^d  \|  \widehat E \|_{2p}^2. 
 \]
 In particular, if $\#(E+E)  \leq c q^d$ for some $c \in (0,1)$, then 
 \[
  \| \widehat{ E+E} \|_p  \approx_c  q^d  \|  \widehat E \|_{2p}^2 .
 \]
\end{thm}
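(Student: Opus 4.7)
The plan is to exploit the Sidon hypothesis to turn the heuristic ``$\mathbf{1}_E \ast \mathbf{1}_E \approx \mathbf{1}_{E+E}$'' suggested by the discussion preceding the theorem into an exact pointwise identity. For a Sidon set, the fibre $g(y) = \#\{(u,v) \in E \times E : u+v = y\}$ equals $1$ when $y = 2x$ for some $x \in E$ (the only solution being $(x,x)$, by the Sidon property) and equals $2$ for every other $y \in E+E$ (the pair $(u,v)$ and its transpose $(v,u)$ with $u \neq v$). Hence
\[
\mathbf{1}_E \ast \mathbf{1}_E \, = \, 2\,\mathbf{1}_{E+E} \, - \, \mathbf{1}_{2E}.
\]
Taking Fourier transforms, using the convolution formula $\widehat{f \ast g} = q^d \widehat f\, \widehat g$ together with the easily checked identity $\widehat{2E}(x) = \widehat E(2x)$, this becomes the pointwise identity
\[
2\,\widehat{E+E}(x) \, = \, q^d\,\widehat{E}(x)^2 \, + \, \widehat E(2x).
\]

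Taking $L^p$ norms of both sides over $\mathbb{F}_q^d \setminus \{0\}$, applying Minkowski's inequality in both directions, and noting $\|\widehat E^2\|_p = \|\widehat E\|_{2p}^2$ together with $\|\widehat E(2\,\cdot)\|_p = \|\widehat E\|_p$ (since $y \mapsto 2y$ permutes $\mathbb{F}_q^d \setminus \{0\}$ for $q$ odd; in characteristic $2$ a Sidon set has at most one element, so there is nothing to prove) yields
\[
\bigl| \, 2 \|\widehat{E+E}\|_p \, - \, q^d \|\widehat E\|_{2p}^2 \, \bigr| \, \leq \, \|\widehat E\|_p.
\]
The claimed lower bound now follows at once from the trivial estimate $\|\widehat E\|_p \leq \|\widehat E\|_\infty \leq q^{-d}(\#E)$. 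For the upper bound $\|\widehat{E+E}\|_p \lesssim q^d \|\widehat E\|_{2p}^2$, the error term $\|\widehat E\|_p$ must be absorbed into the leading one; this is straightforward because Plancherel's Theorem \eqref{plancherel} combined with the Sidon bound $\#E \lesssim q^{d/2}$ gives $q^d \|\widehat E\|_{2p}^2 \geq q^d \|\widehat E\|_2^2 \approx q^{-d}(\#E) \geq \|\widehat E\|_p$.

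For the ``in particular'' assertion, I use Proposition \ref{alwaysbound} applied to the set $E+E$: the hypothesis $\#(E+E) \leq c q^d$ and the Sidon identity $\#(E+E) = \#E(\#E+1)/2 \approx (\#E)^2$ together yield
\[
\|\widehat{E+E}\|_p \, \geq \, \sqrt{1-c}\, q^{-d} \sqrt{\#(E+E)} \, \gtrsim_c \, q^{-d}(\#E) \, \geq \, \|\widehat E\|_p,
\]
so the error term is dominated by $\|\widehat{E+E}\|_p$ with a constant depending on $c$. Substituting this back into $q^d \|\widehat E\|_{2p}^2 \leq 2\|\widehat{E+E}\|_p + \|\widehat E\|_p$ gives the reverse bound $q^d \|\widehat E\|_{2p}^2 \lesssim_c \|\widehat{E+E}\|_p$ and completes the equivalence. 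The only conceptually non-trivial step is recognising the clean convolution identity with the $\mathbf{1}_{2E}$ correction; the rest is routine application of Minkowski's inequality, Plancherel's Theorem, and the trivial $L^\infty$ bound.
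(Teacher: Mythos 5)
Your proof is correct, and in essence it follows the same route as the paper's, but with a cleaner and in fact slightly tighter packaging. The paper works directly with the character sum $\sum_{y\in E+E}\chi(-x\cdot y)$, splits it over $y=2u$ and $y=u+v$ ($u\neq v$), and bounds the two pieces by the triangle inequality before passing to $L^p$. You instead observe that the Sidon property upgrades the informal ``$\mathbf{1}_E\ast\mathbf{1}_E\approx\mathbf{1}_{E+E}$'' to the exact pointwise identity $\mathbf{1}_E\ast\mathbf{1}_E = 2\,\mathbf{1}_{E+E}-\mathbf{1}_{2E}$, which after Fourier transforming gives $2\,\widehat{E+E}(x)=q^d\,\widehat{E}(x)^2+\widehat{E}(2x)$ exactly; the paper's decomposition is precisely this identity written out in terms of character sums. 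Taking $L^p$ norms and Minkowski then gives $\bigl|\,2\|\widehat{E+E}\|_p - q^d\|\widehat E\|_{2p}^2\,\bigr|\leq \|\widehat E\|_p$ (using the permutation-invariance $\|\widehat E(2\cdot)\|_p=\|\widehat E\|_p$ for odd $q$, and your remark that Sidon sets in characteristic $2$ are singletons disposes of the even case), which you then close with the trivial bound $\|\widehat E\|_p\leq q^{-d}\#E$ for the lower bound, and with the Plancherel/Sidon absorption $q^d\|\widehat E\|_{2p}^2\geq q^d\|\widehat E\|_2^2\approx q^{-d}\#E\geq\|\widehat E\|_p$ for the upper bound. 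Both steps match the paper (which uses Proposition \ref{alwaysbound} for the absorption), and your treatment of the ``in particular'' part via Proposition \ref{alwaysbound} applied to $E+E$ together with $\#(E+E)\approx(\#E)^2$ is exactly the paper's. The only gain from your formulation is that carrying the exact identity all the way through yields a lower-bound error of $\tfrac12 q^{-d}\#E$ rather than $q^{-d}\#E$, a harmless improvement in the constant.
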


\begin{proof}
Arguing as above but using the fact that $E$ is Sidon, for all $x \in \mathbb{F}_q^d$
 \begin{align*}
\lvert \widehat{ E+E}(x) \rvert  =q^{-d}  \Big\lvert  \sum_{y \in E + E} \chi(-x \cdot y)   \Big\rvert  
  &=q^{-d} \Big\lvert  \sum_{u \in E  }   \chi(-x \cdot u)^2  \ + \  \sum_{u,v \in E: u \neq v } \frac{\chi(-x \cdot u) \chi(-x \cdot  v) }{ 2} \Big\rvert \\
    &\leq q^{-d}  \sum_{u \in E  }  1  \ + \ 2^{-1}q^{-d}  \Big\lvert \sum_{u \in E  }\chi(-x \cdot u)\Big\rvert  \,  \Big\lvert \sum_{v \in   E}   \chi(-x \cdot  v)   \Big\rvert \\
   &= q^{-d} (\# E)   \ + \  2^{-1} q^d |\widehat E(x) |^2.
    \end{align*}
    Therefore,
  \begin{align*}
\| \widehat{ E+E} \|_p &\leq  q^{-d} (\# E)   \ + \  2^{-1} q^d  \| (\widehat E)^2\|_p \\
&=q^{-d} (\# E)   \ + \ 2^{-1}  q^d  \|  \widehat E \|_{2p}^2\\
&\lesssim  q^d  \|  \widehat E \|_{2p}^2
    \end{align*}
    by Proposition \ref{alwaysbound} (using the obvious fact that $\# E = o(q^d)$ since $E$ is Sidon).  The lower bound is proven similarly, but using the reverse triangle inequality.  Finally, assuming $\#(E+E)  \leq cq^d$ and applying Proposition \ref{alwaysbound} to $E+E$,
\[
q^d  \|  \widehat E \|_{2p}^2 \lesssim    \| \widehat{ E+E} \|_p  \vee q^{-d} (\# E)  \approx   \| \widehat{ E+E} \|_p  \vee q^{-d} \sqrt{\# (E+E) }   \lesssim_c   \| \widehat{ E+E} \|_p
\]
completing the proof.
\end{proof}

The previous result allows us to relate the Fourier analytic properties of $E$ and $E+E$ directly, which was our initial aim.  

\begin{cor} \label{sidon4}
 Let $E \subseteq \mathbb{F}_q^d$ be a Sidon set and let $p \geq 2$.  If  $E$ is $(2p,s)$-Salem, then $E+E$ is $(p,s)$-Salem.  On the other hand,  if  $E+E$ is $(p,s)$-Salem, then $E$ is $(2p,s \wedge 1/2)$-Salem. 
\end{cor}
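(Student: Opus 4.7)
The plan is to derive this corollary as a direct consequence of Theorem \ref{sidonthm}, using the Sidon structure to translate cardinalities and the two-sided estimate to move Fourier information between $E$ and $E+E$. The key preliminary observation is that for a Sidon set $E$, one has $\#(E+E) = \binom{\#E}{2}+\#E \approx (\#E)^2$, so that $(\#E)^{2(1-s)} \approx (\#(E+E))^{1-s}$ for any $s$. We can also assume without loss of generality that $\#(E+E) = o(q^d)$ (indeed $\#E \lesssim q^{d/2}$ gives $\#(E+E) \lesssim q^d$, so we are in the regime where Proposition \ref{alwaysbound} applies).

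For the forward direction, suppose $E$ is $(2p,s)$-Salem, so that $\|\widehat E\|_{2p} \lesssim q^{-d}(\#E)^{1-s}$. Plugging into the upper bound of Theorem \ref{sidonthm},
\[
\|\widehat{E+E}\|_p \lesssim q^d \|\widehat E\|_{2p}^2 \lesssim q^d \cdot q^{-2d}(\#E)^{2(1-s)} \approx q^{-d}(\#(E+E))^{1-s},
\]
which is precisely the $(p,s)$-Salem condition for $E+E$.

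For the reverse direction, suppose $E+E$ is $(p,s)$-Salem, so that $\|\widehat{E+E}\|_p \lesssim q^{-d}(\#(E+E))^{1-s} \approx q^{-d}(\#E)^{2(1-s)}$. The lower bound of Theorem \ref{sidonthm} rearranges to
\[
q^d \|\widehat E\|_{2p}^2 \lesssim \|\widehat{E+E}\|_p + q^{-d}(\#E) \lesssim q^{-d}(\#E)^{2(1-s)} + q^{-d}(\#E),
\]
so
\[
\|\widehat E\|_{2p}^2 \lesssim q^{-2d}\bigl((\#E)^{2(1-s)} \vee \#E\bigr),
\]
and therefore $\|\widehat E\|_{2p} \lesssim q^{-d}(\#E)^{1-s} \vee q^{-d}(\#E)^{1/2}$. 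The main (and only) subtlety is this two-term maximum: when $s \leq 1/2$, the exponent $1-s \geq 1/2$ and the first term dominates, giving $(2p,s)$-Salem; when $s > 1/2$, the $q^{-d}(\#E)^{1/2}$ term from Proposition \ref{alwaysbound} takes over and only yields $(2p,1/2)$-Salem. Combining both cases gives the stated conclusion that $E$ is $(2p,\, s\wedge 1/2)$-Salem, and this $1/2$ cap is unavoidable (even for an ideal $E+E$) because of the universal Plancherel obstruction recorded in Proposition \ref{alwaysbound}.
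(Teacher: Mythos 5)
Your proof is correct and follows essentially the same route as the paper: apply the two-sided estimate of Theorem \ref{sidonthm}, use $\#(E+E)\approx(\#E)^2$ for Sidon sets, and handle the extra $q^{-d}\#E$ term in the reverse direction by a maximum that produces the $s\wedge 1/2$ cap. (One harmless slip: $\#E\lesssim q^{d/2}$ gives $\#(E+E)\lesssim q^d$, not $\#(E+E)=o(q^d)$, but you never actually use that stronger claim.)
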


\begin{proof}
If  $E$ is $(2p,s)$-Salem, then by Theorem \ref{sidonthm}
\[
\| \widehat{ E+E} \|_p \lesssim q^d \|  \widehat E \|_{2p}^2 \lesssim q^d q^{-2d} (\# E)^{2(1-s)} \approx q^{-d} (\# (E+E) )^{(1-s)} 
\]
and so $E+E$ is $(p,s)$-Salem.  Similarly, if   $E+E$ is $(p,s)$-Salem, then
\[
\|  \widehat E \|_{2p}  \lesssim \sqrt{q^{-d}\| \widehat{ E+E} \|_p}  \vee  q^{-d} \sqrt{\#E} \lesssim \sqrt{q^{-2d} \#(E+E)^{1-s}} \vee  q^{-d} \sqrt{\#E} \approx  q^{-d} (\#E)^{1-s \wedge 1/2} 
\]
and so $E$ is $(2p,s \wedge 1/2)$-Salem.
\end{proof}

A perhaps surprising consequence of the previous result is that   Sidon sets necessarily have non-trivial Fourier analytic behaviour, including optimal $L^4$ estimates and an improvement on the general bounds from Corollary \ref{p1/p} for all $2<p<\infty$. 

\begin{cor} \label{sidonsalem}
 If  $E \subseteq \mathbb{F}_q^d$ is a Sidon set, then  $E$ is $(p,2/p)$-Salem for all $p \geq 4$.
\end{cor}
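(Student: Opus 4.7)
The plan is to combine Corollary \ref{sidon4}, which transfers Fourier behaviour between $E$ and $E+E$ for Sidon sets, with the universal lower bound from Corollary \ref{p1/p} applied to $E+E$. The core observation is that the exponent $2/p$ appearing in the conclusion is precisely the universal exponent $1/q$ obtained from Corollary \ref{p1/p} at $q=p/2$.

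First I would apply Corollary \ref{p1/p} to the set $E+E$ at the exponent $p/2$. Since $p \geq 4$, we have $p/2 \geq 2$, so the corollary applies and yields that $E+E$ is $(p/2,\, 2/p)$-Salem without any further assumption.

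Next I would apply the second implication in Corollary \ref{sidon4}, using the hypothesis that $E$ is Sidon, with the role of the parameter $p$ in that corollary played by $p/2$ and $s = 2/p$. This requires $p/2 \geq 2$, which is precisely our assumption $p \geq 4$. The conclusion is that $E$ is $(p,\, (2/p) \wedge 1/2)$-Salem. Since $p \geq 4$ forces $2/p \leq 1/2$, we have $(2/p) \wedge 1/2 = 2/p$, so $E$ is $(p, 2/p)$-Salem, as required.

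The argument is short and there is essentially no obstacle beyond checking that the parameter ranges line up: the restriction $p \geq 4$ enters in two places (to make $p/2 \geq 2$ eligible in Corollary \ref{p1/p}, and to ensure the minimum with $1/2$ in Corollary \ref{sidon4} is non-binding), and both match the statement exactly. The somewhat remarkable point worth emphasising in the write-up is conceptual rather than technical: the trivial bound $\alpha_{E+E}(p/2) \geq 2/p$ gets upgraded, via the Sidon structure, to a non-trivial bound $\alpha_E(p) \geq 2/p$ that beats the universal lower bound $\alpha_E(p) \geq 1/p$ from Corollary \ref{p1/p} for all $p > 2$ (and matches $1/2$ at $p=4$, giving optimal $L^4$ behaviour).
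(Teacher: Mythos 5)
Your proposal is correct and is essentially identical to the paper's own proof: both apply Corollary \ref{p1/p} to $E+E$ at exponent $p/2$ and then invoke the second implication of Corollary \ref{sidon4} to transfer the bound back to $E$, noting that $2/p \wedge 1/2 = 2/p$ for $p \geq 4$. The parameter bookkeeping you spell out is exactly what the paper leaves implicit.
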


\begin{proof}
Since  $E+E$ is necessarily $(p,1/p)$-Salem for all $p\geq 2$ by Corollary \ref{p1/p}, it follows from Corollary \ref{sidon4} that $E$ is $(p,2/p \wedge 1/2)$-Salem for all $p \geq 4$. In particular, $E$ is $(p,2/p )$-Salem for all $p \geq 4$.
\end{proof}

From Corollary \ref{subspace} we know that there is a limit to how good the Fourier analytic behaviour can be for sets  living in a subspace.  It turns out that large Sidon sets  have the best possible Fourier behaviour given these constraints.   This also shows that Sidon sets need not be Salem sets. 

\begin{cor} \label{sidonproperty}
 Let $E \subseteq \mathbb{F}_q$ be a Sidon set with $\#E \gtrsim \sqrt{q}$ and embed it as a subset of $\mathbb{F}_q^d$.  Then $E$ is $(p,s)$-Salem if and only if $s \leq 2/p$ for all $p \geq 4$.
\end{cor}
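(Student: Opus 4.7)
The plan is to deduce this corollary from Corollary \ref{sidonsalem} and Corollary \ref{subspace}, without repeating any Fourier analytic computation.

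First, I would combine the hypothesis $\#E \gtrsim \sqrt{q}$ with the general upper bound $\#E \lesssim q^{d/2}$ for Sidon sets in $\mathbb{F}_q^d$ (here $d=1$) to obtain $\#E \approx \sqrt{q}$, whence
\[
\log \#E = \tfrac{1}{2}\log q + O(1).
\]
Next, Corollary \ref{sidonsalem} applied to $E$ viewed as a Sidon subset of the one-dimensional ambient space $\mathbb{F}_q$ shows that $E$ is $(p, 2/p)$-Salem in $\mathbb{F}_q$ for every $p \geq 4$.

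Then I would apply Corollary \ref{subspace} with $k=1$ to lift this to $\mathbb{F}_q^d$. It states that $E$ is $(p,s')$-Salem as a subset of $\mathbb{F}_q^d$ if and only if
\[
s' \leq \frac{2}{p} \wedge \frac{(1/p)\log q}{\log \#E}.
\]
Substituting $\log \#E = \tfrac{1}{2}\log q + O(1)$ yields $\frac{(1/p)\log q}{\log \#E} = \frac{2}{p} + O(1/\log q)$, so the bound on $s'$ becomes $\tfrac{2}{p}$ up to a vanishing error.

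The only point requiring care is that the $O(1/\log q)$ perturbation in the exponent can be absorbed into the implicit constants in the definition of $(p,s)$-Salem. This is immediate because $(\#E)^{O(1/\log q)} = \exp(O(\log \#E/\log q)) = O(1)$, so multiplying $(\#E)^{1-s'}$ by $(\#E)^{O(1/\log q)}$ changes it only by a bounded factor. Hence the condition $s' \leq \tfrac{2}{p} + O(1/\log q)$ is equivalent, in the sense of $\lesssim$, to $s' \leq \tfrac{2}{p}$, as required. There is no real obstacle here; the work was done in Corollaries \ref{sidonsalem} and \ref{subspace}.
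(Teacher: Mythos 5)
Your proof is correct, but it takes a slightly different route for the ``if'' direction than the paper does. The paper's proof of the positive direction is a one-liner: the embedded set $E\times\{0\}^{d-1}\subseteq\mathbb{F}_q^d$ is still a Sidon set, so Corollary \ref{sidonsalem} applies directly to it and gives $(p,2/p)$-Salem in $\mathbb{F}_q^d$ with no further work. You instead apply Corollary \ref{sidonsalem} to $E$ as a subset of $\mathbb{F}_q$ and then lift via Corollary \ref{subspace}, which forces you to confront the fact that $\frac{(1/p)\log q}{\log\#E}$ may be slightly less than $2/p$ (when $\#E$ exceeds $\sqrt{q}$ by a constant factor), and you handle this correctly with the absorption argument $(\#E)^{O(1/\log q)}=O(1)$. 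This works, but it is more circuitous than the paper's route. One additional point worth flagging: Corollary \ref{subspace} is stated as an ``if and only if'' under the hypothesis that $E$ is $(p,s)$-Salem in $\mathbb{F}_q^k$, but strictly speaking the ``only if'' direction requires that $s$ be the optimal exponent (otherwise one could feed in a smaller $s$ and wrongly exclude valid $s'$). You use $s=2/p$ without verifying optimality; your conclusion is nonetheless sound because the binding term in the minimum turns out to be the cardinality constraint $\frac{(1/p)\log q}{\log\#E}\approx 2/p$, which is independent of $s$ — but it would be cleaner to make this explicit, or (as the paper does) simply use Corollary \ref{subspace} only for the negative direction, where the cardinality term alone suffices given $\#E\gtrsim\sqrt{q}$.
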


\begin{proof}
Let $p \geq 4$.  Corollary \ref{sidonsalem} gives that $E$ is $(p,s)$-Salem for $s \leq 2/p$ and  (using that  $\#E \gtrsim \sqrt{q}$) Corollary \ref{subspace} gives that $E$ is not  $(p,s)$-Salem  for  $s > 2/p$ for all $p \geq 4$.
\end{proof}


 \section{$L^p$ estimates for character  sums}
 
Let $f : \mathbb{F}_q \to \mathbb{F}_q^d$ be an injective  function and consider the character sum
\[
\mathcal{S}_f(z) =  \sum_{x \in  \mathbb{F}_q } \chi(z \cdot f (x)) 
\]
for $z \in  \mathbb{F}_q^d$.  In general one only has the trivial estimate 
\[
| \mathcal{S}_f(z)| \leq q
\]
but often one can do better for specific choices of $f$ and $z \neq 0$; see e.g.~Kloosterman sums when the domain of $f$ is restricted to  $\mathbb{F}_q^*$ and  $f(x) = (x, x^{-1})$ or Weil's theorem when $f$ is a polynomial \cite{nied}.  It is straightforward to relate such character sums to $L^p$ Fourier bounds. 

\begin{prop} \label{charsum}
 If $f( \mathbb{F}_q) \subseteq \mathbb{F}_q^d$ is  $(p,s)$-Salem, then 
\[
\| \mathcal{S}_f \|_p   \lesssim q^{1-s}.
\]
In particular, if $f( \mathbb{F}_q) \subseteq \mathbb{F}_q^d$ is an $(\infty, s)$-Salem set for some $s>0$, then $| \mathcal{S}_f(z)| \leq q^{1-s}$ for all $z \neq 0$.  
\end{prop}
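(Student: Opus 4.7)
The proof should be essentially an unwrapping of the definitions, since $\mathcal{S}_f(z)$ is --- up to complex conjugation and a factor of $q^d$ --- precisely the Fourier transform of the set $E := f(\mathbb{F}_q) \subseteq \mathbb{F}_q^d$. Since $f$ is injective we have $\#E = q$, and the key observation is that
\[
\widehat{E}(z) = q^{-d} \sum_{y \in E} \chi(-z \cdot y) = q^{-d} \sum_{x \in \mathbb{F}_q} \chi(-z \cdot f(x)) = q^{-d}\, \overline{\mathcal{S}_f(z)},
\]
so $|\mathcal{S}_f(z)| = q^d \, |\widehat{E}(z)|$ pointwise for every $z \in \mathbb{F}_q^d$.

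The plan is then immediate: interpret $\|\mathcal{S}_f\|_p$ as the same averaged norm used in the paper (with the zero frequency removed), and pull the factor $q^d$ outside. Explicitly, using the $(p,s)$-Salem hypothesis and $\#E = q$,
\[
\|\mathcal{S}_f\|_p = \Bigg( q^{-d} \sum_{z \in \mathbb{F}_q^d \setminus \{0\}} |\mathcal{S}_f(z)|^p \Bigg)^{1/p} = q^d \|\widehat{E}\|_p \lesssim q^d \cdot q^{-d}(\#E)^{1-s} = q^{1-s},
\]
which gives the first claim. For the second claim, applying the same pointwise identity with the $(\infty,s)$-Salem hypothesis yields $|\mathcal{S}_f(z)| = q^d|\widehat{E}(z)| \lesssim q^d \cdot q^{-d} q^{1-s} = q^{1-s}$ for every $z \neq 0$.

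There is no real obstacle here: the result is a direct translation between the language of character sums and the language of $L^p$ averages of Fourier transforms of indicator functions. The only mild point worth spelling out is that the convention for $\|\mathcal{S}_f\|_p$ must be chosen compatibly with \eqref{lpnorm} (i.e.~with the $q^{-d}$ prefactor and the omission of $z = 0$), so that the factor $q^d$ cleanly relates the two norms; this is consistent with the observation that $\mathcal{S}_f(0) = q$ would otherwise contribute trivially and obscure the inequality.
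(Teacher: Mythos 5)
Your proposal is correct and follows exactly the same route as the paper: you record the pointwise identity $q^d\widehat{f(\mathbb{F}_q)}(z)=\overline{\mathcal{S}_f(z)}$ (using injectivity of $f$ so that $\#f(\mathbb{F}_q)=q$), pull the factor $q^d$ through the $L^p$ norm with the zero frequency omitted, and then apply the $(p,s)$-Salem hypothesis; the $p=\infty$ case is the same identity applied pointwise. There is no difference from the paper's argument.
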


\begin{proof}
For $z \in  \mathbb{F}_q^d$,
\[
q^d \widehat{f( \mathbb{F}_q) } (z) = \overline{\mathcal{S}_f(z) }.
\]
Therefore, 
\begin{align*}
\| \mathcal{S}_f \|_p  = \left( \frac{1}{q^d} \sum_{z \in  \mathbb{F}_q^d \setminus\{0\} } \left\lvert \mathcal{S}_f(z)  \right\rvert^p \right)^{1/p} &=   \left(   \frac{1}{q^d} \sum_{z \in  \mathbb{F}_q^d \setminus\{0\}} q^{pd}\left\lvert  \widehat{f( \mathbb{F}_q) } (z)   \right\rvert^p \right)^{1/p} \\
&= q^d\| \widehat{ f( \mathbb{F}_q) }\|_p \\
&\lesssim   (\# f( \mathbb{F}_q) )^{1-s} \\
&\approx q^{1-s}
\end{align*}
as required.
\end{proof}

Combining the above result with our work on Sidon sets, we can establish optimal $L^4$ bounds for certain character sums.  More general formulations are possible, but we restrict our attention to a simple example.

\begin{cor}
For $a,b \in \mathbb{F}_q$ define the Weil sum
\[
W(a,b) = \sum_{x \in \mathbb{F}_q} \chi(a x+b x^{2}).
\]
Then
\[
\| W \|_4 = \left( \frac{1}{q^2} \sum_{(a,b) \in  \mathbb{F}_q^2 \setminus\{0\}  }  \left\lvert W(a,b) \right\rvert^4 \right)^{1/4} \lesssim \sqrt{q}.
\]
\end{cor}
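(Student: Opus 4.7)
The plan is to realise the Weil sum as the character sum $\mathcal{S}_f$ associated to a parabola, and then invoke the Sidon machinery. Concretely, define $f\colon \mathbb{F}_q\to\mathbb{F}_q^2$ by $f(x)=(x,x^2)$, so that $W(a,b)=\mathcal{S}_f(a,b)$ for every $(a,b)\in\mathbb{F}_q^2$. Proposition \ref{charsum} applied with $d=2$, $p=4$ and $s=1/2$ then gives
\[
\|W\|_4=\|\mathcal{S}_f\|_4\lesssim q^{1-s}=\sqrt q,
\]
provided we can show that $P:=f(\mathbb{F}_q)=\{(x,x^2):x\in\mathbb{F}_q\}\subseteq\mathbb{F}_q^2$ is $(4,1/2)$-Salem.

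To establish the $(4,1/2)$-Salem property I would appeal to Corollary \ref{sidonsalem}, which says that every Sidon set is $(p,2/p)$-Salem for all $p\geq4$; specialising to $p=4$ gives exactly $(4,1/2)$-Salem. So the task reduces to verifying that $P$ is a Sidon set in $\mathbb{F}_q^2$. Suppose
\[
(x_1,x_1^2)+(x_2,x_2^2)=(y_1,y_1^2)+(y_2,y_2^2).
\]
The first coordinates give $x_1+x_2=y_1+y_2$, and combining this with the second-coordinate equation $x_1^2+x_2^2=y_1^2+y_2^2$ via the identity $2x_1x_2=(x_1+x_2)^2-(x_1^2+x_2^2)$ yields $x_1x_2=y_1y_2$. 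Hence $\{x_1,x_2\}$ and $\{y_1,y_2\}$ are the (multiset) roots of a common monic quadratic $z^2-(x_1+x_2)z+x_1x_2$, so they coincide; this is the Sidon property.

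The main obstacle is the characteristic-$2$ case, where dividing by $2$ is illegal and the Frobenius identity $(x_1+x_2)^2=x_1^2+x_2^2$ makes the second equation redundant, so $P$ genuinely fails to be Sidon. I would handle this case separately by observing that $x\mapsto x^2$ is a field automorphism in characteristic $2$, which reduces $W(a,b)$ to a sum that can be bounded directly; alternatively one may bypass the Sidon route altogether and appeal to Theorem \ref{curves} applied to the Veronese curve, which gives that $P$ is Salem (hence in particular $(4,1/2)$-Salem) in any characteristic since $k$ and $k^2$ are linearly independent polynomials. Either route feeds into Proposition \ref{charsum} to deliver the stated bound $\|W\|_4\lesssim\sqrt q$.
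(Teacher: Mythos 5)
Your main argument is exactly the paper's: identify $W=\mathcal{S}_f$ for $f(x)=(x,x^2)$, verify that the parabola is a Sidon set, invoke Corollary \ref{sidonsalem} to get the $(4,1/2)$-Salem property, and finish with Proposition \ref{charsum}. The Sidon verification via elementary symmetric functions is fine in odd characteristic and matches the paper's ``simple algebraic exercise''.

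You are right to worry about characteristic $2$, and the paper is silent on this point, but both of your proposed repairs fail: the corollary is actually \emph{false} for even $q$. In characteristic $2$ the map $x\mapsto x^2$ is additive, so $P=\{(x,x^2)\}$ is an additive subgroup of $\mathbb{F}_q^2$ of order $q$; writing $\chi(t)=\chi_0(\mathrm{Tr}(ct))$ and using $\mathrm{Tr}(u^2)=\mathrm{Tr}(u)$, one finds $W(a,b)=\sum_x\chi_0\big(\mathrm{Tr}((ca+(cb)^{1/2})x)\big)$, which equals $q$ for the $q-1$ nonzero pairs with $ca=(cb)^{1/2}$ and $0$ otherwise, giving $\|W\|_4\approx q^{3/4}\gg\sqrt q$. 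Correspondingly, your appeal to Theorem \ref{curves} ``in any characteristic'' does not go through: that theorem rests on Weil's bound \cite[Theorem 5.38]{nied}, which requires the degree of $z_1f_1+\cdots+z_df_d$ to be coprime to $q$, and this fails for $z_2k^2$ when $q$ is even (linear independence of $k$ and $k^2$ is not the issue). The honest fix is simply to restrict the corollary to odd $q$, where your argument (and the paper's) is complete.
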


\begin{proof}
Let $f : \mathbb{F}_q \to \mathbb{F}_q^2$ be defined by 
\[
f(x) = (x, x^{2})
\]
and then $\| W \|_4 = \| S_f  \|_4$.  The image $f(\mathbb{F}_q)$ is a  Sidon set and satisfies $f(\mathbb{F}_q) = q$ and therefore is $(4,1/2)$-Salem by Theorem \ref{sidonsalem}.  Then by Proposition \ref{charsum}
\[
\| W \|_4  \lesssim \sqrt{q}
\]
as required.  To see why $f(\mathbb{F}_q)$ is a  Sidon set, suppose  $(x,x^{2})+ (y,y^{2}) =(z,z^{2})+ (w,w^{2}) $ for some $x,y,z,w \in \mathbb{F}_q$.  If $x \neq z$, then it is a simple algebraic exercise to show that $x=w$, which proves the claim.
\end{proof}

Of course, it follows by Weil's Theorem \cite[Theorem 5.38]{nied}  that the  stronger $L^\infty$ bound holds for $W$ in the above.  What we offer here is a simple and alternative argument to obtain the weaker $L^4$ estimate.  We can also treat Kloosterman sums in a  similar way.  This is despite the fact that the relevant image is not a Sidon set.  We include the details below.  Again it is well known that  the stronger $L^\infty$ bound holds for Kloosterman sums, but our approach is  rather simpler and different to that presented in, for example,  \cite[Theorem 5.45]{nied}.

\begin{cor}
For $a,b \in \mathbb{F}_q$ define the Kloosterman sum
\[
K(a,b) = \sum_{x \in \mathbb{F}_q^*} \chi(a x+b x^{-1}).
\]
Then
\[
\| K \|_4 = \left( \frac{1}{q^2} \sum_{(a,b) \in  \mathbb{F}_q^2 \setminus\{0\}  }  \left\lvert K(a,b) \right\rvert^4 \right)^{1/4} \lesssim \sqrt{q}.
\]
\end{cor}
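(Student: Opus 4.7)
The plan is to run the same template as the preceding Weil-sum corollary, with the Salem input coming from the earlier proposition on the curve $\{(k,k^{-1})\}$ rather than from a Sidon-set argument. First I would set $E = \{(x,x^{-1}) : x \in \mathbb{F}_q^*\} \subseteq \mathbb{F}_q^2$, so that $\#E = q-1 \approx q$. A direct computation of the Fourier transform yields
\[
q^2\,\widehat E(a,b) = \sum_{x \in \mathbb{F}_q^*} \chi\bigl(-(ax + bx^{-1})\bigr) = \overline{K(a,b)}
\]
for all $(a,b) \in \mathbb{F}_q^2$. Consequently $\|K\|_4 = q^2 \|\widehat E\|_4$, and the task reduces to bounding $\|\widehat E\|_4$.

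Next I would invoke the $d=2$ case of the earlier proposition on sets of the form $\{(k,\dots,k,k^{-1})\}$, which states precisely that this $E$ is a Salem set in $\mathbb{F}_q^2$. This directly gives
\[
\|\widehat E\|_4 \le \|\widehat E\|_\infty \lesssim q^{-2}\sqrt{\#E} \approx q^{-3/2},
\]
so $E$ is in particular $(4,1/2)$-Salem. This is the step that the author's commentary highlights: even though $E$ fails to be a Sidon set (unlike the image $\{(x,x^2)\}$ in the Weil-sum corollary), the Salem property is still available, just via the curve result instead of via Corollary \ref{sidonsalem}. Combining the two displays yields
\[
\|K\|_4 = q^2 \|\widehat E\|_4 \lesssim q^2 \cdot q^{-3/2} = \sqrt{q},
\]
as desired. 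Equivalently, one may extend $f(x)=(x,x^{-1})$ injectively to all of $\mathbb{F}_q$ and apply Proposition \ref{charsum} with $p=4$ and $s=1/2$; the extra point contributes an $O(1)$ summand to each character sum and does not affect the bound.

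There is no substantial obstacle here. The only cosmetic issues are that $f$ is naturally defined on $\mathbb{F}_q^*$ rather than on $\mathbb{F}_q$ (handled by working with $E$ directly, as above), and that one must recognise the earlier curve proposition as the correct substitute for the Sidon route used in the Weil-sum case. Everything else is a one-line translation from the Salem bound for $E$ to the $L^4$ bound for $K$.
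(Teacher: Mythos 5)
Your argument is logically valid, but it takes a genuinely different route from the paper, and the difference matters here. You obtain the $(4,1/2)$-Salem property of $E=\{(x,x^{-1})\}$ by citing the earlier proposition on curves of the form $\{(k,\dots,k,k^{-1})\}$ in the case $d=2$; the proof of that proposition rests on the classical bound $|K(a,b)|\leq 2\sqrt q$ for Kloosterman sums (Weil, via \cite[Theorem 5.45]{nied}). So your derivation amounts to: pointwise square-root cancellation for $K$ implies square-root cancellation for $K$ on $L^4$ average — true, but it imports exactly the deep input that this corollary is designed to avoid. The surrounding text makes clear that the point of the statement is to give a \emph{simple, alternative, self-contained} proof of the weaker $L^4$ estimate, not to rederive it from the known $L^\infty$ bound. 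The paper's proof instead adapts the Sidon-set convolution argument of Theorem \ref{sidonthm}: one computes the fibre function $g(u,v)=\#\{(u',v')\in E\times E: u+v=u'+v'\}$ and checks (by solving $b x^2 - abx + a=0$) that $g\equiv 2$ except on the $\approx q$ degenerate pairs with $u+v=0$; the reverse triangle inequality then gives $|\widehat E(x)|^2 \lesssim q^{-2}|\widehat{E+E}(x)| + q^{-4}\#E$, and since $E+E$ is automatically $(2,1/2)$-Salem by Corollary \ref{p1/p} (i.e.\ by Plancherel alone), one concludes $\|\widehat E\|_4 \lesssim q^{-2}\sqrt{\#E}$ and hence $\|K\|_4\lesssim\sqrt q$ via Proposition \ref{charsum}. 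That argument uses nothing beyond counting and Plancherel, which is what the corollary is meant to showcase. If you want your write-up to serve the paper's purpose, you should replace the appeal to the earlier curve proposition with this elementary sumset computation.
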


\begin{proof}
Let $f : \mathbb{F}_q^* \to \mathbb{F}_q^2$ be defined by 
\[
f(x) = (x, x^{-1})
\]
and extend $f$ to the whole of $\mathbb{F}_q$ by $f(0) = (0,0)$. Then $\| K \|_4 = \| S_f - 1 \|_4 \leq \| S_f   \|_4+1$.  The image $E:=f(\mathbb{F}_q) \subseteq \mathbb{F}_q^2$ is not a Sidon set.  In particular
\[
 (x,x^{-1})+ ((-x),(-x)^{-1}) = (0,0)
\]
for all $x \in \mathbb{F}_q^*$.  This is in some sense the \emph{only} way that the image fails to be Sidon and so we can adapt our approach in the Sidon case to handle this case as well.  To understand the sumset, suppose that $(x,x^{-1})+ (y,y^{-1}) = (a,b)$ for $x,y \neq 0$.  Then
\[
1 = y \cdot y^{-1} = (a-x) \cdot (b-x^{-1})
\]
and so $b \cdot x^2-a \cdot b \cdot  x + a = 0$.  This is enough to deduce that for $u,v \in E$
\[
 g(u,v) := \#\{ (u',v') \in E \times E \ : \ u+v = u'+v'\} =   \left\{\begin{array}{cc} 
2 &  u \neq v, \, u+v \neq (0,0) \\
 1 &  u = v, \,  u+v \neq (0,0) \\
 q &  u+v=(0,0)
\end{array} \right.
\] 
The key thing here is that $g(u,v)=2$ is constant for all but $\approx q \approx \# E$ many pairs $(u,v)$.  Therefore, by the reverse triangle inequality,
 \begin{align*}
\lvert \widehat{ E+E}(x) \rvert  &=q^{-2}  \Big\lvert  \sum_{y \in E + E} \chi(-x \cdot y)   \Big\rvert  \\
    &\geq   2^{-1}q^{-2}  \Big\lvert \sum_{u \in E  }\chi(-x \cdot u)\Big\rvert  \,  \Big\lvert \sum_{v \in   E}   \chi(-x \cdot  v)   \Big\rvert \ - \ q^{-2} O( \#E ) \\
   &=   2^{-1} q^2 |\widehat E(x) |^2 \ - \ q^{-2} O( \#E )  .
    \end{align*}
Therefore
\[
|\widehat E(x) |^4 \lesssim  q^{-4}  \lvert \widehat{ E+E}(x) \rvert^2  \vee q^{-8}( \#E)^2  
\]
and 
 \begin{align*}
\|\widehat E\|_4^4 & \lesssim   q^{-4}\ \|\widehat {E+E} \|_2^2 \vee q^{-8} ( \#E)^2    \\ 
& \lesssim   q^{-4}\ (q^{-2} \sqrt{\#(E + E)})^2 \vee q^{-8} ( \#E)^2  \qquad \text{(by Corollary \ref{p1/p})}  \\ 
& \lesssim    q^{-8} ( \#E)^2.
    \end{align*}
In the above we used that $\#(E + E) \approx (\#E)^2$. We have proved that $\|\widehat E\|_4  \lesssim  q^{-2} \sqrt{\#E} $ and so by Proposition  \ref{charsum}
\[
\| K \|_4  \lesssim \sqrt{q}
\]
as required.
\end{proof}

\section{Spherical averages and more counting problems}

We first prove a general bound for spherical averages of the Fourier transform.  This allows us to connect our $L^p$ approach to several well-known counting problems.  We apply it explicitly in the subsequent subsections to the distance set problem and the problem of counting simplices determined by a given set.  Here and below   we use the shorthand 
\[
|m|^2 =  m_1^2+\cdots + m_{d}^2 
\]
for $m =(m_1, \dots, m_d) \in \mathbb{F}_q^d$.

\begin{lma} \label{sphericalaverages}
Let  $E \subseteq\mathbb{F}_q^d$.  Then
\[
 \Bigg( \sum_{\substack{m \in \mathbb{F}_q^d\setminus\{0\}:\\ |m|^2=0}} |\widehat E(m)|^2 \Bigg)^2+ \sum_{t \in \mathbb{F}_q^*} \Bigg( \sum_{\substack{m \in \mathbb{F}_q^d:\\ |m|^2=t}} |\widehat E(m)|^2 \Bigg)^2  \lesssim  q^{2d-1}  \| \widehat E \|_4^4 .
 \]
\end{lma}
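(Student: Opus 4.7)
The plan is to apply Cauchy--Schwarz to each spherical average and then exploit a uniform upper bound on the size of spheres. For $t \in \mathbb{F}_q$ write $A(t) = \sum_{|m|^2 = t} |\widehat{E}(m)|^2$ for $t \in \mathbb{F}_q^*$ and $A^*(0) = \sum_{m \ne 0,\ |m|^2 = 0} |\widehat{E}(m)|^2$. The quantity we want to bound is $A^*(0)^2 + \sum_{t \in \mathbb{F}_q^*} A(t)^2$.

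First, I would apply Cauchy--Schwarz sphere-by-sphere: for each $t \in \mathbb{F}_q^*$,
\[
A(t)^2 = \Bigg(\sum_{|m|^2=t} |\widehat E(m)|^2 \cdot 1 \Bigg)^2 \leq \#S_t^{d-1} \cdot \sum_{|m|^2=t} |\widehat E(m)|^4,
\]
and analogously for $A^*(0)^2$ summing over $m \in S_0^{d-1} \setminus \{0\}$. The key ingredient is the uniform estimate $\#S_t^{d-1} \lesssim q^{d-1}$ valid for every $t \in \mathbb{F}_q$ (and hence also for $\#(S_0^{d-1}\setminus\{0\})$), which is the standard sphere-counting bound in $\mathbb{F}_q^d$ (see e.g.~\cite{covert}, as already invoked earlier in the paper).

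Plugging this in and summing over $t$, the sphere sums partition $\mathbb{F}_q^d \setminus \{0\}$ exactly (the $t=0$ term contributing $S_0^{d-1}\setminus\{0\}$ and the $t \ne 0$ terms contributing $S_t^{d-1}$), so
\[
A^*(0)^2 + \sum_{t \in \mathbb{F}_q^*} A(t)^2 \lesssim q^{d-1}\sum_{m \in \mathbb{F}_q^d \setminus\{0\}} |\widehat E(m)|^4 = q^{d-1} \cdot q^d \|\widehat E\|_4^4 = q^{2d-1} \|\widehat E\|_4^4,
\]
using the definition \eqref{lpnorm} of $\|\widehat E\|_4^4$ in the last step. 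This is exactly the desired inequality.

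I do not expect a substantive obstacle here: the proof is essentially one application of Cauchy--Schwarz together with a sharp count of the number of points on a sphere in $\mathbb{F}_q^d$. The only minor bookkeeping point is to make sure that the $t=0$ case is handled with $m=0$ removed, but this is automatic since the bound $\#S_0^{d-1} \lesssim q^{d-1}$ holds in all dimensions and the sum on the right of the final inequality is already taken over $m \ne 0$.
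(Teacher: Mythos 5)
Your proof is correct and is essentially the paper's argument: the paper applies Jensen's inequality with the (approximately uniform) weight $1/q^{d-1}$ on each sphere, which is the same Cauchy--Schwarz step you perform, followed by the same collapse of the level sets into a sum over all of $\mathbb{F}_q^d\setminus\{0\}$. Your phrasing via the one-sided bound $\#S_t^{d-1}\lesssim q^{d-1}$ is if anything slightly cleaner, since it quietly handles the degenerate case where $S_0^{d-1}$ is a single point without any caveat.
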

\begin{proof}
In order to apply Jensen's inequality below we recall that
\[
 \#\{ m \in \mathbb{F}_q^d:|m|^2=t\} = \# S_t^{d-1} \approx q^{d-1}
\]
for $t \in \mathbb{F}_q^*$ and (when it is not a single point)
\[
 \#\{ m \in \mathbb{F}_q^d :|m|^2=0\} = \# S_0^{d-1}  \approx q^{d-1}.
\]
Estimating directly 
\begin{align*}
 &\, \hspace{-1.5cm}\Bigg( \sum_{\substack{m \in \mathbb{F}_q^d\setminus\{0\}:\\ |m|^2=0}} |\widehat E(m)|^2 \Bigg)^2+ \sum_{t \in \mathbb{F}_q^*} \Bigg( \sum_{\substack{m \in \mathbb{F}_q^d:\\ |m|^2=t}} |\widehat E(m)|^2 \Bigg)^2 \\
& =  q^{2(d-1)} \sum_{t \in \mathbb{F}_q} \Bigg(\frac{1}{q^{d-1}} \sum_{\substack{m \in \mathbb{F}_q^d\setminus\{0\}:\\ |m|^2=t}} |\widehat E(m)|^2 \Bigg)^2 \\
& \lesssim q^{2(d-1)} \sum_{t \in \mathbb{F}_q} \frac{1}{q^{d-1}} \sum_{\substack{m \in \mathbb{F}_q^d\setminus\{0\}:\\ |m|^2=t}} |\widehat E(m)|^4 \qquad \text{(by Jensen's inequality)}\\
& \leq q^{d-1} \sum_{m \in \mathbb{F}_q^d\setminus\{0\}}   |\widehat E(m)|^4  \\
& =q^{2d-1}  \| \widehat E \|_4^4 
\end{align*}
as required.
\end{proof}

 \subsection{Distance sets}
 
 Given $E \subseteq \mathbb{F}_q^d$, the \emph{distance set} of $E$ is
\[
D(E) = \left\{ \sum_{i=1}^d (x_i-y_i)^2 : (x_1, \dots, x_d), (y_1, \dots, y_d) \in  E \right\} \subseteq \mathbb{F}_q.
\]
A well-known and difficult problem  is to relate the cardinality  of $D(E)$ to the cardinality  of $E$. This is a finite fields analogue of the famous Erd\H{o}s distinct distances problem in discrete geometry and the Falconer distance problem in geometric measure theory, see \cite{distancesets, guthkatz, guth, shmerkinwang}.  More precisely, one formulation of the \emph{finite fields distance conjecture} is:
\begin{conj} \label{maindistanceconj}
If $q$ is odd, $d$ is even,   $E \subseteq \mathbb{F}_q^d$, and  $\# E \geq C q^{d/2}$ with $C$ sufficiently large, then $\# D(E) \gtrsim q$. 
\end{conj}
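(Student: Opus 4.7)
The plan is to reduce Conjecture~\ref{maindistanceconj} to an $L^4$ Fourier bound on $E$ and attempt to close the argument using Lemma~\ref{sphericalaverages}. Define the distance counting function
\[
\nu(t) = \#\{(x,y) \in E \times E : |x-y|^2 = t\},
\]
so that $\sum_t \nu(t) = (\#E)^2$ and $\nu(t) > 0$ precisely when $t \in D(E)$. Cauchy--Schwarz then gives
\[
\#D(E) \geq \frac{(\#E)^4}{\sum_t \nu(t)^2},
\]
so the task is to show $\sum_t \nu(t)^2 \lesssim (\#E)^4/q$ whenever $\#E \geq Cq^{d/2}$ with $C$ sufficiently large.

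Next, I would expand $\nu(t)$ by writing $\nu(t) = \sum_{z \in S_t}(\mathbf{1}_E \ast \mathbf{1}_{-E})(z)$ and transferring to the Fourier side. Separating off the $m=0$ contribution gives a decomposition $\nu(t) = M(t) + \mathcal{E}(t)$, where $M(t) = q^{-d}\#S_t \,(\#E)^2 \approx q^{-1}(\#E)^2$ for $t \in \mathbb{F}_q^*$, and
\[
\mathcal{E}(t) = q^{2d}\sum_{m \neq 0} \widehat{S_t}(m)|\widehat E(m)|^2.
\]
The main term already contributes exactly $\sum_{t \in \mathbb{F}_q^*} M(t)^2 \approx (\#E)^4/q$, which is precisely the target. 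Everything therefore reduces to controlling $\sum_t |\mathcal{E}(t)|^2$.

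The crucial observation (valid for $d$ even and $q$ odd) is that the explicit Gauss-sum formula for $\widehat{S_t}(m)$ depends on $m$ only through $|m|^2$, with modulus bounded by $\lesssim q^{-(d+1)/2}$. Regrouping the sum defining $\mathcal{E}(t)$ by the value $s = |m|^2$ and applying Cauchy--Schwarz in the outer variable $t$ reduces the bound on $\sum_t |\mathcal{E}(t)|^2$ to an absolute constant times
\[
q^{C_d} \bigg[ \Big(\sum_{|m|^2=0,\, m\neq 0} |\widehat E(m)|^2\Big)^2 + \sum_{t \in \mathbb{F}_q^*} \Big(\sum_{|m|^2=t} |\widehat E(m)|^2\Big)^2 \bigg]
\]
for an explicit exponent $C_d$ coming from the Gauss-sum normalisation and the $\ell^2$ loss in grouping. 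Lemma~\ref{sphericalaverages} then converts this into a bound of the form $q^{C_d + 2d - 1}\|\widehat E\|_4^4$. Balancing against the target $(\#E)^4/q$ shows that the whole argument closes \emph{precisely} when $E$ is $(4,1/2)$-Salem, i.e.~$\alpha_E(4)=1/2$.

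This is exactly where the main obstacle lies. Unconditionally, Corollary~\ref{p1/p} yields only $\alpha_E(4) \geq 1/4$, and affine-subspace examples such as those in Proposition~\ref{smallcomplement} show that this can be sharp even for sets of size $\gg q^{d/2}$. To push the conjecture through via this route one must therefore establish a \emph{dichotomy}: either the $(4,1/2)$-Salem property holds for all sufficiently large $E$, or else those sets which fail it necessarily carry enough additive structure---for example concentration on a coset of a subspace or a large affine piece contained in a sphere, as in Section~\ref{gensalemsec}---that $\#D(E) \gtrsim q$ can be forced directly, perhaps by combining the sumset and direction-set bounds of Corollary~\ref{sumsetgood} with a structure-versus-randomness argument inspired by the generic result Theorem~\ref{genericsalem}. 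Identifying and executing such a dichotomy is the heart of the open problem; the $L^p$ framework developed in this paper cleanly reduces the conjecture to this purely structural question, but does not, on its own, close the remaining gap.
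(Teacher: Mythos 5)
You have not proved the statement, and you should be aware that the paper does not either: Conjecture \ref{maindistanceconj} is stated as an open problem (the finite fields distance conjecture), and the paper's contribution is only the partial result Theorem \ref{distancemain}, which resolves it under the additional hypothesis that $E$ is $(4,1/2)$-Salem. Your conditional argument is essentially a re-derivation of exactly that: the second-moment/Cauchy--Schwarz reduction, the separation of the $m=0$ main term, the regrouping of $\widehat{S_t}(m)$ by the value of $|m|^2$, and the application of Lemma \ref{sphericalaverages} is the same mechanism as the Mattila-integral bound $\mathcal{M}(E) \lesssim q^{5d}(\#E)^{-4}\|\widehat E\|_4^4$ used in the proof of Theorem \ref{distancemain} via \cite[Theorem 1.5]{iosevich}. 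So the part of your write-up that is correct reproduces known conditional results rather than the conjecture itself.

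The genuine gap is the final ``dichotomy'' step, which you flag but do not supply, and which is the entire difficulty. It is not true that large sets are automatically close to $(4,1/2)$-Salem, nor is it known that failure of the $L^4$ bound forces enough additive structure to conclude $\#D(E)\gtrsim q$ by other means: the paper itself exhibits sets of size $\gtrsim q^{d/2}$ with the worst possible $L^p$ behaviour $\alpha_E(p)=1/p$ (the set $E$ in \eqref{defofEE}, and subspace-type examples as in Proposition \ref{smallcomplement} and Corollary \ref{subspace}), and for such sets neither your Fourier estimate nor the sumset/direction bounds of Corollary \ref{sumsetgood} yield anything nontrivial about distances. Moreover, when $d$ is even there exist large totally isotropic subspaces (cf.\ Section \ref{gensalemsec}), so any structural half of the dichotomy would have to engage with genuinely degenerate configurations; this is precisely why the conjecture requires $C$ large and remains open. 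In short: your proposal is an honest reduction plus a statement of the open problem, not a proof, and it follows the same route as the paper's conditional Theorem \ref{distancemain} rather than offering a new path to the unconditional statement.
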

The assumption that $d$ is even in Conjecture \ref{maindistanceconj} is necessary.  This is due to \cite{hart} where it was shown that the conjecture is false  for odd $d$ even if the exponent $d/2$ is replaced by anything strictly smaller than $(d+1)/2$; see also  \cite{covert,  firdavs}.  This  highlights another interesting distinction between the finite fields case and the Euclidean case.  In particular, the Euclidean analogue of Conjecture \ref{maindistanceconj} is open for all $d \geq 2$ and improvements over the exponent $(d+1)/2$ have been given for all $d \geq 2$. In what follows we do not make any assumption about $d$.

We also note that the assumption that $q$ is odd in the above conjecture is necessary.  For example, if $q=2^m$ for large $m$ and $E=S_0^{d-1}$ is the sphere of radius zero, then $\#E \approx q^{d-1}$ but $D(E) = \{0\}$. This latter fact follows easily  since $(x-y)^2 = x^2+y^2$ holds in fields of characteristic 2. In what follows we assume  that $q$ is odd.  

 Iosevich--Rudnev introduced a Fourier analytic approach to the finite fields distance  conjecture in \cite{iosevich} which made use of discrete analogues of Mattila integrals \cite{mattila}.  They proved that   if  $\# E \geq 3 q^{(d+1)/2}$, then $D(E) = \mathbb{F}_q$.   It is also proved in \cite{iosevich} that if $E \subseteq \mathbb{F}_q^d$ satisfies  $\# E \geq C q^{d/2}$ with $C$ sufficiently large and $E$ is  Salem, then $\# D(E) \gtrsim q$.  This should be interpreted as a discrete analogue of Mattila's result resolving the Euclidean distance set problem for Salem sets (that is, sets with optimal Fourier decay), see \cite{mattila}.  We use our $L^p$ approach to strengthen this latter result of  Iosevich--Rudnev, including a solution to the problem for $(4,1/2)$-Salem sets; a significantly larger family of sets than the Salem sets.

\begin{thm} \label{distancemain}
Let $q$ be odd and suppose  $E \subseteq \mathbb{F}_q^d$ satisfies $\# E \geq C q^{d/2}$ with $C$ sufficiently large.  If  $E  $ is $(4,s )$-Salem, then
\[
\# D(E) \gtrsim \min\left\{q, q^{1-d} (\#E)^{4s} \right\}.
\]
In particular, if $E$ is $(4,1/2)$-Salem, that is, if
\[
 \sum_{y \in \mathbb{F}_q^d} |\widehat E(y)|^{4}   \lesssim q^{-3d} (\# E)^2,
\]
then $\# D(E) \gtrsim q.$
\end{thm}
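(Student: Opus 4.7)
The plan is to follow the standard Mattila-type counting strategy from \cite{iosevich}, with the crucial novelty being that the spherical $L^2$ average of $\widehat E$ is controlled by Lemma \ref{sphericalaverages} in terms of $\|\widehat E\|_4^4$, into which the $(4,s)$-Salem hypothesis feeds directly. Define the distance counting function
\[
\nu(t) = \#\{(x,y) \in E \times E : |x-y|^2 = t\},
\]
so that $\sum_t \nu(t) = (\#E)^2$ and $\nu(t)>0$ if and only if $t \in D(E)$. Cauchy--Schwarz immediately gives
\[
(\#E)^4 = \Big(\sum_{t \in D(E)} \nu(t)\Big)^2 \leq \#D(E) \cdot \sum_t \nu(t)^2,
\]
so the entire task reduces to producing a sharp upper bound on $\sum_t \nu(t)^2$.

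Next I would write $\nu(t) = \sum_{m : |m|^2 = t} r_E(m)$ where $r_E(m) = \#\{(x,y) \in E^2 : x-y=m\}$ has Fourier transform $\widehat{r_E}(z) = q^{d}|\widehat E(z)|^2$. Squaring and swapping orders,
\[
\sum_t \nu(t)^2 = \sum_{|m_1|^2 = |m_2|^2} r_E(m_1) r_E(m_2),
\]
and Fourier inversion expresses this as a double sum over $z_1, z_2 \in \mathbb{F}_q^d$. The key auxiliary computation is evaluating
\[
\sum_{|m_1|^2 = |m_2|^2} \chi(z_1 \cdot m_1 + z_2 \cdot m_2)
\]
by using $\mathbf{1}_{\{|m|^2 = t\}} = q^{-1} \sum_s \chi(s(|m|^2 - t))$ and the standard completing-the-square/Gauss-sum identity $\sum_m \chi(s|m|^2 + z \cdot m) = \chi(-|z|^2/(4s))\,g(s)^d$ valid for $s \neq 0$, using $q$ odd. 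After carefully separating the contributions from $(z_1,z_2) = (0,0)$ (which produces the main term $q^{-1}(\#E)^4$), from exactly one of $z_1, z_2$ equal to zero (lower order once $\#E \geq C q^{d/2}$ with $C$ large), and from $z_1, z_2 \neq 0$ (which is diagonal in $|z_1|^2 = |z_2|^2$ up to a cancelling error), the decomposition reads
\[
\sum_t \nu(t)^2 \lesssim q^{-1}(\#E)^4 + q^{3d} \sum_{u \in \mathbb{F}_q} \Big(\sum_{\substack{z \neq 0:\\ |z|^2 = u}} |\widehat E(z)|^2\Big)^2.
\]

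Now the $L^p$ hypothesis is invoked precisely once: Lemma \ref{sphericalaverages} bounds the spherical average on the right by $q^{2d-1} \|\widehat E\|_4^4$, and the $(4,s)$-Salem assumption gives $\|\widehat E\|_4^4 \lesssim q^{-4d}(\#E)^{4(1-s)}$. Combining,
\[
\sum_t \nu(t)^2 \lesssim q^{-1}(\#E)^4 + q^{d-1}(\#E)^{4(1-s)},
\]
and feeding this back into the Cauchy--Schwarz estimate yields
\[
\#D(E) \geq \frac{(\#E)^4}{\sum_t \nu(t)^2} \gtrsim \min\Big\{q,\; q^{1-d}(\#E)^{4s}\Big\},
\]
which is the claimed inequality. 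The ``in particular'' statement then follows by taking $s = 1/2$: the second quantity in the minimum is $q^{1-d}(\#E)^2 \geq C^2 q$ by the hypothesis $\#E \geq C q^{d/2}$.

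The main obstacle is the bookkeeping in the auxiliary Gauss-sum computation: one must verify that the error pieces arising when $z_1$ or $z_2$ vanishes, as well as the off-diagonal contributions where $|z_1|^2 \neq |z_2|^2$, are dominated by either $q^{-1}(\#E)^4$ or the spherical $L^2$ term, and this is exactly where the largeness of $C$ and the oddness of $q$ (to make the Gauss sum $g(s)$ have modulus $\sqrt q$) are used. Once that calculation is pushed through, the $(4,s)$-Salem hypothesis plugs in cleanly via Lemma \ref{sphericalaverages} and the conclusion is automatic.
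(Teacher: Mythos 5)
Your argument is essentially the paper's: the paper bounds the Iosevich--Rudnev Mattila integral $\mathcal{M}(E)$ via Lemma \ref{sphericalaverages} together with the $(4,s)$-Salem hypothesis, exactly as you do, and then simply invokes \cite[Theorem 1.5]{iosevich} for the implication $\#D(E) \gtrsim q \wedge q/\mathcal{M}(E)$ rather than re-deriving it. Your sketch of that re-derivation (the second moment of $\nu(t)$ via Cauchy--Schwarz and Gauss sums) is the standard proof of the cited theorem, so the Gauss-sum bookkeeping you defer is precisely the content of the black box the paper uses, and the genuinely new step --- feeding the $L^4$ bound through the spherical average lemma --- is identical in both.
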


\begin{proof}
We first estimate the   finite field analogue of the \emph{Mattila integral},  introduced in \cite{iosevich}, which we denote by $\mathcal{M}(E)$.  Indeed, by Lemma \ref{sphericalaverages},
\begin{align*}
\mathcal{M}(E) & := \frac{q^{3d+1}}{(\#E)^4} \sum_{t \in \mathbb{F}_q^*} \Bigg( \sum_{\substack{m \in \mathbb{F}_q^d:\\ |m|^2=t}} |\widehat E(m)|^2 \Bigg)^2 \\
&\lesssim \frac{q^{5d}}{(\#E)^4} \| \widehat E \|_{4}^4 \\
& \lesssim \frac{q^{5d}}{(\#E)^4} \left(q^{-4d}(\#E)^{4(1-s) } \right) \\
&=\frac{q^{d}}{(\#E)^{4s}}.
\end{align*}
Therefore, by \cite[Theorem 1.5]{iosevich},
\[
\#D(E) \gtrsim  q \wedge \frac{q}{\mathcal{M}(E) } \gtrsim q \wedge \frac{(\#E)^{4s}}{q^{d-1}}
\]
completing the proof.
\end{proof}

Combining the example  given below Conjecture \ref{maindistanceconj} concerning even $q$ and Theorem \ref{sphere0} we see that the assumption that $q$ is odd  cannot be removed from Theorem \ref{distancemain}. Since the only place we used this assumption was in the application of \cite[Theorem 1.5]{iosevich}, we also observe that the assumption  cannot be removed there. This was surely known to the authors of \cite{iosevich}, but does not appear to follow from anything written there.  Recall by Theorem \ref{sphere0} that the sphere of radius zero is not Salem in the sense of \cite{iosevich}.

Appealing to Theorem \ref{genericsalem} we get two corollaries which show that ``almost all'' sets satisfy  the finite fields distance set conjecture in a weak sense.

\begin{cor}
Let $q$ be odd.  There exists a constant $c>0$ such that the proportion of sets $E \subseteq \mathbb{F}_q^d$ of size at least $q^{d/2}\log \log q$ which satisfy $\#D(E) \geq cq$ tends to 1 as $q \to \infty$.
\end{cor}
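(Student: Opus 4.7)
The plan is to apply Theorem \ref{genericsalem} with $p=4$ to obtain a near-$(4,1/2)$-Salem bound for generic sets of the prescribed size, and then retrace the Fourier-analytic proof of Theorem \ref{distancemain} with this bound in hand. The key insight is that the factor $(\log\log q)^2$ arising from $(\#E)^2 \geq q^d (\log\log q)^2$ buys enough room to absorb a mildly divergent constant in the $L^4$ bound.

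First, fix any function $C(q)\to\infty$ with $C(q)^4 \leq \tfrac{1}{2}(\log\log q)^2$, for instance $C(q)=\tfrac{1}{2}(\log\log q)^{1/2}$. For each size $N \geq q^{d/2}\log\log q$, Theorem \ref{genericsalem} with $p=4$ yields that a uniformly random subset $X \subseteq \mathbb{F}_q^d$ of size $N$ satisfies
\[
\|\widehat X\|_4 \leq C(q)\, q^{-d}\sqrt{N}
\]
outside of a collection of proportion $O(1/C(q))$. Crucially, the implicit constant in this proportion depends only on $p$ and is uniform in $N$. Stratifying the collection of all subsets of $\mathbb{F}_q^d$ of size at least $q^{d/2}\log\log q$ by cardinality, this uniformity transfers: the overall proportion of ``bad'' sets is at most $\sup_N O(1/C(q)) = O(1/C(q)) \to 0$. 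It therefore suffices to prove that any $E$ with $\#E \geq q^{d/2}\log\log q$ and $\|\widehat E\|_4 \leq C(q)\, q^{-d}\sqrt{\#E}$ satisfies $\#D(E) \geq cq$ for some absolute $c>0$.

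For such $E$, following the proof of Theorem \ref{distancemain} and invoking Lemma \ref{sphericalaverages}, the finite-field Mattila integral satisfies
\[
\mathcal{M}(E) \lesssim \frac{q^{5d}}{(\#E)^4}\,\|\widehat E\|_4^4 \lesssim \frac{C(q)^4\, q^d}{(\#E)^2}.
\]
Then \cite[Theorem 1.5]{iosevich}, exactly as used in Theorem \ref{distancemain}, gives
\[
\#D(E) \gtrsim q \wedge \frac{(\#E)^2}{C(q)^4\, q^{d-1}} \geq q \wedge \frac{(\log\log q)^2}{C(q)^4}\, q \geq c\, q
\]
for some absolute $c>0$ and all sufficiently large $q$. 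The hypothesis $\#E \geq (\mathrm{const})\, q^{d/2}$ in \cite[Theorem 1.5]{iosevich} is automatic since $\log\log q \to \infty$. The main delicacy in this argument is the calibration of $C(q)$: it must diverge so that the Fourier bound holds with probability tending to $1$, yet $C(q)^4$ must be dominated by $(\log\log q)^2$ so that the distance set conclusion retains an absolute positive constant $c$. The slow growth of $(\log\log q)^{1/2}$ comfortably accommodates both constraints.
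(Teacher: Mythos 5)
Your proof is correct and is exactly the argument the paper intends (the paper leaves it implicit, simply "appealing to Theorem \ref{genericsalem}"): run Theorem \ref{genericsalem} with $p=4$ and a slowly diverging $C(q)$, then feed the resulting near-optimal $L^4$ bound through the Mattila-integral estimate from the proof of Theorem \ref{distancemain}. Your calibration $C(q)\approx(\log\log q)^{1/2}$ so that $C(q)^4\lesssim(\log\log q)^2$ is precisely the point of the $\log\log q$ factor in the size hypothesis, and your uniformity-in-$N$ observation is justified by the proof of Theorem \ref{genericsalem}, whose constants depend only on $p$.
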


\begin{cor}
Let $q$ be odd.  There exists a constant $C$ such that    the proportion of sets $E \subseteq \mathbb{F}_q^d$ of size at least $Cq^{d/2}$ which satisfy $\#D(E) \geq  \frac{q}{\log \log q}$ tends to 1 as $q \to \infty$.
\end{cor}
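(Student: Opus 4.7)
The plan is to combine the distance-set estimate Theorem \ref{distancemain} with the genericness estimate Theorem \ref{genericsalem} at $p=4$, along the same lines as the preceding corollary but pushing the $L^4$ Fourier control right up to the Salem threshold. Fix the constant $C$ so that Theorem \ref{distancemain} applies to any $E$ with $\#E\geq Cq^{d/2}$. For a uniformly random subset $X\subseteq\mathbb{F}_q^d$ of cardinality $n\in[Cq^{d/2},q^d/2]$, the proof of Theorem \ref{genericsalem} adapts essentially verbatim (replacing $\lfloor q^\alpha\rfloor$ by $n$ throughout) to give
\[
\mathbb{P}\!\left(\|\widehat X\|_4>C(q)\,q^{-d}\sqrt{n}\right)=O(1/C(q))
\]
for any unbounded non-decreasing $C(q)$. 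I would choose $C(q)=(\log\log q)^{1/4}$, so that the failure probability is $O((\log\log q)^{-1/4})\to 0$.

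On the good event, the bound $\|\widehat X\|_4\lesssim (\log\log q)^{1/4}q^{-d}\sqrt{n}$ is equivalent to $X$ being $(4,s_q)$-Salem for
\[
s_q=\frac{1}{2}-\frac{\log C(q)}{\tfrac{1}{2}\log n}\ \geq\ \frac{1}{2}-\frac{\log\log\log q}{2d\log q}.
\]
Feeding this into Theorem \ref{distancemain}, since $n\geq Cq^{d/2}$,
\[
\#D(X)\ \gtrsim\ \min\!\left\{q,\ q^{1-d}n^{4s_q}\right\}\ \gtrsim\ \min\!\left\{q,\ q^{1-2d(1/2-s_q)}\right\}\ =\ \min\!\left\{q,\ \frac{q}{\log\log q}\right\}\ =\ \frac{q}{\log\log q},
\]
where the arithmetic in the exponent comes from $q^{-2d(1/2-s_q)}=q^{-\log\log\log q/\log q}=1/\log\log q$. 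This estimate holds uniformly in $n\in[Cq^{d/2},q^d/2]$, and for larger $n$ the conclusion only strengthens, so the proportion of subsets of size at least $Cq^{d/2}$ satisfying the distance bound tends to $1$ as $q\to\infty$.

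The only real obstacle is bookkeeping: matching the choice of $C(q)$ so that $2\log C(q)/(d\log q)$ produces exactly the factor $\log\log q$ in the final distance bound, and verifying that Theorem \ref{genericsalem} extends uniformly in the cardinality $n$ rather than just for $n=\lfloor q^\alpha\rfloor$. The latter is immediate from inspection of the proof, since the cardinality appears only through $n^{p/2}$ in the moment estimate.
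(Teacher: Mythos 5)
Your argument is correct and is exactly the (implicit) proof the paper intends: the corollary is stated as a direct consequence of Theorem \ref{genericsalem} applied at $p=4$ with $C(q)=(\log\log q)^{1/4}$ fed into Theorem \ref{distancemain}, together with the routine observation that the moment estimate in Theorem \ref{genericsalem} is uniform in the cardinality. The only blemish is a factor-of-two slip in your intermediate formula for $s_q$ (the denominator should be $\log n$, not $\tfrac12\log n$); the subsequent inequality $s_q\geq \tfrac12-\tfrac{\log\log\log q}{2d\log q}$ and the final bound $\#D(X)\gtrsim q/\log\log q$ are nonetheless correct.
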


The following is another direct corollary of Theorem \ref{distancemain}, which is useful for comparing our result to the state of the art. 

\begin{cor} \label{improveee}
Let $\beta>d/2$ and  $q$ be odd.  If $E \subseteq \mathbb{F}_q^d$ satisfies $\# E \approx q^\beta$ and is $(4,d/(4\beta))$-Salem, then $\# D(E) \gtrsim q.$  
\end{cor}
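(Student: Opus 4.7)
The plan is to simply substitute the given data into Theorem \ref{distancemain} and check that the second term in the $\min$ matches (or exceeds) the first. Since $\beta>d/2$, the hypothesis $\#E \approx q^\beta$ guarantees $\#E \geq Cq^{d/2}$ for any constant $C$ once $q$ is sufficiently large, so the standing hypothesis of Theorem \ref{distancemain} is met. Moreover, $E$ is assumed $(4,s)$-Salem with $s=d/(4\beta)$, so Theorem \ref{distancemain} is directly applicable.

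Next I would compute the second term in the bound $\#D(E) \gtrsim \min\{q,\, q^{1-d}(\#E)^{4s}\}$ under the given choice of $s$. Using $\#E \approx q^\beta$,
\[
q^{1-d}(\#E)^{4s} \approx q^{1-d}\cdot q^{4\beta s} = q^{1-d}\cdot q^{4\beta\cdot d/(4\beta)} = q^{1-d}\cdot q^{d} = q.
\]
Therefore the two quantities inside the $\min$ are comparable, and Theorem \ref{distancemain} yields $\#D(E) \gtrsim q$, as required.

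There is really no obstacle to overcome here: the corollary is merely a reformulation of Theorem \ref{distancemain} that makes explicit which $L^4$ exponent $s$ is the critical threshold needed to reach the full distance set lower bound, namely $s = d/(4\beta)$. The only mild subtlety is that $\beta>d/2$ is precisely what ensures $s < 1/2$ (so the hypothesis is strictly weaker than being $(4,1/2)$-Salem) while still guaranteeing $\#E \geq Cq^{d/2}$, and that the asymptotic $\approx$ in $\#E \approx q^\beta$ absorbs only constants, so the exponent comparison above is genuinely sharp up to the implicit constants hidden in $\lesssim$ and $\gtrsim$.
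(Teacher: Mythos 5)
Your proof is correct and follows exactly the route the paper intends: Corollary \ref{improveee} is presented there as a direct consequence of Theorem \ref{distancemain}, and your substitution of $s=d/(4\beta)$ and $\#E\approx q^\beta$ into the bound $\min\{q,\,q^{1-d}(\#E)^{4s}\}$ to recover $q$ is precisely the intended verification. No gap, and no divergence from the paper's approach.
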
 

For example, to the best of our knowledge the state of the art in dimension 2 is due to \cite{state} where they prove that if $q$ is prime and $E \subseteq\mathbb{F}_q^2$ with  $\# E \gtrsim q^{5/4}$, then $\# D(E) \gtrsim q.$  (In fact they are able to say something stronger concerning the \emph{pinned distance set}.)   Applying Corollary \ref{improveee}, we can improve the exponent 5/4 in $\mathbb{F}_q^2$ whenever $E$ is $(4,s)$-Salem for $s>2/5$.

\subsection{Counting $k$-simplices}

Another counting problem where Fourier analytic techniques bear fruit is that of counting simplices determined by a set $E$.  More precisely, and following \cite{simplices}, for $E \subseteq \mathbb{F}_q^d$ and $1 \leq k \leq d$, write $T_k^d(E) $ to denote the set of congruence classes of $k$-simplices determined by $E$.  Here two $k$-simplices in $E$ with vertices $(x_1, \dots, x_{k+1})$ and  $(y_1, \dots, y_{k+1})$ are congruent if  there exists $A \in \mathbb{F}_q^d \rtimes O_d(\mathbb{F}_q)$ such that $A(x_i) = y_i $ for all $i = 1, \dots, k+1$.  Then 
\[
 \# T_k^d(E) \leq  \# T_k^d(\mathbb{F}_q^d) \approx   q^{\binom{k+1}{2}}
\]
and so one natural question to ask is how big must $E$ be in order to ensure $ \# T_k^d(E) \gtrsim   q^{\binom{k+1}{2}}$.  This problem was investigated in \cite{simplices} and we can use their work combined with our $L^p$ averages approach to obtain the following.

\begin{thm} \label{simplicesmain}
Let  $1 \leq k \leq d$ and $q$ be odd.  If   $E$ is $(4,s)$-Salem, then
\[
 \#  T_k^d(E)  \gtrsim \min\left\{ q^{\binom{k+1}{2}}  , \,  (\#E)^{k-1+4s} q^{\binom{k+1}{2}-kd}\right\}.
\]
In particular, if  $\# E \gtrsim q^{\frac{kd}{k+1}}$ and $E$ is $(4,1/2)$-Salem, then $E$ contains a positive proportion of all possible $k$-simplices up to rotation and translation, that is, $ \#  T_k^d(E) \gtrsim q^{\binom{k+1}{2}}$.
\end{thm}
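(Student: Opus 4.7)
The plan is to mirror the proof of Theorem \ref{distancemain}: combine a simplex-counting lower bound from \cite{simplices} (playing the role of the Iosevich--Rudnev distance result there) with a control on spherical averages coming from Lemma \ref{sphericalaverages} and the $(4,s)$-Salem hypothesis. Specifically, \cite{simplices} provides a bound of the schematic form
\[
\#T_k^d(E) \gtrsim q^{\binom{k+1}{2}} \wedge \frac{q^{\binom{k+1}{2}}}{\mathcal{M}_k(E)},
\]
where $\mathcal{M}_k(E)$ is a normalised sum built from the spherical $L^2$ averages $\sum_{|m|^2=t} |\widehat E(m)|^2$, with one factor associated to each edge of the simplex but with the vertices coupled. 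The target reduction is therefore
\[
\mathcal{M}_k(E) \lesssim \frac{q^{kd - \binom{k+1}{2}}}{(\#E)^{k-1+4s}},
\]
which, substituted into the previous display, yields the first claim of the theorem.

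To establish the bound on $\mathcal{M}_k(E)$, I would peel off one distinguished edge of the simplex and apply Cauchy--Schwarz in the remaining $k-1$ edges, replacing each by a Plancherel factor $\|\widehat E\|_2^2 \approx q^{-d}\#E$. The distinguished edge then contributes a sum of the form $\sum_t \big(\sum_{|m|^2 = t} |\widehat E(m)|^2\big)^2$, which by Lemma \ref{sphericalaverages} is bounded by $q^{2d-1}\|\widehat E\|_4^4$; under the $(4,s)$-Salem hypothesis this is in turn $\lesssim q^{2d-1} q^{-4d}(\#E)^{4(1-s)}$. The claimed exponent $k-1+4s$ then splits naturally: $k-1$ comes from the Plancherel edges and $4s$ from the single $L^4$ edge. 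A careful tracking of the $q$-exponents (the $q^{kd}$ in the numerator reflecting the dimension of the underlying simplex configuration space) gives the displayed bound on $\mathcal{M}_k(E)$.

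The ``in particular'' claim is then a routine algebraic check. If $\#E \gtrsim q^{kd/(k+1)}$ and $s = 1/2$, then
\[
(\#E)^{k+1} q^{\binom{k+1}{2} - kd} \gtrsim q^{kd + \binom{k+1}{2} - kd} = q^{\binom{k+1}{2}},
\]
so the second term inside the minimum dominates the first and we conclude $\#T_k^d(E) \gtrsim q^{\binom{k+1}{2}}$. The main obstacle is the Cauchy--Schwarz/peeling step: $\mathcal{M}_k(E)$ does not factor over the edges of the simplex because the edges share vertices, so one must choose a specific vertex-by-vertex order for applying Cauchy--Schwarz (treating the last vertex via Lemma \ref{sphericalaverages} and the earlier vertices via Plancherel) and check that the exponents balance. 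The precise structure of $\mathcal{M}_k(E)$ is dictated by \cite{simplices}, and the technical heart of the argument is matching that structure to our $L^4$ estimate.
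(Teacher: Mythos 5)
Your overall strategy is the right one --- feed an $L^4$ bound on spherical averages (via Lemma \ref{sphericalaverages} and the $(4,s)$-Salem hypothesis) into the counting machinery of \cite{simplices} --- and your final algebraic check of the ``in particular'' clause is correct. But the central step is left as a gap, and the structure you guess for the input from \cite{simplices} is not the right one. There is no Mattila-type quantity $\mathcal{M}_k(E)$ with ``one factor per edge'' to be peeled apart by Cauchy--Schwarz. What \cite[Theorem 1.5]{simplices} actually delivers (and what the paper uses) is the explicit bound
\[
 \#  T_k^d(E) \geq \frac{(\#E)^{2k+2}q^{\binom{k+1}{2}}}{C_1 (\#E)^{2k+2}+C_2 (\#E)^{k-1}q^{(k+2)d+1} \mathcal{S}(E)},
 \qquad
 \mathcal{S}(E) := \sum_{\xi \neq 0} |\widehat E(\xi)|^2 \sum_{\substack{\eta \neq 0 \\ |\eta|^2 = | \xi |^2}} |\widehat E(\eta)|^2,
\]
in which only a \emph{single} spherical pairing appears and the factor $(\#E)^{k-1}$ is already present; the other vertices have been handled inside \cite{simplices} by a group-action argument, not by Plancherel applied edge by edge. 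The only new work is to regroup $\mathcal{S}(E)$ by level sets of $|\cdot|^2$, so that it becomes exactly $\bigl(\sum_{|m|^2=0,\,m\neq 0}|\widehat E(m)|^2\bigr)^2+\sum_{t\in\mathbb{F}_q^*}\bigl(\sum_{|m|^2=t}|\widehat E(m)|^2\bigr)^2$, and then apply Lemma \ref{sphericalaverages} to get $\mathcal{S}(E)\lesssim q^{2d-1}\|\widehat E\|_4^4\lesssim q^{-2d-1}(\#E)^{4(1-s)}$; substituting gives the stated minimum directly. The Cauchy--Schwarz/peeling step you flag as the ``main obstacle'' is therefore not needed, and as described it would amount to re-deriving the counting framework of \cite{simplices} from scratch --- note also that a $k$-simplex has $\binom{k+1}{2}$ edges, not $k$, so the heuristic ``$k-1$ Plancherel edges plus one $L^4$ edge'' does not match the combinatorics; the exponent $k-1$ comes from the form of the cited theorem, not from an edge count.

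A secondary issue: your target bound $\mathcal{M}_k(E) \lesssim q^{kd-\binom{k+1}{2}}/(\#E)^{k-1+4s}$ does not substitute into your schematic display to give the claimed conclusion --- it produces $(\#E)^{k-1+4s}q^{2\binom{k+1}{2}-kd}$ rather than $(\#E)^{k-1+4s}q^{\binom{k+1}{2}-kd}$. Since $\mathcal{M}_k$ is only defined schematically this is a normalisation slip rather than a fatal error, but it signals that the exponent bookkeeping you defer to ``careful tracking'' has not actually been done.
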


\begin{proof}
Following the proof of \cite[Theorem 1.5]{simplices}, one obtains the estimate
\[
 \#  T_k^d(E) \geq \frac{(\#E)^{2k+2}q^{\binom{k+1}{2}}}{C_1 (\#E)^{2k+2}+C_2 (\#E)^{k-1}q^{(k+2)d+1} \mathcal{S}(E)}
\]
for   constants $C_1, C_2 \geq 1$ and where
\[
\mathcal{S}(E) := \sum_{\xi \neq 0} |\widehat E(\xi)|^2 \sum_{\eta \neq 0: |\eta|^2 = | \xi |^2} |\widehat E(\eta)|^2 .
\]
In \cite{simplices} one proceeds by using Plancherel's Theorem \eqref{plancherel}  to estimate 
\[
\mathcal{S}(E) \leq \sum_{\xi \neq 0} |\widehat E(\xi)|^2 \sum_{\eta \neq 0 } |\widehat E(\eta)|^2 \leq q^{-2d} (\#E)^2
\]
which gives
\[
 \#  T_k^d(E) \geq \frac{(\#E)^{2k+2}q^{\binom{k+1}{2}}}{C_1 (\#E)^{2k+2}+C_2 (\#E)^{k+1}q^{kd+1} }.
\]
 However, grouping the terms differently and applying  Lemma \ref{sphericalaverages} and our $(4,s)$-Salem assumption, we get
\begin{align*}
\mathcal{S}(E)  &=  \Bigg( \sum_{\substack{m \in \mathbb{F}_q^d\setminus\{0\}:\\ |m|^2=0}} |\widehat E(m)|^2 \Bigg)^2+  \sum_{t \in \mathbb{F}_q^*} \Bigg( \sum_{\substack{m \in \mathbb{F}_q^d:\\ |m|^2=t}} |\widehat E(m)|^2 \Bigg)^2   \\
& \lesssim   q^{2d-1}  \| \widehat E \|_4^4 \\
&\lesssim  q^{-2d-1}  (\#E)^{4(1-s)}
\end{align*}
and so
\begin{align*}
 \# T_k^d(E) & \gtrsim \frac{(\#E)^{2k+2}q^{\binom{k+1}{2}}}{ (\#E)^{2k+2}+  (\#E)^{k-1+4(1-s)}q^{kd}} \\
&\gtrsim  \min\left\{ q^{\binom{k+1}{2}}  , \,  (\#E)^{k-1+4s} q^{\binom{k+1}{2}-kd}\right\},
\end{align*}
as required. 
\end{proof}

For sets which fail to have optimal $L^4$ Fourier bounds, we need to make a stronger assumption about their cardinality to get the optimal conclusion.  This is the content of the next corollary, recalling that every set is $(4,1/4)$-Salem by Corollary \ref{p1/p}.

\begin{cor}
Let  $1 \leq k \leq d$, $q$ be odd,  and  $s \in [1/4,1/2)$.  Suppose  $E \subseteq \mathbb{F}_q^d$ satisfies $\#E \gtrsim q^{\frac{kd}{k-1+4s}}$  and that $E$ is $(4,s)$-Salem.  Then $E$ contains a positive proportion of all possible $k$-simplices up to rotation and translation, that is, $ \#  T_k^d(E) \gtrsim q^{\binom{k+1}{2}}$.
\end{cor}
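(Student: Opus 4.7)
The plan is to derive this corollary as a direct consequence of Theorem \ref{simplicesmain}. That theorem gives the two-term lower bound
\[
\# T_k^d(E) \gtrsim \min\left\{ q^{\binom{k+1}{2}}, \, (\#E)^{k-1+4s} q^{\binom{k+1}{2}-kd}\right\},
\]
so to conclude $\# T_k^d(E) \gtrsim q^{\binom{k+1}{2}}$ it suffices to verify that under the hypothesis $\#E \gtrsim q^{\frac{kd}{k-1+4s}}$ the second term in the minimum is at least a constant multiple of the first. This is a purely algebraic manipulation: I would equate the two terms and solve for the critical exponent of $\#E$.

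Concretely, first I would observe that the inequality
\[
(\#E)^{k-1+4s} q^{\binom{k+1}{2}-kd} \gtrsim q^{\binom{k+1}{2}}
\]
is equivalent, after cancelling the common factor $q^{\binom{k+1}{2}}$ and rearranging, to $(\#E)^{k-1+4s} \gtrsim q^{kd}$. Since $s \in [1/4, 1/2)$ we have $k - 1 + 4s \geq k \geq 1 > 0$, so the exponent is strictly positive and we may take $(k-1+4s)$-th roots, obtaining $\#E \gtrsim q^{kd/(k-1+4s)}$. This is precisely the assumption of the corollary, which therefore yields the desired bound on $T_k^d(E)$.

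There is essentially no obstacle here beyond bookkeeping: the $(4,s)$-Salem hypothesis is exactly what allows the invocation of Theorem \ref{simplicesmain}, the oddness of $q$ is inherited from that theorem, and the range $s \in [1/4, 1/2)$ keeps us in the regime where Corollary \ref{p1/p} guarantees the assumption is non-vacuous for \emph{any} set while also ensuring we are weaker than the optimal $(4, 1/2)$ case already handled in Theorem \ref{simplicesmain}. The only mildly delicate point worth flagging is that as $s \to 1/2$ the required cardinality threshold $q^{kd/(k-1+4s)}$ decreases to $q^{kd/(k+1)}$, recovering the exponent in Theorem \ref{simplicesmain}, while as $s \to 1/4$ it increases to $q^{kd/k}=q^d$, which correctly reflects that without any Fourier information beyond the trivial $(4,1/4)$-Salem estimate we essentially need $E$ to fill the ambient space. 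Writing this explicitly after the algebraic step gives a clean, self-contained proof.
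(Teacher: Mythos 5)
Your proposal is correct and matches the paper's intended (unwritten) derivation exactly: the corollary is stated as an immediate consequence of Theorem \ref{simplicesmain}, obtained by checking that the cardinality hypothesis forces the second term of the minimum to dominate, precisely the algebraic rearrangement you carry out. Your sanity checks at the endpoints $s=1/4$ and $s\to 1/2$ are also a nice confirmation that the threshold exponent interpolates as expected.
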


It is an open problem to determine the smallest $\alpha$ such that   $\#E \gtrsim q^{\alpha}$  guarantees that  $ \#  T_k^d(E) \gtrsim q^{\binom{k+1}{2}}$.  In  \cite[Theorem 1.11]{simplices} it was shown that the optimal $\alpha$ is at least $k-1+1/k$.  An interesting feature of our results  is that we can provide an exponent which beats this bound for sets which are $(4,s)$-Salem for $s$ large enough (we do not  get  the analogous statement  for the distance problem).  For example, setting  $d=k$ we get a better exponent than  $k-1+1/k$ provided
\[
s> \frac{2k^2-2k+1}{4(k^2-k+1)}
\]
noting that the right hand side is strictly smaller than $1/2$ for all $k \geq 1$.  

Once again by appealing to Theorem \ref{genericsalem}, we can  show that ``almost all'' large enough sets contain a positive proportion of all possible $k$-simplices up to rotation and translation. 

\begin{cor}
Let  $1 \leq k \leq d$ and $q$ be odd.  There exists a constant $c>0$ such that the proportion of sets $E \subseteq \mathbb{F}_q^d$ of size at least  $ q^{\frac{kd}{k+1}}\log \log q$ which satisfy $  \#  T_k^d(E) \geq c q^{\binom{k+1}{2}}$  tends to 1 as $q \to \infty$.
\end{cor}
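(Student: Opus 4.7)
The strategy is to couple the generic $L^4$ estimate from Theorem \ref{genericsalem} with the deterministic counting bound that drives Theorem \ref{simplicesmain}.

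First I would apply Theorem \ref{genericsalem} with $p = 4$ and the non-decreasing function $C(q) := (\log\log q)^{(k+1)/4}$. This yields, for every $\alpha \in (0,d)$ and every uniformly chosen subset $X \subseteq \mathbb{F}_q^d$ of size $\lfloor q^\alpha \rfloor$,
\[
\mathbb{P}\Big( \| \widehat X \|_4 > (\log\log q)^{(k+1)/4}\, q^{-d} q^{\alpha/2}\Big) = O\big((\log\log q)^{-(k+1)/4}\big) \to 0 .
\]
Inspection of the proof of Theorem \ref{genericsalem} confirms that the constant implicit in the $O$ depends only on $p=4$ (through the $p$th-moment constant $c$), and in particular is uniform in $\alpha$.

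Next I would rerun the proof of Theorem \ref{simplicesmain} with this slightly weaker Fourier bound in place of the $(4,1/2)$-Salem hypothesis. Lemma \ref{sphericalaverages} now gives
\[
\mathcal{S}(E) \lesssim q^{2d-1}\|\widehat E\|_4^4 \lesssim (\log\log q)^{k+1} q^{-2d-1} (\#E)^2,
\]
and substituting this into the bound from \cite[Theorem 1.5]{simplices} used inside the proof of Theorem \ref{simplicesmain} produces
\[
\#T_k^d(E) \gtrsim \frac{(\#E)^{2k+2} q^{\binom{k+1}{2}}}{(\#E)^{2k+2} + (\log\log q)^{k+1} (\#E)^{k+1} q^{kd}}.
\]
Under the size hypothesis $\#E \geq q^{kd/(k+1)}\log\log q$ one has $(\#E)^{k+1} \geq (\log\log q)^{k+1} q^{kd}$, so the second term of the denominator is dominated by the first and $\#T_k^d(E) \geq c\, q^{\binom{k+1}{2}}$ for some absolute $c > 0$.

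Finally, to pass from a random set of a fixed size to the proportion statement, I would note that the bad-event probability from the first step is $O((\log\log q)^{-(k+1)/4})$ \emph{uniformly in} $N = \lfloor q^\alpha \rfloor$; averaging over all admissible sizes $N \geq q^{kd/(k+1)}\log\log q$ (weighted by the number of subsets of that size) therefore yields the same vanishing bound on the overall proportion of bad sets. The only non-routine point is this uniformity in $\alpha$, which is the main obstacle to verify carefully. For the subrange where $N$ is very close to $q^d$, where Proposition \ref{alwaysbound} becomes too weak for the argument behind Theorem \ref{genericsalem}, I would replace $E$ by its complement $E^c$ and use $\widehat E(z) = -\widehat{E^c}(z)$ for $z \neq 0$ together with $\sqrt{\#E^c} \leq \sqrt{\#E}$ to reduce back to the well-behaved regime.
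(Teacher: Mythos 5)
Your argument is correct and is exactly the route the paper intends (the corollary is stated as a direct consequence of Theorem \ref{genericsalem} combined with the proof of Theorem \ref{simplicesmain}): take $C(q)=(\log\log q)^{(k+1)/4}$, feed the resulting generic $L^4$ bound into the simplex count via Lemma \ref{sphericalaverages}, and observe that the moment estimates are uniform in the set size so the bad proportion can be averaged over all admissible cardinalities. Your extra care about sizes $N$ near $q^d$ is sound but not strictly necessary, since Markov's inequality alone already yields $\rho_q = O(1/C(q))$ without the lower bound from Proposition \ref{alwaysbound}.
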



\end{document}